\documentclass[11pt,a4paper]{article}
\usepackage[utf8]{inputenc}
\usepackage[T1]{fontenc}
\usepackage[french]{babel}
\usepackage{amsmath,amssymb,amsthm}
\usepackage{geometry}
\usepackage{xcolor}
\usepackage{hyperref}
\usepackage{enumitem}

\hypersetup{
    colorlinks=true,
    linkcolor=blue,
    citecolor=red,
    urlcolor=black
}

\newcommand{\owedge}{\mathbin{\bigcirc\hspace{-9.5pt}\wedge\;}}

\newtheorem{thm}{Theorem}[section]
\newtheorem{prop}[thm]{Proposition}
\newtheorem{lem}[thm]{Lemma}
\newtheorem{defi}[thm]{Definition}
\newtheorem{rem}[thm]{Remark}
\newtheorem{cor}[thm]{Corollary}
\newtheorem{ex}[thm]{Example}

\newcommand{\doi}[1]{%
    \href{https://doi.org/#1}{\texttt{#1}}%
}

\title{
\textbf{Ricci solitons, Yamabe's almost-$\theta$ solitons, and symmetries of finite-order tensors of a vector field on Riemannian varieties with anisotropic curvature of rank one}
}

\author{
  \textbf{Abdou Bousso}\thanks{Département de Mathématiques et Informatique,
Université Cheikh Anta Diop de Dakar, Sénégal.\\
E-mail : \href{mailto:abdoukskbousso@gmail.com}{\texttt{abdoukskbousso@gmail.com}}}
  \quad and \quad 
  \textbf{Ameth Ndiaye}\thanks{
Département de Mathématiques, FASTEF,
Université Cheikh Anta Diop de Dakar, Sénégal.\\ E-mail : \href{mailto:ameth1.ndiaye@ucad.edu.sn}{\texttt{ameth1.ndiaye@ucad.edu.sn}}}
}
\date{}
\addto\captionsfrench{}
\begin{document}

\maketitle
\renewcommand{\abstractname}{Abstract}
\begin{abstract}
We study the iterated action of the Lie derivative on the curvature tensor of a Riemannian manifold of quasi-constant curvature, where the Riemann tensor is expressed via the Kulkarni–Nomizu product as $R = \lambda (\xi^\flat\otimes\xi^\flat)\owedge g$. First, we examine the conditions under which such a manifold is a Ricci soliton and demonstrate that the Ricci soliton property implies the almost-$\theta$ Yamabe soliton property. We also show that if the manifold is a Ricci soliton whose associated vector field is a symmetry of the Ricci tensor of a fixed order $k$, the problem reduces to solving a partial differential equation of order $k+1$ relating its flow to its $k$-th order Lie derivative with respect to the metric. The paper concludes by demonstrating that the geometric problem whether establishing the relation $\mathcal{L}_X^kR=R$ for a fixed integer $k$, determining the minimum order of $R$ with respect to $X$ for $X$ to be a Lie symmetry of the curvature tensor of order $k$, or satisfying the relation $\mathcal{L}_X^{k+1}R=f\mathcal{L}_X^kR$ (where $f$ is a continuous function on the manifold) is equivalent to the differential problem induced by the operator $D_X = X + 6\varphi + 2a$, under the assumption that $X$ is a conformal vector field ($\mathcal{L}_Xg=2\varphi g$) whose infinitesimal flow preserves the line distribution $\mathcal{D}=\operatorname{Span}\{\xi\}$ (with $[X,\xi]=a\xi$ and $a\in\mathbb{R}$).
\medskip

\noindent
\textbf{Keywords:}
Riemann tensor, Kulkarni–Nomizu product, anisotropic curvature, Lie derivative, Ricci soliton, almost Ricci soliton, $\theta$-almost Yamabe soliton, finite-order symmetry, conformal vector field, differential equation.
\medskip

\noindent
\textbf{MSC 2020 :}
53B20, 53C21, 53C25, 53C30.

\end{abstract}

\section{Introduction}

A deep understanding of Riemannian and semi-Riemannian geometry relies fundamentally on a detailed analysis of the Riemann tensor and its intrinsic symmetries \cite{doCarmo, Lee, ONeill}. Historically, the investigation of manifolds subject to specific curvature constraints—whether involving non-positive curvature or models with quasi-constant curvature—has revealed particularly rich topological and geometric properties \cite{BallmannGromovSchroeder, Eberlein, Heintze, Karcher}. Building on this foundation, the global characterization of these spaces has been significantly deepened through the study of Einstein manifolds and their metric deformations \cite{Besse, Thurston1997}.

At the heart of this work lie Riemannian manifolds endowed with a singular curvature structure, which we term "rank-one anisotropic." More precisely, we assume that the curvature tensor is expressed via the Kulkarni-Nomizu product in the following form:
\begin{equation}
    R = \lambda (\xi^\flat\otimes\xi^\flat)\owedge g, \label{eq1}
\end{equation}
where $\lambda$ denotes a differentiable function and $\xi$ a smooth vector field on the manifold. This formalism widely utilized in the analysis of quasi-Einstein manifolds and the study of Riemannian submersions \cite{Derdzinski} provides a privileged algebraic and geometric framework for assessing the influence of the one-dimensional distribution $\mathcal{D} = \operatorname{Span}\{\xi\}$ on the ambient geometry.

Concurrently, the theory of geometric flows, driven by seminal work on Ricci and Yamabe flows \cite{Chow1992, Hamilton}, has highlighted the pivotal role of solitons. These self-similar metrics are currently experiencing significant growth and appear in numerous variants (such as Ricci-Bourguignon solitons, Ricci-Yamabe solitons, and $(h,\eta)$-almost solitons). Indeed, recent research has established classification and existence theorems for these structures on hyperbolic spaces, Lie groups, and three-dimensional homogeneous spaces \cite{BoussoNdiayeJDSGT2025, BoussoNdiaye2026Hn, BoussoNdiayeH2R2025, BoussoNdiayeSol3, DiopBoussoNdiayeMandal2026}.

The central aim of this article is to explore the profound interplay between the anisotropic curvature condition of equation \eqref{eq1} and the existence of these generalized solitonic configurations. Drawing on the powerful tools of tensor calculus and the Lie derivative \cite{Yano}, we investigate the metric symmetries generated by a conformal vector field $X$ that preserves the characteristic distribution $\mathcal{D}$.

In this paper, we demonstrate how higher-order symmetries translate into the solution of complex systems of partial differential equations. Finally, we highlight the stationary and asymptotic behavior of iterations of the Lie derivative of the curvature tensor, thereby offering new insights into the geometric rigidity of these spaces.
\section{Preliminaries}
\addcontentsline{toc}{section}{Preliminaries}
Let $(M,g)$ be a Riemannian manifold of dimension $n\geq2$.
We denote by $\nabla$ the Levi-Civita connection of $g$.

We adopt the convention
\begin{equation}\label{Rr}
    R(X,Y)Z=\nabla_X\nabla_YZ
-\nabla_Y\nabla_XZ
-\nabla_{[X,Y]}Z.
\end{equation}
The $(0,4)$-curvature tensor is
$R(X,Y,Z,W)=g(R(X,Y)Z,W).$

Let $h$ and $k$ be two symmetric tensors of type $(0,2)$.

\begin{defi}
The Kulkarni--Nomizu product of $h$ and $k$ is the tensor
$h\owedge k$ defined by
\begin{align}\label{produit}
(h\owedge k)(X,Y,Z,W)
=&h(X,W)k(Y,Z)+h(Y,Z)k(X,W)
\nonumber\\
&-h(X,Z)k(Y,W)-h(Y,W)k(X,Z).
\end{align}
In coordinates,

\begin{equation}
\label{eq:KN}
(h\owedge k)_{ijkl}
=h_{il}k_{jk}+h_{jk}k_{il}-h_{ik}k_{jl}-h_{jl}k_{ik}.
\end{equation}
\end{defi}

When $h=k=g$, we obtain
$$(g\owedge g)(X,Y,Z,W)=2\Big(g(X,W)g(Y,Z)-g(X,Z)g(Y,W)\Big).$$
\begin{defi}
    Let $X\in\mathfrak{X}(M)$. For each point $p\in M$, an integral curve
of $X$ passing through $p$ is a curve $\gamma_p:I_p\longrightarrow M$
satisfying
\begin{equation}
    \gamma_p'(t)
    =
    X_{\gamma_p(t)} \quad\text{and}\quad  \gamma_p(0)=p.
    \label{eq:trajectoire}
\end{equation}

The local flow is defined by $\Phi_t(p)=\gamma_p(t).$

It satisfies
\begin{equation}
    \Phi_0=\operatorname{Id} \quad\text{and}\quad  \frac{d}{dt}\Phi_t(p)
    =  X_{\Phi_t(p)}.
    \label{eq:flow-zero}
\end{equation}
\end{defi}
\begin{defi}
    Let $X\in\mathfrak{X}(M)$. The Lie derivative of a tensor $\mathcal{W}$ with respect to $X$ is defined
by
\begin{equation}
    \mathcal{L}_X\mathcal{W}
    =
    \left.
    \frac{d}{dt}
    \right|_{t=0}
    \varphi_t^*\mathcal{W},
    \label{eq:def-lie}
\end{equation}
where $\varphi_t$ denotes the local flow of $X$.

We set
\begin{equation}
    \mathcal{L}_X^0\mathcal{W}=\mathcal{W}
    \label{eq:lie-zero}
\end{equation}
and, for $k\geq1$,
\begin{equation}
    \mathcal{L}_X^k\mathcal{W}= \mathcal{L}_X\left( \mathcal{L}_X^{k-1}\mathcal{W} \right).\label{eq:lie-k}
\end{equation}

\end{defi}

\begin{defi}
Let $X\in\mathfrak X(M)$ and $k\geq1$.
We say that \begin{itemize}[label=$\bullet$]
 \item $X$ is a \textbf{metric symmetry of order $k$} if $\mathcal{L}_X^kg=0;$
    \item $X$ is a \textbf{curvature symmetry of order $k$} if $\mathcal{L}_X^kR=0;$
 \item $X$ is a \textbf{Ricci tensor symmetry of order $k$} if $\mathcal{L}_X^k\operatorname{Ric}=0.$
\end{itemize}
\end{defi}
\begin{defi}
Let $X\in\mathfrak X(M)$ and $k\geq1$.
We say that \begin{itemize}[label=$\star$]
 \item $X$ is a \textbf{Lie symmetry of the metric of order $k$} if $\mathcal{L}_X^kg=0\quad \text{and}\quad  \mathcal{L}_X^{k-1}g\neq0;$
    \item $X$ is a \textbf{Lie symmetry of the curvature of order $k$} if $\mathcal{L}_X^kR=0\quad \text{and} \quad\mathcal{L}_X^{k-1}R\neq0;$
 \item $X$ is a \textbf{Lie symmetry of the Ricci tensor of order $k$} if $\mathcal{L}_X^k\operatorname{Ric}=0\quad \text{and} \quad\mathcal{L}_X^{k-1}\operatorname{Ric}\neq0.$
\end{itemize}
\end{defi}
\begin{defi}
Let $\mathcal{W}$ be a tensor of type $(r,s)$. When the set $\{k\geq1:\mathcal{L}_X^k\mathcal{W}=0\}$ is non-empty, its smallest element is called the
\emph{order of nilpotence of $\mathcal{W}$ with respect to $X$}.
\end{defi}
We will primarily use the fact that the Kulkarni–Nomizu product
is bilinear.
\begin{rem}
    If $h$ and $k$ are symmetric, then $h\owedge k$ possesses the
algebraic symmetries of the curvature tensor. Indeed, antisymmetry with respect to the first two variables and the
last two variables follows directly from
\eqref{eq:KN}. Symmetry under the exchange of the two pairs also
follows from the same formula. Finally, $(h\owedge k)_{ijkl}
+
(h\owedge k)_{iklj}
+
(h\owedge k)_{iljk}
=0,$

after grouping terms. Thus, $h\owedge k$ satisfies the
first Bianchi identity.
\end{rem}
\begin{lem}
For any symmetric tensors $h, k$ and any vector field $X$,
$\mathcal{L}_X(h\owedge k)
=
(\mathcal{L}_Xh)\owedge k
+
h\owedge(\mathcal{L}_Xk).
$
\end{lem}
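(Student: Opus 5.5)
The plan is to use the flow definition \eqref{eq:def-lie} together with the bilinearity of the Kulkarni--Nomizu product, so the statement becomes a Leibniz rule.

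\textbf{Step 1: pullback commutes with the product.} Let $\varphi_t$ be the local flow of $X$. We first check that
\[
\varphi_t^*(h\owedge k)=(\varphi_t^*h)\owedge(\varphi_t^*k).
\]
Evaluate the left side on $X_1,\dots,X_4$ at $p$. It is $(h\owedge k)_{\varphi_t(p)}(d\varphi_t X_1,\dots,d\varphi_t X_4)$. Expanding \eqref{produit}, each of its four terms is a product $h(d\varphi_t\cdot,d\varphi_t\cdot)\,k(d\varphi_t\cdot,d\varphi_t\cdot)$. Each such factor is exactly $(\varphi_t^*h)(\cdot,\cdot)$ or $(\varphi_t^*k)(\cdot,\cdot)$, which gives the identity. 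Note that $\varphi_t^*h$ and $\varphi_t^*k$ are still symmetric, so the right-hand product is defined.

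\textbf{Step 2: differentiate at $t=0$.} At each point $p$, the map $t\mapsto(\varphi_t^*h)_p$ is a smooth curve in the finite-dimensional space of symmetric bilinear forms on $T_pM$, and likewise for $k$. The product $\owedge$ is a fixed bilinear map on these spaces, so the ordinary product rule for bilinear maps applies:
\[
\frac{d}{dt}\Big|_{t=0}(\varphi_t^*h)\owedge(\varphi_t^*k)=\Big(\frac{d}{dt}\Big|_{0}\varphi_t^*h\Big)\owedge k+h\owedge\Big(\frac{d}{dt}\Big|_{0}\varphi_t^*k\Big).
\]
Here we used $\varphi_0=\operatorname{Id}$ from \eqref{eq:flow-zero}. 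By \eqref{eq:def-lie}, this is $(\mathcal{L}_Xh)\owedge k+h\owedge(\mathcal{L}_Xk)$.

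\textbf{Alternative route.} One can instead note that $h\owedge k$ is a sum of four contracted tensor products of $h$ and $k$ with permuted arguments. The Lie derivative is a derivation with respect to tensor products and commutes with permutations of arguments, so applying it term by term in \eqref{eq:KN} gives the same result.

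\textbf{Main obstacle.} The only delicate point is that $\varphi_t$ is defined only locally. The domain of $\varphi_t$ contains a neighbourhood of any given $p$ for $|t|$ small, and this suffices because the identity is pointwise. Everything else is formal.
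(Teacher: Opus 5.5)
Your proposal is correct and is essentially the paper's argument. The paper's one-line proof is exactly your alternative route: apply $\mathcal{L}_X$ to the terms of \eqref{produit} and use the Leibniz rule. Your main route, via $\varphi_t^*(h\owedge k)=(\varphi_t^*h)\owedge(\varphi_t^*k)$ and the product rule for the bilinear map $\owedge$, derives that same Leibniz rule from the flow definition \eqref{eq:def-lie} instead of quoting it. (Minor wording point: the four terms are tensor products of $h$ and $k$ with permuted slots, not contracted ones, which does not affect the argument.)
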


\begin{proof}
It suffices to apply the Lie derivative to the definition of the Kulkarni--Nomizu product and use the Leibniz rule.
\end{proof}

\begin{ex}\label{exemple1}
  Let $I\subset\mathbb{R}$ be an open interval and consider the manifold
\[
M=I\times\mathbb{R}\times\mathbb{R}^{n-2},
\qquad n\geq 3.
\]
We denote the local coordinates by $(t,x,y_1,\ldots,y_{n-2}).$
Let $\mathfrak{f}:I\longrightarrow ]0,+\infty[$ be a smooth function. 
We consider the Riemannian metric on $M$ given by
\begin{equation}
\label{eq:metric-family}
g=dt^2+\mathfrak{f}(t)^2dx^2+\sum_{j=1}^{n-2}dy_j^2.
\end{equation}
Let us consider $\xi=\partial_t.$ Which means that
$\|\xi\|=g(\partial_t,\partial_t)=1\quad \text{et}\quad \xi^\flat=dt.$ 
Therefore, $$\eta:=\xi^\flat\otimes\xi^\flat
=dt\otimes dt.$$ 
The metric matrix is $(g_{ij})
=
\operatorname{diag}
\left(
1,\mathfrak{f}(t)^2,1,\ldots,1
\right)$ and its inverse matrix is $(g^{ij})
=
\operatorname{diag}
\left(
1,\frac{1}{\mathfrak{f}(t)^2},1,\ldots,1
\right).$

Let us recall the Christoffel symbol formula: $$\Gamma_{ij}^{k}
=\frac12g^{k\ell}
\left(
\partial_i g_{j\ell}
+\partial_j g_{i\ell}
-\partial_\ell g_{ij}
\right)$$
We see that $$\Gamma_{ix}^{k}
=\frac12g^{k\ell}
\left(
\partial_i g_{x\ell}
+\partial_x g_{i\ell}
-\partial_\ell g_{ix}
\right).$$ therefore $$\begin{cases}
    \Gamma_{ix}^{k}
=-\frac12g^{kt}\partial_t g_{ix}
\\
\Gamma_{ix}^{k}=\frac12g^{kx}\partial_i g_{xx}
\end{cases}\Leftrightarrow \begin{cases}
    \Gamma_{xx}^{t}
=-\frac12g^{tt}\partial_t g_{xx}
\\
\Gamma_{tx}^{x}=\frac12g^{xx}\partial_t g_{xx}
\end{cases}.$$
The only non-zero Christoffel symbols are: $$\begin{cases}
    \Gamma_{xx}^{t}
=- \mathfrak{f}'(t)\mathfrak{f}(t)
\\
\Gamma_{tx}^{x}=\frac{\mathfrak{f}'(t)}{\mathfrak{f}(t)}
\end{cases}.$$
So, $$\nabla_{\partial_t}\partial_t=0,\quad \nabla_{\partial_t}\partial_x=\frac{\mathfrak{f}'(t)}{\mathfrak{f}(t)}\partial_x,\quad \nabla_{\partial_x}\partial_t
=\frac{\mathfrak{f}'(t)}{\mathfrak{f}(t)}\partial_x, \quad \nabla_{\partial_x}\partial_x
=-\mathfrak{f}(t)\mathfrak{f}'(t)\partial_t$$
$$\text{et}\quad \nabla_{\partial_i}\partial_j=0 \quad \text{for the departments}\quad y_j.$$
Therefore, $\nabla_{\partial_t}
\nabla_{\partial_x}\partial_x
=\nabla_{\partial_t}(-\mathfrak{f}(t)\mathfrak{f}'(t)\partial_t)=
-(\mathfrak{f}'^2(t)+\mathfrak{f}(t)\mathfrak{f}''(t))\partial_t.$

On the other hand, $\nabla_{\partial_x}
\nabla_{\partial_t}\partial_x
=\nabla_{\partial_x}
\left(\frac{\mathfrak{f}'(t)}{\mathfrak{f}(t)}\partial_x\right)=\frac{a'(t)}{\mathfrak{f}(t)}\nabla_{\partial_x}\partial_x=
-\mathfrak{f}'^2(t)\partial_t.$

Thus, by applying \eqref{Rr}, we obtain: $$ R(\partial_t,\partial_x)\partial_x=-(\mathfrak{f}'^2(t)+\mathfrak{f}(t)\mathfrak{f}''(t))\partial_t+\mathfrak{f}'^2(t)\partial_t=-\mathfrak{f}(t)\mathfrak{f}''(t)\partial_t.$$
By the antisymmetry of the curvature tensor,
$$R(\partial_x,\partial_t)\partial_x
=\mathfrak{f}''(t)\mathfrak{f}(t)\partial_t.$$

Lowering the last index, we obtain: $$R(\partial_t,\partial_x,\partial_x,\partial_t)=g\left(R(\partial_t,\partial_x)\partial_x,
\partial_t
\right)=
-\mathfrak{f}''(t)\mathfrak{f}(t).$$

Thus, \begin{equation}\label{1er}
R(\partial_t,\partial_x,\partial_x,\partial_t)=-\mathfrak{f}''(t)\mathfrak{f}(t).
\end{equation}
The directions $\partial_{y_1},\ldots,\partial_{y_{n-2}}$ exhibit no curvature interaction with the directions $\partial_t$ and $\partial_x$.
$$\text{In particular, }\quad R(\partial_t,\partial_{y_j})=0,\quad R(\partial_x,\partial_{y_j})=0\quad \text{and}\quad R(\partial_{y_j},\partial_{y_i})=0.$$
Thus, the only non-trivial curvature direction is that of the plane
spanned by $\partial_t$ and $\partial_x.$

Let us evaluate the tensor $\eta\owedge g$ on $(\partial_t,\partial_x,\partial_x,\partial_t)$ by applying the Kulkarni-Nomizu product given by formula \eqref{produit}.

Since $$\eta(\partial_t,\partial_t)=1,
\quad
g(\partial_x,\partial_x)=\mathfrak{f}(t)^2,\qquad\text{and}\qquad \eta(\partial_t,\partial_x)=0,$$
we obtain \begin{equation}\label{2em}
    (\eta\owedge g)
(\partial_t,\partial_x,\partial_x,\partial_t)
=\eta(\partial_t,\partial_t)
g(\partial_x,\partial_x)=\mathfrak{f}^2(t).
\end{equation}

The expressions \eqref{1er} and \eqref{2em} directly yield the relation $R=
-\frac{\mathfrak{f}''(t)}{\mathfrak{f}(t)}\eta\owedge g.$

\end{ex}

We now assume that
$$R=\lambda\,\eta\owedge g,
\qquad
\eta=\xi^\flat\otimes\xi^\flat.$$

On the open set $M_\xi=\{p\in M:\xi_p\neq0\},$
let us set $u=\frac{\xi}{\|\xi\|}$
and
$\theta=u^\flat.$

Then $\xi^\flat\otimes\xi^\flat
=\|\xi\|^2\theta\otimes\theta.$

Setting $\kappa=-\lambda\|\xi\|^2,$
we can write, using our curvature convention, $$R=-\kappa(\theta\otimes\theta)\owedge g.$$

A contraction of \eqref{eq1} yields
\begin{equation}\label{eq:ricci}
\operatorname{Ric}=\lambda
\left[(2-n)\xi^\flat\otimes\xi^\flat
-\|\xi\|^2g
\right].
\end{equation}

Consequently,
\begin{equation}
\operatorname{Scal}
=
-2(n-1)\lambda\|\xi\|2.
\label{eq:scalar}
\end{equation}

On $M_\xi$, we thus obtain
$\operatorname{Ric}
=
\kappa g
+
(n-2)\kappa\,\theta\otimes\theta.
$

Thus:

\begin{equation}
\operatorname{Ric}(u,u)=(n-1)\kappa,
\end{equation}

and, for $Y\perp u$,
\begin{equation}
\operatorname{Ric}(Y,Y)=\kappa\|Y\|^2.
\end{equation}
\begin{prop}
On $M_\xi$, the Ricci tensor has at most two eigenvalues: $$(n-1)\kappa
\qquad\text{and}\qquad
\kappa.
$$ The eigenspace corresponding to $(n-1)\kappa$ is
$\operatorname{Span}\{\xi\}$, while $\xi^\perp$ is the eigenspace associated
with $\kappa$.
\end{prop}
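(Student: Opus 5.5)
The argument works directly from the expression of the Ricci tensor already obtained on $M_\xi$, namely $\operatorname{Ric}=\kappa g+(n-2)\kappa\,\theta\otimes\theta$ with $\theta=u^\flat$ and $u=\xi/\|\xi\|$. The first step is to pass to the Ricci endomorphism $Q$, defined by $g(QY,Z)=\operatorname{Ric}(Y,Z)$. Raising an index gives
\[
QY=\kappa Y+(n-2)\kappa\, g(u,Y)\,u .
\]
This is a rank-one perturbation of the multiple $\kappa\,\mathrm{Id}$, and it is self-adjoint because $\operatorname{Ric}$ is symmetric.

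The second step is to evaluate $Q$ on the orthogonal splitting $T_pM=\operatorname{Span}\{u\}\oplus u^\perp$ at each $p\in M_\xi$. Since $g(u,u)=1$, we get $Qu=\kappa u+(n-2)\kappa u=(n-1)\kappa\,u$. For $Y\perp u$ we have $g(u,Y)=0$, so $QY=\kappa Y$. These two computations match the values $\operatorname{Ric}(u,u)=(n-1)\kappa$ and $\operatorname{Ric}(Y,Y)=\kappa\|Y\|^2$ recorded just before the proposition; the only addition is that we now have equalities of vectors rather than just of quadratic-form values. Because $u$ and $\xi$ span the same line, $\operatorname{Span}\{\xi\}$ lies in the $(n-1)\kappa$-eigenspace and $\xi^\perp=u^\perp$ lies in the $\kappa$-eigenspace.

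The third step shows there are no other eigenvalues and identifies the eigenspaces exactly. The decomposition above is a basis of eigenvectors: one vector $u$ together with an orthonormal basis of $u^\perp$. So $Q$ is diagonalizable with spectrum contained in $\{(n-1)\kappa,\kappa\}$, which gives "at most two eigenvalues." To show the eigenspaces are exactly $\operatorname{Span}\{\xi\}$ and $\xi^\perp$, take $Y=\alpha u+Y^\perp$ with $QY=\mu Y$. Comparing components gives $(n-1)\kappa\alpha=\mu\alpha$ and $\kappa Y^\perp=\mu Y^\perp$. If $(n-1)\kappa\neq\kappa$, i.e. $n\neq2$ and $\kappa(p)\neq0$, then either $\alpha=0$ or $Y^\perp=0$, which yields exactly the stated eigenspaces.

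The main obstacle is this degenerate case. When $n=2$, or at points where $\kappa=0$ (equivalently $\lambda=0$, since $\xi\neq0$ on $M_\xi$), the two eigenvalues coincide and $Q=\kappa\,\mathrm{Id}$. Then the eigenspace is all of $T_pM$, and the claim has to be read as ``$\operatorname{Span}\{\xi\}$ and $\xi^\perp$ are contained in the respective eigenspaces.'' The phrase ``at most two'' accounts for this. I would state this caveat explicitly, and assert that the eigenspaces are precisely $\operatorname{Span}\{\xi\}$ and $\xi^\perp$ under the assumption $(n-2)\kappa\neq0$.
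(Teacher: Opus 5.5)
Your proposal is correct and follows essentially the same route as the paper. The paper likewise evaluates \eqref{eq:ricci} on $\xi$ to get $\operatorname{Ric}(\xi,\cdot)=(n-1)\kappa\,\xi^\flat$ and on $X\in\xi^\perp$ to get $\operatorname{Ric}(X,\cdot)=\kappa X^\flat$; you simply raise the index to the Ricci endomorphism first. Your extra steps fill in points the paper's proof leaves implicit: the diagonalizability argument ruling out other eigenvalues, and the caveat that the eigenspaces are exactly $\operatorname{Span}\{\xi\}$ and $\xi^\perp$ only when $(n-2)\kappa\neq0$ (otherwise $\operatorname{Ric}=\kappa g$ and the single eigenspace is all of $T_pM$). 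That caveat is genuinely needed, since the statement as written fails for $n=2$ or at zeros of $\lambda$.
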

\begin{proof}
From relation \eqref{eq:ricci}, $\operatorname{Ric}(\xi,\cdot)
=\lambda\left[(2-n)\|\xi\|^2\xi^\flat
-\|\xi\|^2\xi^\flat\right],$ 

whence $
\operatorname{Ric}(\xi,\cdot)
=
(1-n)\lambda\|\xi\|^2\xi^\flat=(n-1)\kappa \xi^\flat.$ Thus $\xi$ is an eigenvector direction.

Now, if $X\in\xi^\perp$, then $\eta(X)=0.$ Consequently, $\operatorname{Ric}(X,\cdot)
=
-\lambda\|\xi\|^2X^\flat=\kappa X^\flat.
$

Thus $\xi^\perp$ is also an eigenspace.
\end{proof}
\begin{rem}
    Let us choose locally an adapted orthonormal coordinate system $e_1=\frac{\xi}{\|\xi\|},
    \qquad
    e_2,\ldots,e_n\in\xi^\perp.$ With $\theta=\frac{\xi^\flat}{|\xi|}$, we have $\theta(e_1)=1,
    \qquad
    \theta(e_a)=0,
    \quad a\geq2.$ So under the hypothesis \eqref{eq1}, the sectional curvature is given by: 
    \begin{align*}
    K(e_1,e_a)
    &=
    -\lambda,
    \qquad a=2,\ldots,n,
    \label{eq:radialK}
    \\
    K(e_a,e_b)
    &=0,
    \qquad 2\leq a<b\leq n.
    \end{align*}
    Because for $a\geq2$, we have:

    \begin{align*}
    R_{1a1a}
    &=
    \lambda
    \left(
    \theta_1\theta_ag_{a1}
    +
    \theta_a\theta_1g_{1a}
    -
    \theta_1^2g_{aa}
    -
    \theta_a^2g_{11}
    \right)\\
    &=
    -\lambda.
    \end{align*}

    As the coordinate system is orthonormal, $K(e_1,e_a)=R_{1a1a}.$

    If $a,b\geq2$, then $\theta_a=\theta_b=0,$
    and all terms in $R_{abab}$ are zero. So $K(e_a,e_b)=0.$
\end{rem}
\begin{lem}\label{L1}
Suppose $n\geq3$ and the relation \eqref{eq:ricci}, we have:

$$-\mathrm{d} \kappa = \operatorname{div}(\lambda\xi^\flat)\xi^\flat + \lambda\,\nabla_\xi\xi^\flat.$$

Equivalently, for any field $X$,
\begin{equation}
X(-\kappa)
=
\operatorname{div}(\lambda\xi^\flat),g(\xi,X)
+
\lambda,g(\nabla_\xi\xi,X).
\end{equation}
\end{lem}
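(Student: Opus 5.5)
The plan is to exploit the contracted second Bianchi identity $\operatorname{div}\operatorname{Ric}=\tfrac12\,\mathrm{d}\operatorname{Scal}$, which holds for the Levi-Civita connection of any Riemannian metric. Here $(\operatorname{div}T)(X)=\sum_i(\nabla_{e_i}T)(e_i,X)$ in a local orthonormal frame $(e_i)$. I would substitute the explicit expressions \eqref{eq:ricci} and \eqref{eq:scalar} into both sides of this identity. The factor $n-2$ will then appear on both sides, and dividing by it uses exactly the hypothesis $n\geq3$.

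\textbf{Step 1: rewrite the Ricci tensor.} Using $\kappa=-\lambda\|\xi\|^2$, I rewrite \eqref{eq:ricci} as
\[
\operatorname{Ric}=(2-n)\,\lambda\,\xi^\flat\otimes\xi^\flat+\kappa\, g .
\]
Likewise $\operatorname{Scal}=-2(n-1)\lambda\|\xi\|^2=2(n-1)\kappa$, so $\tfrac12\,\mathrm{d}\operatorname{Scal}=(n-1)\,\mathrm{d}\kappa$.

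\textbf{Step 2: compute the divergence term by term.} Since $\nabla g=0$, we get $\operatorname{div}(\kappa g)=\mathrm{d}\kappa$. For the other term I apply the Leibniz rule:
\[
\sum_i\big(\nabla_{e_i}(\lambda\,\xi^\flat\otimes\xi^\flat)\big)(e_i,X)
=\Big(\sum_i\nabla_{e_i}(\lambda\xi^\flat)(e_i)\Big)\,\xi^\flat(X)+\lambda\,(\nabla_\xi\xi^\flat)(X).
\]
The first factor is $\operatorname{div}(\lambda\xi^\flat)$. The second term comes from $\sum_i\xi^\flat(e_i)\,\nabla_{e_i}\xi^\flat=\nabla_\xi\xi^\flat$, and $(\nabla_\xi\xi^\flat)(X)=g(\nabla_\xi\xi,X)$ by metric compatibility. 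Hence
\[
\operatorname{div}\operatorname{Ric}=(2-n)\big[\operatorname{div}(\lambda\xi^\flat)\,\xi^\flat+\lambda\,\nabla_\xi\xi^\flat\big]+\mathrm{d}\kappa .
\]

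\textbf{Step 3: conclude.} Equating this with $(n-1)\,\mathrm{d}\kappa$ gives
\[
(2-n)\big[\operatorname{div}(\lambda\xi^\flat)\,\xi^\flat+\lambda\,\nabla_\xi\xi^\flat\big]=(n-2)\,\mathrm{d}\kappa .
\]
Dividing by $n-2\neq0$ yields $-\mathrm{d}\kappa=\operatorname{div}(\lambda\xi^\flat)\,\xi^\flat+\lambda\,\nabla_\xi\xi^\flat$. Evaluating both sides on an arbitrary vector field $X$ gives the second form of the statement, with $\xi^\flat(X)=g(\xi,X)$.

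\textbf{Expected obstacle.} The main point of care is Step 2: the Leibniz expansion of the divergence of $\lambda\,\xi^\flat\otimes\xi^\flat$ must be contracted in the correct slot, so that it produces $\nabla_\xi\xi^\flat$ rather than a term like $(\operatorname{div}\xi)\,\xi^\flat$ counted twice. The other point is Step 1, where one must check the sign and normalisation conventions for $\operatorname{Scal}$ against \eqref{eq:ricci}. Both are routine once the frame computation is written out.
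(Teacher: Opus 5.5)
Your proposal is correct and follows the paper's proof essentially step for step. You rewrite $\operatorname{Ric}=(2-n)\lambda\,\xi^\flat\otimes\xi^\flat+\kappa g$, expand $\operatorname{div}(\lambda\,\xi^\flat\otimes\xi^\flat)$ by the Leibniz rule, equate with $\tfrac12\mathrm{d}\operatorname{Scal}=(n-1)\mathrm{d}\kappa$ through the contracted Bianchi identity, and divide by $n-2$; your frame computation in Step 2 also justifies the divergence formula that the paper only asserts.
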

\begin{proof}
According to the relation \eqref{eq:ricci}, we have: $\operatorname{Ric} = (2-n)\lambda\xi^\flat\otimes\xi^\flat + \kappa g.$
SO
\begin{align*}
\operatorname{div}(\operatorname{Ric})
={}&
(2-n)\operatorname{div}(\lambda\xi^\flat\otimes\xi^\flat) + \mathrm{d}\kappa.
\end{align*}

For the $1$-form $\xi^\flat$, the divergence of the tensor product verifies:
$$\operatorname{div}(\lambda\xi^\flat\otimes\xi^\flat)=\operatorname{div}(\lambda\xi^\flat)\xi^\flat+\lambda\nabla_\xi\xi^\flat.$$
So,
$$\operatorname{div}(\operatorname{Ric}) = (2-n)\left[ \operatorname{div}(\lambda\xi^\flat)\xi^\flat+\lambda\nabla_\xi\xi^\flat\right]+\mathrm{d}\kappa.$$

On the other hand, the scalar curvature relation gives $\frac{1}{2}\operatorname{d}\mathrm{Scal} = (n-1)\mathrm{d}\kappa.$

The second Bianchi identity, $\operatorname{div}(\operatorname{Ric}) = \frac{1}{2}\operatorname{d}\mathrm{Scal}$, then leads to:
$$(2-n) \left[ \operatorname{div}(\lambda\xi^\flat)\xi^\flat + \lambda\nabla_\xi\xi^\flat \right] +\mathrm{d}\kappa = (n-1)\mathrm{d}\kappa.$$

By grouping the terms into $\mathrm{d}\kappa$, it comes:
$$(2-n)\left[ \mathrm{d}\kappa + \operatorname{div}(\lambda\xi^\flat)\xi^\flat + \lambda\nabla_\xi\xi^\flat \right] = 0.$$

As $n\geq3$, we deduce:
$$-\mathrm{d}\kappa = \operatorname{div}(\lambda\xi^\flat)\xi^\flat + \lambda\nabla_\xi\xi^\flat.$$

Finally, by evaluating this relation on a field of vectors $X$ and noting that $(\nabla_\xi\xi^\flat)(X) = g(\nabla_\xi\xi, X)$ with $\xi^\flat(X) = g(\xi, X)$, we obtain the second formulation.
\end{proof}

\begin{prop}
\label{prop:derivees}
For any integer $k\geq0$ and any differentiable tensor $\mathcal{W}$,
\begin{equation}
    \frac{d^k}{dt^k}\Phi_t^*\mathcal{W}= \Phi_t^*(\mathcal{L}_X^k\mathcal{W}).
    \label{eq:derivees-successives}
\end{equation}
\end{prop}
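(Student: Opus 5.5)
We argue by induction on $k$. The only input is the basic identity
\begin{equation*}
\frac{d}{dt}\Phi_t^*\mathcal{W}=\Phi_t^*(\mathcal{L}_X\mathcal{W}),
\end{equation*}
valid for every smooth tensor $\mathcal{W}$ and every $t$ in the domain of the local flow. Once this is established, the statement follows by applying it to the tensors $\mathcal{L}_X^{k}\mathcal{W}$.

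The first step is to prove the identity at an arbitrary time $t$, not only at $t=0$. We use the group law of the local flow, $\Phi_{t+s}=\Phi_s\circ\Phi_t$, which gives $\Phi_{t+s}^*=\Phi_t^*\circ\Phi_s^*$ on the appropriate domains. Hence
\begin{equation*}
\frac{d}{dt}\Phi_t^*\mathcal{W}
=\left.\frac{d}{ds}\right|_{s=0}\Phi_{t+s}^*\mathcal{W}
=\left.\frac{d}{ds}\right|_{s=0}\Phi_t^*\big(\Phi_s^*\mathcal{W}\big).
\end{equation*}
At a fixed point $p$, the pullback $\Phi_t^*$ is a linear map between finite-dimensional tensor spaces at $\Phi_t(p)$ and at $p$, induced by $d\Phi_t$, and it does not depend on $s$. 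It therefore commutes with $d/ds$. Using the definition \eqref{eq:def-lie}, this yields $\Phi_t^*(\mathcal{L}_X\mathcal{W})$.

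The only delicate point is bookkeeping about domains, and this is where I expect the main (if mild) obstacle. The flow is only local, so we fix $p$ and work on the open set where $(t,p)$ lies in the flow domain. We also need smoothness of $(t,p)\mapsto\Phi_t(p)$ to justify differentiating in $t$ repeatedly.

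For the induction, the case $k=0$ is trivial by \eqref{eq:lie-zero}, and $k=1$ is the identity above. Assume the statement holds for $k$. Then
\begin{equation*}
\frac{d^{k+1}}{dt^{k+1}}\Phi_t^*\mathcal{W}
=\frac{d}{dt}\Phi_t^*\big(\mathcal{L}_X^k\mathcal{W}\big).
\end{equation*}
Applying the identity to the smooth tensor $\mathcal{L}_X^k\mathcal{W}$ gives $\Phi_t^*\big(\mathcal{L}_X(\mathcal{L}_X^k\mathcal{W})\big)$, which equals $\Phi_t^*(\mathcal{L}_X^{k+1}\mathcal{W})$ by \eqref{eq:lie-k}.
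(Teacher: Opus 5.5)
Your proof is correct and follows the same induction on $k$ as the paper. The one difference is that you justify the first-order identity $\frac{d}{dt}\Phi_t^*\mathcal{W}=\Phi_t^*(\mathcal{L}_X\mathcal{W})$ at arbitrary $t$ via the group law $\Phi_{t+s}^*=\Phi_t^*\circ\Phi_s^*$, whereas the paper uses this identity in its inductive step without justification.
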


\begin{proof}
For $k=0$, the identity is immediate since $\mathcal{L}_X^0\mathcal{W}=\mathcal{W}.$

Now assume that $ \frac{d^k}{dt^k}\Phi_t^*\mathcal{W}=\Phi_t^*(\mathcal{L}_X^k\mathcal{W}).$

Differentiating once more, we obtain
$$ \frac{d^{k+1}}{dt^{k+1}}\Phi_t^*\mathcal{W}= \frac{d}{dt} \left[  \Phi_t^*(\mathcal{L}_X^k\mathcal{W}) \right]=\Phi_t^* \left(\mathcal{L}_X(\mathcal{L}_X^k\mathcal{W})\right)=\Phi_t^*\left(\mathcal{L}_X^{k+1}\mathcal{W}\right).$$

The property therefore holds for all $k\geq0$.
\end{proof}
\begin{defi}
    Let $(M,g)$ be a Riemannian manifold of dimension $n\ge 2$, $X$ a smooth vector field, $\mu$ and $\mu_1$ real constants, $f$ a smooth function, and $\theta$ a $1$-form.
    \begin{enumerate}
        \item The quadruple $(M,g,X,\mu)$ is called a Ricci soliton if \begin{equation}\label{e0}
            \operatorname{Ric}+\frac{1}{2}\mathcal{L}_Xg=\mu g;        \end{equation}
        \item The quadruple $(M,g,X,f)$ is called an almost Ricci soliton if \begin{equation}
            \operatorname{Ric}+\frac{1}{2}\mathcal{L}_Xg=f g;        \end{equation}
        \item The quintuple $(M,g,X,f,\mu_1)$ is called a $\theta$-almost Yamabe soliton if \begin{equation}\label{e00}
           \frac{1}{2}\mathcal{L}_Xg=\left(f-\operatorname{Scal}\right)g+\mu_1\theta\otimes\theta.        \end{equation}
    \end{enumerate}
\end{defi}
\begin{rem}
Regarding the nature of the geometric soliton determined by the constant $\mu$, we distinguish three regimes based on its sign:
\begin{itemize}[label=$\bullet$]
    \item If $\mu> 0$, the soliton is called \textbf{shrinking};
    \item If $\mu < 0$, the soliton is called \textbf{expanding};
    \item If $\mu = 0$, the soliton is called \textbf{stationary}.
\end{itemize}
\end{rem}
\section{Main results}
In this section, we present the main results of our study concerning the characterization of geometric solitons and finite-order symmetries on Riemannian manifolds with rank-one anisotropic curvature. Throughout the remainder of this text, $(M,g)$ denotes an $n$-dimensional Riemannian manifold whose Riemann curvature tensor satisfies condition \eqref{eq1}, and $\xi \in \mathfrak{X}(M)$ is a vector field defining the characteristic distribution $\mathcal{D} = \operatorname{Span}\{\xi\}$.
\subsection{Study of the Ricci soliton case}
\begin{thm}
Let $(M, g)$ be a Riemannian manifold of dimension $n \geq 3$ whose curvature tensor is given by the relation \eqref{eq1}, and let $\xi$ be a recurrent vector field such that $\nabla_X \xi = \omega(X)\xi$, where the recurrence 1-form is of the form $\omega = (n-2)\lambda \xi^\flat$.
 Then, the expression for the Ricci tensor yields an almost Ricci soliton of the form $\left(M, g, \frac{\nabla(\ln(\|\xi\|))}{(n-2)\lambda}, -\lambda \|\xi\|^2\right)$.
\end{thm}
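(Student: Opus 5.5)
The plan is to show that the vector field $V:=\frac{\nabla(\ln\|\xi\|)}{(n-2)\lambda}$ coincides with $\xi$ itself. The almost Ricci soliton equation then reduces to computing $\mathcal{L}_\xi g$ from the recurrence condition and comparing it with the decomposition of $\operatorname{Ric}$ obtained in \eqref{eq:ricci}. Throughout we work on the open set $M_\xi$, where the statement makes sense, and where $\lambda\neq0$. This last condition is implicitly required for $V$ to be defined, and I will state it explicitly.

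\textbf{Step 1: identify $V$.} For any vector field $X$, recurrence gives
\[
X(\|\xi\|^2)=2g(\nabla_X\xi,\xi)=2\omega(X)\|\xi\|^2 .
\]
Hence $\mathrm{d}\ln\|\xi\|=\omega=(n-2)\lambda\,\xi^\flat$. Raising the index gives $\nabla\ln\|\xi\|=(n-2)\lambda\,\xi$, and therefore $V=\xi$.

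\textbf{Step 2: compute $\mathcal{L}_\xi g$.} We use
\[
(\mathcal{L}_\xi g)(X,Y)=g(\nabla_X\xi,Y)+g(\nabla_Y\xi,X).
\]
By recurrence, $g(\nabla_X\xi,Y)=\omega(X)\xi^\flat(Y)=(n-2)\lambda\,\xi^\flat(X)\xi^\flat(Y)$. This expression is already symmetric in $X$ and $Y$, so
\[
\tfrac12\mathcal{L}_\xi g=(n-2)\lambda\,\xi^\flat\otimes\xi^\flat .
\]

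\textbf{Step 3: conclude.} By \eqref{eq:ricci}, $\operatorname{Ric}=(2-n)\lambda\,\xi^\flat\otimes\xi^\flat-\lambda\|\xi\|^2g$. Adding the result of Step 2, the $\xi^\flat\otimes\xi^\flat$ terms cancel, leaving
\[
\operatorname{Ric}+\tfrac12\mathcal{L}_Vg=-\lambda\|\xi\|^2g=\kappa g .
\]
This is exactly the almost Ricci soliton equation with $f=-\lambda\|\xi\|^2$.

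The main obstacle is Step 1: recognizing that the oddly written vector field $V$ is just $\xi$. This rests on $\mathrm{d}\ln\|\xi\|=\omega$, which holds for any recurrent field, together with the specific form of $\omega$. A secondary point concerns the hypotheses. We need $\lambda\neq0$ and $\xi\neq0$ for $V$ to be defined. We should also note that the condition $n\geq3$ only serves to keep the factor $(n-2)$ nonzero. After that, Steps 2 and 3 are direct substitutions.
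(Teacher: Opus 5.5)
Your proposal is correct and follows essentially the paper's route. You compute $\mathcal{L}_\xi g=2(n-2)\lambda\,\xi^\flat\otimes\xi^\flat$ from recurrence, use $\omega=\mathrm{d}\ln\|\xi\|$ to get $\nabla\ln\|\xi\|=(n-2)\lambda\,\xi$ (so the soliton field is just $\xi$), and cancel the $\xi^\flat\otimes\xi^\flat$ term in \eqref{eq:ricci}. The differences are minor and in your favour: you reorder these steps, you actually prove $\omega=\mathrm{d}\ln\|\xi\|$ (the paper only asserts it), and you make $\lambda\neq0$, $\xi\neq0$ explicit.

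As an aside, which affects the statement rather than your argument: your Step 1 implies that $\xi/\|\xi\|$ is parallel, hence $R(\cdot,\cdot)\xi=0$. But $\bigl((\xi^\flat\otimes\xi^\flat)\owedge g\bigr)(\xi,Y,Y,\xi)=\|\xi\|^4\neq0$ for a unit $Y\perp\xi$. So the hypotheses force $\lambda=0$ on $M_\xi$, the field $\nabla\ln\|\xi\|/((n-2)\lambda)$ is nowhere defined, and the theorem is in fact vacuous.
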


\begin{proof}

According to Koszul's formula, the Lie derivative of the metric with respect to the field $\xi$ is expressed as:
$$(\mathcal{L}_\xi g)(X, Y) = g(\nabla_X \xi, Y) + g(X, \nabla_Y \xi)$$
Substituting the recurrence condition $\nabla_X \xi = (n-2)\lambda(X)\xi = (n-2)\lambda \, g(\xi, X)\xi$, we obtain:
\begin{align*}
(\mathcal{L}_\xi g)(X, Y) &= g( (n-2)\lambda g(\xi, X)\xi, Y) + g(X,  (n-2)\lambda g(\xi, Y)\xi) \\
&=  (n-2)\lambda g(\xi, X)g(\xi, Y) +  (n-2)\lambda g(\xi, Y)g(\xi, X) \\
&= 2 (n-2)\lambda g(\xi, X)g(\xi, Y)
\end{align*}
In tensor notation, this is rewritten as:
$$\mathcal{L}_\xi g = 2 (n-2)\lambda (\xi^\flat \otimes \xi^\flat)$$

The expression for the Ricci tensor given in \eqref{eq:ricci} thus becomes \begin{equation}\label{a}
    \operatorname{Ric}=-\frac{1}{2}\mathcal{L}_\xi g-\lambda\|\xi\|^2g.
\end{equation}

As is the case for any recurrent field, the 1-form $\omega$ is necessarily exact and is expressed as $\omega = \mathrm{d}(\ln \|\xi\|)$. Using the hypothesis $\omega = (n-2)\lambda \xi^\flat$, we have :
$$\mathrm{d}(\ln \|\xi\|) =(n-2)\lambda  \xi^\flat$$
Applying the musical isomorphism (the gradient) to pass from 1-forms to vector fields, we obtain:
$$\nabla(\ln \|\xi\|) = (n-2)\lambda \xi.$$ Thus, equation \eqref{a} becomes $$\operatorname{Ric}+\frac{1}{2}\mathcal{L}_{\frac{\nabla(\ln \|\xi\|) }{(n-2)\lambda }}g=-\lambda\|\xi\|^2g$$
\end{proof}
\begin{thm}
   Let $(M,g)$ be a Riemannian manifold of dimension $n \geq 3$ that is not Ricci-flat. If the quadruple $(M,g,X,\mu)$ is a Ricci soliton, then the vector field $X$ is not conformal. Furthermore, the Riemannian manifold $(M,g)$ cannot be an Einstein manifold.
\end{thm}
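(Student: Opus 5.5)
Implicitly the manifold satisfies \eqref{eq1}, so the Ricci tensor is \eqref{eq:ricci}. On $M_\xi$ it reads
\[
\operatorname{Ric}=\kappa g+(n-2)\kappa\,\theta\otimes\theta,\qquad \kappa=-\lambda\|\xi\|^2 .
\]
Suppose $(M,g,X,\mu)$ is a Ricci soliton. Then \eqref{e0} gives
\[
\tfrac12\mathcal{L}_Xg=(\mu-\kappa)g-(n-2)\kappa\,\theta\otimes\theta .
\]
Both claims will follow by comparing the two sides of this identity on the orthonormal frame $e_1=u$, $e_2,\dots,e_n\in\xi^\perp$ of the Remark on sectional curvatures. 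The plan is to derive a contradiction from $\kappa\neq0$ somewhere in $M_\xi$.

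\emph{$X$ is not conformal.} Assume $\mathcal{L}_Xg=2\varphi g$ for some function $\varphi$. Evaluating the displayed identity on $(e_a,e_a)$ with $a\ge2$ gives $\varphi=\mu-\kappa$. Evaluating it on $(e_1,e_1)$ gives $\varphi=\mu-\kappa-(n-2)\kappa$. Subtracting, $(n-2)\kappa=0$, and since $n\ge3$ we get $\kappa=0$ on $M_\xi$. On the complement of $M_\xi$ we have $\xi=0$, so $R=0$ there by \eqref{eq1}. Hence $\kappa\equiv0$ forces $\lambda\|\xi\|^2=0$ on $M_\xi$, so $\operatorname{Ric}=0$ everywhere by \eqref{eq:ricci}. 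This contradicts the assumption that $(M,g)$ is not Ricci-flat.

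\emph{$(M,g)$ is not Einstein.} Suppose $\operatorname{Ric}=\rho g$. By the Proposition on the eigenvalues of $\operatorname{Ric}$, at each point of $M_\xi$ the eigenvalues are $(n-1)\kappa$ on $\operatorname{Span}\{\xi\}$ and $\kappa$ on $\xi^\perp$. Since $\xi^\perp\neq0$ when $n\ge3$, the Einstein condition forces $(n-1)\kappa=\kappa$, that is $(n-2)\kappa=0$, so $\kappa=0$. Outside $M_\xi$ the curvature vanishes. Therefore $\operatorname{Ric}\equiv0$, again a contradiction. This argument does not use the soliton structure. Alternatively, in the soliton setting, Einstein plus \eqref{e0} would give $\mathcal{L}_Xg=2(\mu-\rho)g$, so $X$ would be conformal (homothetic), contradicting the first part; I will present this version to tie the two statements together.

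\emph{Main obstacle.} The delicate point is the passage from "not Ricci-flat" to "$\kappa\neq0$ at some point of $M_\xi$", together with the treatment of the zero set of $\xi$. I will handle it as above: $R$, and hence $\operatorname{Ric}$, vanishes wherever $\xi=0$, and on $M_\xi$ the Ricci tensor is a multiple of $\kappa$. So non-Ricci-flatness is exactly the statement that $\kappa$ is not identically zero. The contradictions above are then pointwise at any point of $M_\xi$ where $\kappa\neq0$, which is enough: conformality and the Einstein condition hold globally, so they hold at that point.
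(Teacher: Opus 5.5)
Your proof is correct and follows the paper's idea: the paper argues pointwise that a multiple of $g$ (rank $0$ or $n$) cannot equal the rank-one tensor $(2-n)\lambda\,\xi^\flat\otimes\xi^\flat$, and your evaluation on the adapted frame $e_1=u$, $e_a\in\xi^\perp$ makes exactly this explicit, as does your eigenvalue comparison (or reduction to the homothetic case) for the Einstein part. Your version is more careful than the paper's: you justify why non-Ricci-flatness gives a point of $M_\xi$ with $\kappa\neq0$ and you handle the zero set of $\xi$, whereas the paper simply asserts $\lambda\neq0$.
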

\begin{proof}
    The condition that $(M,g,X,\mu)$ is a Ricci soliton is equivalent to equation \eqref{e0}.
    
    Using relation \eqref{eq:ricci}, equation \eqref{e0} becomes \begin{equation}\label{e}
        \lambda
\left[(2-n)\xi^\flat\otimes\xi^\flat-\|\xi\|^2g
\right]+\frac{1}{2}\mathcal{L}_Xg=\mu g\Leftrightarrow \mathcal{L}_Xg =2\left[\lambda\left[
(2-n)\xi^\flat\otimes\xi^\flat\right] +
\left(\mu-
\lambda\|\xi\|^2\right)g
\right]  
    \end{equation} Suppose that $\mathcal{L}_Xg=2\varphi g$; we then obtain $$\left(\varphi- \mu+
\lambda\|\xi\|^2\right)g=\lambda\left[
(2-n)\xi^\flat\otimes\xi^\flat\right].$$ Since $\xi^\flat\otimes\xi^\flat$ has rank 1, $\lambda \neq 0$, and $g$ has rank $n$, the preceding equality cannot hold. Furthermore, if we set $\operatorname{Ric}=\beta g$, we obtain $(2-n)\lambda\xi^\flat\otimes\xi^\flat=(\beta+\lambda\|\xi\|^2)g$. Given that $n \neq 2$ and $\lambda \neq 0$ (since $\operatorname{Ric} \neq 0$), the equality does not hold, yielding the result. 
\end{proof} 
\begin{thm}
   Let $(M,g)$ be a Riemanian manifold of dimension $2$. The quintuple $(M,g,X,\mu)$ is a Ricci soliton if and only if $(M,g,X,-\mu)$ is a Yamabe soliton. Moreover if $\|\xi\|=c^2$ then it is an Einstein variety.  
\end{thm}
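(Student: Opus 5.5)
The plan is to specialize the contracted curvature relation \eqref{eq:ricci} to $n=2$, where the anisotropic term disappears, and then compare the soliton equations directly. For $n=2$ the coefficient $(2-n)$ in \eqref{eq:ricci} vanishes, so
$$\operatorname{Ric}=-\lambda\|\xi\|^2 g=\kappa g,$$
and by \eqref{eq:scalar} $\operatorname{Scal}=-2\lambda\|\xi\|^2=2\kappa$. Thus $\operatorname{Ric}=\tfrac12\operatorname{Scal}\,g$, which is the usual two-dimensional identity, now obtained from \eqref{eq1}. I will then substitute this into the Ricci soliton equation \eqref{e0}, which gives
$$\tfrac12\mathcal{L}_Xg=\Big(\mu-\tfrac12\operatorname{Scal}\Big)g .$$

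The Yamabe soliton is not defined in the preliminaries. I will take it to be the case $\mu_1=0$ with $f$ constant of the $\theta$-almost Yamabe soliton \eqref{e00}, namely $\tfrac12\mathcal{L}_Xg=(\nu-\operatorname{Scal})g$ with some constant $\nu$. The equivalence then amounts to matching this with the displayed equation above. The genuine difficulty is a normalization mismatch: Ricci gives $\tfrac12\operatorname{Scal}$, while \eqref{e00} carries $\operatorname{Scal}$, and the sign of the constant must also come out as $-\mu$.

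To resolve this, I will first try the standard convention $\mathcal{L}_Xg=2(\operatorname{Scal}-\rho)g$, that is, $\tfrac12\mathcal{L}_Xg=(\operatorname{Scal}-\rho)g$, with $X$ rescaled. Writing the Ricci soliton as $\operatorname{Ric}+\tfrac12\mathcal{L}_Xg=\mu g$ gives $\tfrac12\mathcal{L}_{-2X}g=(\operatorname{Scal}-2\mu)g$. So the statement holds verbatim only after a constant rescaling of the field, or with Yamabe normalized by $\tfrac12\operatorname{Scal}$, in which case the constant becomes $-\mu$ once the orientation of $X$ is reversed. I expect this to be the main obstacle. I will fix whichever normalization makes "$(M,g,X,-\mu)$" literally correct, state it explicitly, and check that both directions of the equivalence are then just the same identity $\operatorname{Ric}=\tfrac12\operatorname{Scal}\,g$ read in the two directions.

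For the Einstein claim, I read the hypothesis $\|\xi\|=c^2$ as saying that $\|\xi\|$ is a constant. Then $\operatorname{Ric}=-\lambda c^4 g$, so $(M,g)$ is Einstein exactly when $\lambda$ is constant; note that every surface is Einstein with a function factor in any case. To get a constant factor, I will use the soliton equation. The field $X$ is conformal, with $\mathcal{L}_Xg=2(\mu+\lambda c^4)g$. I will either argue that $\lambda$ is constant, or else interpret Einstein in the two-dimensional sense $\operatorname{Ric}=\kappa g$ with $\kappa=-\lambda c^4$, which holds immediately. Since Lemma \ref{L1} needs $n\ge3$, it cannot be used to force $\lambda$ to be constant, so I will present the weaker, pointwise Einstein conclusion as what the argument actually yields.
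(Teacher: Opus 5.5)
Your approach is the same substitution the paper makes. Your outcome $\tfrac12\mathcal{L}_Xg=(\mu-\tfrac12\operatorname{Scal})g$ is the correct one: the normalization mismatch you detect is real, and it is an error in the paper, not in your plan. Your computation uses only $\operatorname{Ric}=\tfrac12\operatorname{Scal}\,g$, which holds on every surface and does not depend on the sign conventions behind \eqref{eq:ricci}.

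The paper reaches $(M,g,X,-\mu)$ through two slips that happen to compensate each other. First, the signs in \eqref{e} are wrong. Solving \eqref{e0} for $\mathcal{L}_Xg$ gives $\mathcal{L}_Xg=2\bigl[(n-2)\lambda\,\xi^\flat\otimes\xi^\flat+(\mu+\lambda\|\xi\|^2)g\bigr]$, hence $\mathcal{L}_Xg=2(\mu+\lambda\|\xi\|^2)g$ for $n=2$; this is the form you use in your last paragraph. Second, the proof takes $\operatorname{Scal}=-\lambda\|\xi\|^2$, whereas \eqref{eq:scalar} gives $-2\lambda\|\xi\|^2$ for $n=2$. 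Together these turn $-\tfrac12\operatorname{Scal}$ into $+\operatorname{Scal}$.

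\textbf{The first sentence of the statement is false as written.} So do not try to choose a normalization that makes $(M,g,X,-\mu)$ literally correct. Among conventions $\tfrac12\mathcal{L}_Xg=(a\operatorname{Scal}+b\rho)g$, the only one that works is the artificial choice $a=-\tfrac12$, $b=-1$. Here is a counterexample. Take the hyperbolic plane with a unit field $\xi$, e.g.\ $\xi=y\,\partial_x$ in the half-plane model. It satisfies \eqref{eq1}, because algebraic curvature tensors in dimension two form a line and $(\xi^\flat\otimes\xi^\flat)\owedge g\neq0$. With $X=0$ and $\mu=-1$ it is a Ricci soliton. But $\operatorname{Scal}=-2$, so $(M,g,0,1)$ is a Yamabe soliton neither for $\tfrac12\mathcal{L}_Yg=(\operatorname{Scal}-\rho)g$ nor for \eqref{e00} with $\mu_1=0$.

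What your computation actually proves, and what you should state, is this: $(M,g,X,\mu)$ is a Ricci soliton if and only if $(M,g,-2X,2\mu)$ is a Yamabe soliton in the standard convention. Equivalently, if and only if $(M,g,2X,2\mu)$ satisfies \eqref{e00} with $\mu_1=0$ and $f\equiv2\mu$.

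Your side remark on the $\tfrac12\operatorname{Scal}$-normalization is off by a sign. From $\tfrac12\mathcal{L}_{\pm X}g=\pm(\mu-\tfrac12\operatorname{Scal})g$ you read off the constant $+\mu$, never $-\mu$. This holds both for $\tfrac12\mathcal{L}_Yg=(\tfrac12\operatorname{Scal}-\rho)g$ (with $Y=-X$) and for $\tfrac12\mathcal{L}_Yg=(\rho-\tfrac12\operatorname{Scal})g$ (with $Y=X$).

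\textbf{The second sentence.} The paper's proof says nothing about it. Your reading is correct: $\operatorname{Ric}=\tfrac12\operatorname{Scal}\,g$ holds pointwise on every surface, so the hypothesis $\|\xi\|=c^2$ plays no role there. The constant-factor version is false, so drop the option of proving $\lambda$ constant rather than attempting it. Hamilton's cigar $g=(dx^2+dy^2)/(1+x^2+y^2)$ on $\mathbb{R}^2$ is a steady Ricci soliton with Gaussian curvature $K=2/(1+x^2+y^2)$. With the unit field $\xi=\sqrt{1+x^2+y^2}\,\partial_x$ it satisfies \eqref{eq1} with $\|\xi\|=1$ and $\lambda=\pm K$ non-constant, so it is not Einstein with a constant factor.
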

\begin{proof}
    Suppose $(M,g,X,\mu)$ is Ricci soliton so the equation \eqref{e} becomes $\mathcal{L}_Xg=2\left(\mu-
\lambda\|\xi\|^2\right)g$, since $\operatorname{Scal}=-
\lambda\|\xi\|^2$ so we just need to set $\rho=-\mu$ and we obtain the result.
\end{proof}
\begin{cor}
    Let $(M,g)$ be a Riemannian variety whose curvature tensor satisfies the relation \eqref{eq1}, if the quadruplet 
 $(M,g,\xi,\mu)$ is a Ricci soliton then $$\frac{1}{2(1-n)}\left(\nabla\left(\operatorname{div}(\xi)\right)\right)^\flat=\operatorname{div}(\lambda\xi^\flat)\xi^\flat+\lambda\nabla_\xi\xi^\flat.$$
\end{cor}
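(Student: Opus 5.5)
We trace the soliton equation, take the differential of the result, and then substitute the expression for the gradient of $\kappa$ supplied by Lemma \ref{L1}. Throughout we work on $M_\xi$, where $\kappa=-\lambda\|\xi\|^2$ is defined, and assume $n\geq3$ so that Lemma \ref{L1} applies.

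\textbf{Step 1 (trace).} With $X=\xi$, the Ricci soliton equation \eqref{e0} reads $\operatorname{Ric}+\frac12\mathcal{L}_\xi g=\mu g$. We contract it with $g$ and use $\operatorname{tr}_g(\mathcal{L}_\xi g)=2\operatorname{div}(\xi)$. This gives
$$\operatorname{Scal}+\operatorname{div}(\xi)=n\mu .$$
Since $\mu$ is constant, differentiating yields $\mathrm{d}\operatorname{Scal}=-\mathrm{d}(\operatorname{div}\xi)$.

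\textbf{Step 2 (scalar curvature in terms of $\kappa$).} By \eqref{eq:scalar} and the definition of $\kappa$, $\operatorname{Scal}=-2(n-1)\lambda\|\xi\|^2=2(n-1)\kappa$. Combining with Step 1 gives
$$2(n-1)\,\mathrm{d}\kappa=-\mathrm{d}(\operatorname{div}\xi),\qquad\text{that is}\qquad \mathrm{d}\kappa=\frac{1}{2(1-n)}\,\mathrm{d}(\operatorname{div}\xi).$$
Passing through the musical isomorphism, the left-hand side of the corollary, $\frac{1}{2(1-n)}\bigl(\nabla(\operatorname{div}\xi)\bigr)^\flat$, is exactly the $1$-form $\mathrm{d}\kappa$.

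\textbf{Step 3 (use of Lemma \ref{L1}).} Lemma \ref{L1} expresses the gradient of $\kappa$ through $\operatorname{div}(\lambda\xi^\flat)\xi^\flat+\lambda\nabla_\xi\xi^\flat$. Substituting this for $\mathrm{d}\kappa$ in Step 2 is meant to produce the stated identity.

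\textbf{Main obstacle: sign bookkeeping.} As Lemma \ref{L1} is stated, it gives $-\mathrm{d}\kappa=\operatorname{div}(\lambda\xi^\flat)\xi^\flat+\lambda\nabla_\xi\xi^\flat$. Combined with Step 2, that yields the coefficient $\frac{1}{2(n-1)}$, not the stated $\frac{1}{2(1-n)}$. The stated form follows only if the right-hand side equals $+\mathrm{d}\kappa$.

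I would therefore redo the Bianchi step of Lemma \ref{L1}, regrouping $(2-n)[\cdots]+\mathrm{d}\kappa=(n-1)\mathrm{d}\kappa$, while tracking the divergence convention used for $\operatorname{div}(\lambda\xi^\flat\otimes\xi^\flat)$, namely on which slot the contraction acts. I would also track the curvature sign convention \eqref{Rr} that fixes $\operatorname{Scal}=2(n-1)\kappa$.

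The proof as stated goes through if, under a consistent choice of these conventions, the combination equals $+\mathrm{d}\kappa$. If the careful recomputation confirms $-\mathrm{d}\kappa$ instead, then the honest conclusion of this plan is the identity with $\frac{1}{2(n-1)}$. In that case the stated coefficient cannot be reached, and I would report the discrepancy rather than force the sign.
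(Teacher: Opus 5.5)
Your plan is the paper's own route: trace the soliton equation, differentiate, insert Lemma \ref{L1}. Your Steps 1--2 are computed correctly, so the sign clash you detect is genuine and not a slip on your side.

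The paper obtains $\frac{1}{2(1-n)}$ only because it traces \eqref{e}, namely $\mathcal{L}_\xi g=2\bigl[\lambda(2-n)\eta+(\mu-\lambda\|\xi\|^2)g\bigr]$. That gives $\operatorname{div}\xi=n\mu+2(n-1)\kappa$, hence $\mathrm{d}(\operatorname{div}\xi)=2(n-1)\,\mathrm{d}\kappa$, which is the opposite of your Step 2. But \eqref{e} does not follow from \eqref{e0} and \eqref{eq:ricci}. Solving for $\mathcal{L}_\xi g$ correctly gives $2\bigl[\lambda(n-2)\eta+(\mu+\lambda\|\xi\|^2)g\bigr]$, whose trace is your $\operatorname{div}\xi=n\mu-\operatorname{Scal}$. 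So with \eqref{eq:ricci}--\eqref{eq:scalar} as printed, the honest outcome is the one you foresaw, with coefficient $\frac{1}{2(n-1)}$. The printed identity would then hold only at points where $\mathrm{d}\operatorname{Scal}=0$, where both sides vanish.

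As written, however, your proposal does not yet prove the statement. It stops before deciding the sign, and two of the three places where you propose to look are not where the sign lives. The slot on which $\operatorname{div}(\lambda\xi^\flat\otimes\xi^\flat)$ contracts is irrelevant, since $\eta$ is symmetric. Lemma \ref{L1} is insensitive to the sign of the Ricci tensor: the contracted Bianchi identity is linear in $\operatorname{Ric}$, so it yields $\operatorname{div}(\lambda\eta)=\mathrm{d}(\lambda\|\xi\|^2)=-\mathrm{d}\kappa$ whichever sign is used.

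The sign lives in \eqref{eq:ricci} itself, as your third suspicion suggested. Take \eqref{Rr}, $R(X,Y,Z,W)=g(R(X,Y)Z,W)$, \eqref{produit}, and the usual contraction $\operatorname{Ric}(Y,Z)=\sum_i R(e_i,Y,Z,e_i)$. Contracting $R=\lambda\eta\owedge g$ then gives
\[
\operatorname{Ric}=\lambda\bigl[(n-2)\eta+\|\xi\|^2g\bigr],\qquad \operatorname{Scal}=2(n-1)\lambda\|\xi\|^2=-2(n-1)\kappa .
\]
The printed \eqref{eq:ricci} is instead the contraction $\sum_i R(e_i,Y,e_i,Z)$, i.e.\ $-\operatorname{Ric}$. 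As a sanity check, that contraction applied to the unit sphere $R=\tfrac12 g\owedge g$ would give $\operatorname{Ric}=-(n-1)g$.

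With the correct sign, your Step 2 becomes $\mathrm{d}(\operatorname{div}\xi)=2(n-1)\,\mathrm{d}\kappa$. Lemma \ref{L1} then gives $\frac{1}{2(1-n)}\mathrm{d}(\operatorname{div}\xi)=-\mathrm{d}\kappa=\operatorname{div}(\lambda\xi^\flat)\xi^\flat+\lambda\nabla_\xi\xi^\flat$, which is exactly the statement. Moreover, \eqref{e} is then precisely the soliton equation solved for $\mathcal{L}_\xi g$, which is why the paper's computation lands on the stated coefficient despite contradicting its own \eqref{eq:ricci}.

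To close your argument, commit to this contraction and say explicitly that \eqref{eq:ricci}--\eqref{eq:scalar} are being corrected in sign. Your restriction to $n\ge 3$ is needed for Lemma \ref{L1} in any case.
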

\begin{proof}
    Suppose that $(M,g,\xi,\mu)$ is a Ricci soliton so the relation \eqref{e} becomes $$\mathcal{L}_\xi g =2\left[\lambda\left[
(2-n)\xi^\flat\otimes\xi^\flat\right] +
\left(\mu-
\lambda|\xi|^2\right)g
\right] \Rightarrow \operatorname{div}(\xi)=n\mu +2(1-n)\lambda\|\xi\|^2.$$ Since $\kappa=-\lambda\|\xi\|^2$ therefore $\mathrm{d}\left(\operatorname{div}(\xi)\right)=2(n-1)\mathrm{d}\kappa$.
According to Lemma \ref{L1} we have $$\mathrm{d}\left(\operatorname{div}(\xi)\right)=2(1-n)\left(\operatorname{div}(\lambda\xi^\flat)\xi^\flat+\lambda\nabla_\xi\xi^\flat\right)$$ or $\mathrm{d}\left(\operatorname{div}(\xi)\right)=\left(\nabla\left(\operatorname{div}(\xi)\right)\right)^\flat$ hence the result.
\end{proof}
\begin{prop}
    Let $X$ be a smooth vector field of the variety $(M,g)$ of dimension $n\geq 2$. If the scalar curvature $\operatorname{Scal}$ is a constant denoted $\operatorname{Scal}_0$ and $\lambda$ a positive smooth function then the quadruplet $(M,g,X,\mu)$ is a Ricci soliton if and only if $\Big(M,g,X,\mu+\frac{(2n-1)\operatorname{Scal}_0}{2(n-1)},(2-n)\Big)$ is a Yamabe $\sqrt{\lambda}\xi^\flat$-soliton.
\end{prop}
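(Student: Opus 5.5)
The plan is a direct rewriting of the soliton equation, using only \eqref{eq:ricci}, \eqref{eq:scalar}, \eqref{e} and the definition \eqref{e00}. Every step will be an equality of symmetric $(0,2)$-tensors that can be read in both directions, so the same computation gives both implications.

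\textbf{Step 1: the Ricci soliton equation.} By \eqref{e}, $(M,g,X,\mu)$ is a Ricci soliton if and only if
\[
\frac12\mathcal{L}_Xg=(2-n)\lambda\,\xi^\flat\otimes\xi^\flat+\left(\mu-\lambda\|\xi\|^2\right)g .
\]

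\textbf{Step 2: replace the two coefficients.} First, I eliminate $\lambda\|\xi\|^2$ using the constancy of the scalar curvature. By \eqref{eq:scalar}, $\operatorname{Scal}_0=-2(n-1)\lambda\|\xi\|^2$. Hence
\[
\lambda\|\xi\|^2=-\frac{\operatorname{Scal}_0}{2(n-1)},
\]
and the coefficient of $g$ in Step 1 becomes $\mu+\frac{\operatorname{Scal}_0}{2(n-1)}$, which is constant. Second, since $\lambda>0$, the $1$-form $\theta:=\sqrt{\lambda}\,\xi^\flat$ is well defined and smooth, and it satisfies $\theta\otimes\theta=\lambda\,\xi^\flat\otimes\xi^\flat$. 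The Ricci soliton equation therefore reads
\[
\frac12\mathcal{L}_Xg=\left(\mu+\frac{\operatorname{Scal}_0}{2(n-1)}\right)g+(2-n)\,\theta\otimes\theta .
\]

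\textbf{Step 3: match with \eqref{e00}.} I compare this with the $\theta$-almost Yamabe soliton equation \eqref{e00}, taking $\mu_1=2-n$ and $f=\mu+\frac{(2n-1)\operatorname{Scal}_0}{2(n-1)}$. The only thing to check is the coefficient of $g$:
\[
f-\operatorname{Scal}=\mu+\operatorname{Scal}_0\left(\frac{2n-1}{2(n-1)}-1\right)=\mu+\frac{\operatorname{Scal}_0}{2(n-1)} .
\]
This is exactly the coefficient found in Step 2. Both equations are then identical, which proves the equivalence.

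\textbf{Main obstacle.} The algebra is straightforward; the points that need care are hypotheses.
\begin{itemize}
\item The positivity of $\lambda$ is needed for $\sqrt{\lambda}$ to make sense as a smooth function, so that $\theta$ is a genuine $1$-form.
\item The constancy of $\operatorname{Scal}$ is what makes $f$ constant, although \eqref{e00} only requires $f$ smooth.
\item For $n=2$ the conditions degenerate: $\mu_1=0$, and \eqref{eq:scalar} must be read with the contraction convention used there. I would state this case separately and check that the formulas remain consistent.
\end{itemize}
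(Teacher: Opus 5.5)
Your proposal is correct and follows the paper's proof essentially verbatim. You use the constancy of $\operatorname{Scal}$ to eliminate $\lambda\|\xi\|^2$ in \eqref{e}, absorb $\lambda$ into $\sqrt{\lambda}\,\xi^\flat$, and check that $f-\operatorname{Scal}_0=\mu+\frac{\operatorname{Scal}_0}{2(n-1)}$ with $\mu_1=2-n$; your remark that every step is a reversible tensor identity supplies the converse that the paper leaves implicit in its $\Leftrightarrow$, and the case $n=2$ needs no separate treatment since the computation only divides by $n-1$.
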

\begin{proof}
    Suppose that the quadruplet $(M,g,X,\mu)$ is a Ricci soliton and $\operatorname{Scal}=\operatorname{Scal}_0\in\mathbb{R}$, so $\lambda\|\xi\|^2=\frac{\operatorname{Scal_0}}{2(1-n)}$ and the relation \eqref{e}, we obtain $$\frac{1}{2}\mathcal{L}_Xg=\lambda\left[
(2-n)\xi^\flat\otimes\xi^\flat\right] +
\left(\mu+
\frac{\operatorname{Scal_0}}{2(n-1)}\right)g$$ $$\Leftrightarrow \frac{1}{2}\mathcal{L}_Xg=\left(\mu+
\frac{\operatorname{(2n-1)Scal_0}}{2(n-1)}-\operatorname{Scal}_0\right)g+
(2-n)\Big(\sqrt{\lambda}\xi^\flat\Big)\otimes\Big(\sqrt{\lambda}\xi^\flat\Big)$$
\end{proof}
\begin{thm}\label{T1}
Let $(M,g)$ be a Riemannian manifold whose curvature tensor satisfies relation \eqref{eq:ricci}; the quadruple $(M,g,X,\mu)$ is a Ricci soliton if and only if
$$\mathcal{L}_XR
=
\left[
X(\lambda)
+
2\lambda(\mu-\lambda\|\xi\|^2)
\right]
(\xi^\flat\otimes\xi^\flat)\owedge g+
\lambda\,
\mathcal{L}_X
(\xi^\flat\otimes\xi^\flat)\owedge g.
$$

\end{thm}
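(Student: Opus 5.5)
Since $R=\lambda\,\eta\owedge g$ with $\eta=\xi^\flat\otimes\xi^\flat$, I compute $\mathcal{L}_XR$ directly. The Lie derivative acts on the function $\lambda$ by $X(\lambda)$ and obeys the Leibniz rule with respect to the Kulkarni--Nomizu product (the Lemma preceding Example \ref{exemple1}). This gives
\[
\mathcal{L}_XR = X(\lambda)\,\eta\owedge g+\lambda\,(\mathcal{L}_X\eta)\owedge g+\lambda\,\eta\owedge(\mathcal{L}_Xg),
\]
which holds for every vector field $X$. The statement therefore reduces to identifying the last term $\lambda\,\eta\owedge\mathcal{L}_Xg$ with $2\lambda(\mu-\lambda\|\xi\|^2)\,\eta\owedge g$.

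For the direct implication, I assume $(M,g,X,\mu)$ is a Ricci soliton and use relation \eqref{e}:
\[
\mathcal{L}_Xg=2(2-n)\lambda\,\eta+2(\mu-\lambda\|\xi\|^2)g .
\]
Substituting this and using bilinearity gives
\[
\eta\owedge\mathcal{L}_Xg=2(2-n)\lambda\,\eta\owedge\eta+2(\mu-\lambda\|\xi\|^2)\,\eta\owedge g .
\]
The key step is the observation that $\eta\owedge\eta=0$ for a rank-one form $\eta=\alpha\otimes\alpha$. From \eqref{eq:KN}, $(\eta\owedge\eta)_{ijkl}=2(\alpha_i\alpha_l\alpha_j\alpha_k-\alpha_i\alpha_k\alpha_j\alpha_l)=0$. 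Combining the terms then yields exactly the stated formula.

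For the converse, I assume the displayed identity holds. Subtracting it from the general Leibniz expansion gives
\[
\lambda\,\eta\owedge\bigl(\mathcal{L}_Xg-2(\mu-\lambda\|\xi\|^2)g\bigr)=0 .
\]
Set $h=\mathcal{L}_Xg-2(\mu-\lambda\|\xi\|^2)g$. Since $\eta\owedge\eta=0$, the conclusion one actually wants is $h=2(2-n)\lambda\,\eta$, which is equivalent to \eqref{e}.

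The main obstacle is that the map $h\mapsto\eta\owedge h$ has a nontrivial kernel, so $h$ cannot simply be cancelled. I would work on $M_\xi$ with $\lambda\neq0$, in an adapted orthonormal frame $e_1=u$, $e_2,\dots,e_n\in\xi^\perp$ as in the Remark on sectional curvature. There, $(\eta\owedge h)_{1ab1}=\|\xi\|^2h_{ab}$ for $a,b\geq2$, and the components $(\eta\owedge h)_{1a11}$ vanish identically. Hence the condition $\eta\owedge h=0$ forces exactly $h|_{\xi^\perp\times\xi^\perp}=0$, while the components $h(\xi,\cdot)$ remain free. So the kernel is $\{\alpha\odot\xi^\flat\}$, and the vanishing only determines $h$ modulo $\xi^\flat\odot(\cdot)$.

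In particular, the converse does \emph{not} follow from the identity alone. Closing it requires an additional input, for instance the mixed and $\xi\xi$ components of $\mathcal{L}_Xg$, i.e. $(\mathcal{L}_Xg)(\xi,Y)=0$ for $Y\perp\xi$ and $(\mathcal{L}_Xg)(u,u)=2\mu+2(1-n)\lambda\|\xi\|^2$. Lacking that, the honest plan is:
\begin{itemize}
\item prove the direct implication fully;
\item for the converse, obtain the tangential part of \eqref{e} on $\xi^\perp$;
\item state that this yields the Ricci soliton equation exactly when the $\xi$-components of $\mathcal{L}_Xg$ agree with \eqref{e}.
\end{itemize}
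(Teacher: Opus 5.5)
Your forward direction is the paper's proof. You expand $\mathcal{L}_X(\lambda\,\eta\owedge g)$ by the Leibniz rule, substitute \eqref{e}, and discard $\eta\owedge\eta$; you verify $\eta\owedge\eta=0$ in components, where the paper only appeals to rank one. That is all the paper proves: despite the ``if and only if'', its proof contains no argument for the converse.

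Your diagnosis of the converse is correct, and it can be sharpened: the converse is false, so no extra argument could close it. The identity is equivalent to $\lambda\,\eta\owedge h=0$ with $h=\mathcal{L}_Xg-2(\mu-\lambda\|\xi\|^2)g$. Since the kernel of $h\mapsto\eta\owedge h$ is exactly $\{\xi^\flat\odot\beta\}$, the identity recovers only the $\xi^\perp\times\xi^\perp$ block of \eqref{e}. Here are two explicit failures.
\begin{itemize}
\item \emph{Flat case.} If $\lambda\equiv0$ (flat $\mathbb{R}^n$), the identity holds for every $X$ and $\mu$. With $X=0$, $\mu=1$ the soliton equation would say $0=g$.
\item \emph{Case $\lambda\neq0$.} Take $X=0$ on any manifold satisfying \eqref{eq1} with $\lambda\|\xi\|^2\equiv c\neq0$ constant, and set $\mu=c$. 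Both sides of the identity vanish, while the soliton equation becomes $\operatorname{Ric}=c\,g$. This is impossible for $n\ge3$, since $\operatorname{Ric}$ has the two distinct eigenvalues $(n-1)\kappa$ and $\kappa$.
\end{itemize}
Such manifolds exist in dimension $3$. Take the round unit $S^3$ with its Hopf fibres rescaled by $\epsilon$, $\epsilon^2=4/3$. Its Ricci eigenvalues are $2\epsilon^2=8/3$ on the fibre direction $e_1$ and $4-2\epsilon^2=4/3$ on $e_1^\perp$. In dimension $3$ this gives $R=P\owedge g$ with Schouten tensor $P=\frac43\,e_1^\flat\otimes e_1^\flat$, that is, \eqref{eq1} with $\xi=e_1$ and $\lambda=\frac43$. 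So your plan is the correct form of the result: prove the forward implication, and state that the converse holds exactly when the $\xi$-components of $\mathcal{L}_Xg$ also agree with \eqref{e}.

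One caution, inherited from the paper. Solving \eqref{e0} with $\operatorname{Ric}$ as printed in \eqref{eq:ricci} gives $\mathcal{L}_Xg=2[(n-2)\lambda\eta+(\mu+\lambda\|\xi\|^2)g]$, not \eqref{e}. Relation \eqref{e} is nevertheless right. Tracing \eqref{eq1} with the usual $\operatorname{Ric}(Y,Z)=\sum_iR(e_i,Y,Z,e_i)$ under the stated conventions gives $\operatorname{Ric}=\lambda[(n-2)\eta+\|\xi\|^2g]$, the opposite sign to \eqref{eq:ricci}. Since your forward direction rests entirely on \eqref{e}, derive it from \eqref{eq1} in this way rather than quoting it.
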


\begin{proof}
Assume that $(M,g,X,\mu)$ is a Ricci soliton; we thus directly obtain relation \eqref{e}.

Since $R$ is given by relation \eqref{eq1}, the Lie derivative yields \begin{equation}\label{R}
    \mathcal{L}_XR=X(\lambda)\eta\owedge g+
\lambda(\mathcal{L}_X\eta)\owedge g+\lambda\eta\owedge\mathcal{L}_Xg.
\end{equation}
For this reason 
$\mathcal{L}_XR=X(\lambda)\eta\owedge g+
\lambda(\mathcal{L}_X\eta)\owedge g+2\lambda\eta\owedge
\left[
\lambda(2-n)\eta
+
(\mu-\lambda\|\xi\|^2)g
\right].$  

 $$\text{Thus,}\quad\mathcal{L}_XR=X(\lambda)\eta\owedge g+\lambda(\mathcal{L}_X\eta)\owedge g+
2\lambda^2(2-n)\eta\owedge\eta+2\lambda(\mu-\lambda\|\xi\|^2)\eta\owedge g.$$
Now, $\eta\owedge\eta=0,$
since $\eta=\xi^\flat\otimes\xi^\flat$
is a symmetric tensor of rank $1$. So it comes $$\mathcal{L}_XR=\left[
X(\lambda)+2\lambda(\mu-\lambda\|\xi\|^2)
\right]\eta\owedge g+\lambda(\mathcal{L}_X\eta)\owedge g.$$
Returning to $\eta=\xi^\flat\otimes\xi^\flat,$
we finally obtain
$$\mathcal{L}_XR=\left[X(\lambda)+2\lambda(\mu-\lambda\|\xi\|^2)\right]
(\xi^\flat\otimes\xi^\flat)\owedge g+\lambda\mathcal{L}_X(\xi^\flat\otimes\xi^\flat)\owedge g.$$
\end{proof}

\begin{prop}\label{P1}
Let $(M^n,g)$ be a Riemannian manifold of dimension $n\geq 3$ that is not Ricci-flat.
Suppose that the curvature tensor of $g$ is given by the relation \eqref{eq1}.

If the quadruple $(M,g,X,\mu)$ is a Ricci soliton and $\mathcal{L}_XR=0\quad(
\mathcal{L}_X\operatorname{Ric}=0),$
then $(M,g,X,\mu)$ becomes stationary. Furthermore,

$$\mathcal{L}_X\eta
=
-\left(
\frac{X(\lambda)}{\lambda}
-2\lambda\|\xi\|^2
\right)\eta,\quad
X(\lambda\|\xi\|^2)=2\lambda^2\|\xi\|^4\qquad \text{and}\qquad
X(\operatorname{Scal})
=
-\frac{1}{n-1}\operatorname{Scal}^2$$
and on any open set where $\operatorname{Scal}\neq0$, we therefore have $
X\left(\frac{1}{\operatorname{Scal}}\right)
=
\frac{1}{n-1}.$
\end{prop}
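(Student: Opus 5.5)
The plan is to combine Theorem \ref{T1} with the injectivity of the Kulkarni--Nomizu map $h\mapsto h\owedge g$ when $n\geq 3$. This gives $\mathcal{L}_X\eta$ explicitly. Substituting it into $\mathcal{L}_X\operatorname{Ric}=0$, computed from \eqref{eq:ricci}, then forces $\mu=0$ and yields the remaining identities. I read the hypothesis "$\mathcal{L}_XR=0$ ($\mathcal{L}_X\operatorname{Ric}=0$)" as giving both vanishing conditions, and I use both. Throughout, $\eta=\xi^\flat\otimes\xi^\flat$.

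\emph{Step 1 (injectivity lemma).} For a symmetric $(0,2)$-tensor $h$ with $h\owedge g=0$ and $n\geq3$, I will show $h=0$. Contracting \eqref{eq:KN} once with $g^{jk}$ gives $(n-2)h+(\operatorname{tr}_g h)\,g=0$. Tracing this gives $2(n-1)\operatorname{tr}_g h=0$, so $\operatorname{tr}_g h=0$ and hence $h=0$.

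\emph{Step 2 (formula for $\mathcal{L}_X\eta$).} Since $(M,g,X,\mu)$ is a Ricci soliton, Theorem \ref{T1} together with $\mathcal{L}_XR=0$ gives
$$\Big(\big[X(\lambda)+2\lambda(\mu-\lambda\|\xi\|^2)\big]\eta+\lambda\,\mathcal{L}_X\eta\Big)\owedge g=0.$$
By Step 1, $\lambda\,\mathcal{L}_X\eta=-\big[X(\lambda)+2\lambda(\mu-\lambda\|\xi\|^2)\big]\eta$. Here $\lambda\neq0$, since otherwise \eqref{eq:ricci} would make $(M,g)$ Ricci-flat. This is the claimed formula once $\mu=0$ is established.

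\emph{Step 3 (stationarity and the scalar identities).} Differentiating \eqref{eq:ricci} gives
$$\mathcal{L}_X\operatorname{Ric}=(2-n)X(\lambda)\,\eta+(2-n)\lambda\,\mathcal{L}_X\eta-X(\lambda\|\xi\|^2)\,g-\lambda\|\xi\|^2\,\mathcal{L}_Xg.$$
Into this I substitute $\mathcal{L}_X\eta$ from Step 2 and $\mathcal{L}_Xg$ from \eqref{e}. After cancellation, the $\eta$-coefficient should reduce to $-2(2-n)\lambda\mu$. The $g$-coefficient should be $-X(\lambda\|\xi\|^2)-2\lambda\|\xi\|^2(\mu-\lambda\|\xi\|^2)$.

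Setting $\mathcal{L}_X\operatorname{Ric}=0$, I compare coefficients. On $M_\xi$ the tensor $\eta$ has rank one while $g$ has rank $n\geq3$, so the two coefficients vanish separately. Since $n\neq2$ and $\lambda\neq0$, this forces $\mu=0$, so the soliton is stationary. It then gives $X(\lambda\|\xi\|^2)=2\lambda^2\|\xi\|^4$.

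Finally, \eqref{eq:scalar} gives $\operatorname{Scal}=-2(n-1)\lambda\|\xi\|^2$. Hence $X(\operatorname{Scal})=-4(n-1)\lambda^2\|\xi\|^4=-\operatorname{Scal}^2/(n-1)$. Where $\operatorname{Scal}\neq0$, this gives $X(1/\operatorname{Scal})=-X(\operatorname{Scal})/\operatorname{Scal}^2=1/(n-1)$.

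The main obstacle is the linear-independence argument in Steps 1 and 3. It needs $n\geq3$ and $\xi\neq0$, so it is valid on $M_\xi$ and extends by continuity. It also needs care because $\mathcal{L}_X\operatorname{Ric}=0$ does not follow automatically from $\mathcal{L}_XR=0$, since the contraction involves $g$. I will therefore invoke it as a separate hypothesis, as the parenthetical in the statement does.
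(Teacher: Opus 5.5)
Your proposal is correct and follows the paper's proof step for step. You use Theorem \ref{T1} and the injectivity of $h\mapsto h\owedge g$ for $n\geq3$ to obtain $\mathcal{L}_X\eta$, then substitute into $\mathcal{L}_X\operatorname{Ric}$ and separate the $\eta$- and $g$-components to get $\mu=0$, $X(\lambda\|\xi\|^2)=2\lambda^2\|\xi\|^4$ and the scalar-curvature identities, treating $\mathcal{L}_XR=0$ and $\mathcal{L}_X\operatorname{Ric}=0$ as separate hypotheses just as the paper does. The only differences are cosmetic: you prove the injectivity lemma that the paper merely asserts, and your $\eta$-coefficient $-2(2-n)\lambda\mu$ has the correct sign where the paper writes $-2(n-2)\lambda\mu$, which does not matter since that coefficient is set to zero.
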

\begin{proof}
Suppose that $(M,g,X,\mu)$ is a Ricci soliton. According to Theorem \ref{T1},
we have: $$
\mathcal{L}_XR=\left[X(\lambda)+2\lambda(\mu-\lambda\|\xi\|^2)\right]\eta\owedge g+\lambda(\mathcal{L}_X\eta)\owedge g.$$
By the bilinearity of the Kulkarni--Nomizu product,
$$\mathcal{L}_XR=\left\{\lambda\mathcal{L}_X\eta+\left[X(\lambda)+
2\lambda(\mu-\lambda\|\xi\|^2)\right]\eta\right\}\owedge g.$$
Since $n\geq3$, the map $\zeta\longmapsto \zeta\owedge g$
is injective on the space of symmetric tensors. The hypothesis
$\mathcal{L}_XR=0$ thus yields$$\lambda\mathcal{L}_X\eta
+
\left[
X(\lambda)
+
2\lambda(\mu-\lambda|\xi|^2)
\right]\eta
=
0.$$
Since $\lambda\neq0$,

\begin{equation}\label{proportionelle}
\mathcal{L}_X\eta=-\left[\frac{X(\lambda)}{\lambda}+2(\mu-\lambda\|\xi\|^2)\right]\eta.
\end{equation}

Let us now compute $\mathcal{L}_X\operatorname{Ric}$. We have
$\operatorname{Ric}=\lambda[(2-n)\eta-\|\xi\|^2g],$
whence
$$\mathcal{L}_X\operatorname{Ric}=X(\lambda)[(2-n)\eta-\|\xi\|^2g]+\lambda(2-n)\mathcal{L}_X\eta-\lambda X(\|\xi\|^2)g
-\lambda\|\xi\|^2\mathcal{L}_Xg.$$ 
Substituting \eqref{e} and \eqref{proportionelle}, we obtain:
\begin{align*}
    \mathcal{L}_X\operatorname{Ric}
&=X(\lambda)[(2-n)\eta-\|\xi\|^2g]
+\lambda(n-2)\left[\frac{X(\lambda)}{\lambda}+2(\mu-\lambda\|\xi\|^2)
\right]\eta-\lambda X(\|\xi\|^2)g\\
&-2\lambda\|\xi\|^2\left[\lambda
(2-n)\eta +\left(\mu-\lambda\|\xi\|^2\right)g
\right].
\end{align*} By simplifying, we obtain
$$\mathcal{L}_X\operatorname{Ric}
=-2(n-2)\lambda\mu\eta-\left[X(\lambda)\|\xi\|^2+\lambda X(\|\xi\|^2)+2\lambda\|\xi\|^2
(\mu-\lambda\|\xi\|^2)
\right]g.$$
Since $\mathcal{L}_X\operatorname{Ric}=0,$
and given that $n\geq3$, $\lambda\neq0$, and $\eta$ is not proportional
to $g$, the two components must vanish separately. The component
along $\eta$ yields $-2(n-2)\lambda\mu=0.$
Consequently, $\mu=0.$ The soliton is therefore stationary.

The component along $g$ then becomes $X(\lambda)\|\xi\|^2+\lambda X(\|\xi\|^2)
-2\lambda^2\|\xi\|^4=0.$

Or $X(\lambda\|\xi\|^2)=X(\lambda)\|\xi\|^2
+\lambda X(\|\xi\|^2).$ Thus, $X(\lambda\|\xi\|^2)=2\lambda^2\|\xi\|^4.$

Finally, by contraction of the Ricci tensor, $\operatorname{Scal}
=-2(n-1)\lambda\|\xi\|^2.$
Differentiating with respect to $X$, we obtain $X(\operatorname{Scal})
=-2(n-1)X(\lambda\|\xi\|^2).$
Therefore,  $X(\operatorname{Scal})
=-4(n-1)\lambda^2\|\xi\|^4.$
Since $\operatorname{Scal}^2
=
4(n-1)^2\lambda^2\|\xi\|^4.$
We deduce $X(\operatorname{Scal})
=
-\frac{1}{n-1}\operatorname{Scal}^2.$

On the open set where  $\operatorname{Scal}\neq0$, so $X\left(\frac{1}{\operatorname{Scal}}\right)
=-\frac{X(\operatorname{Scal})}{\operatorname{Scal}^2}
=\frac{1}{n-1}.$
\end{proof}

\begin{cor}
Under the hypotheses of Proposition \ref{P1}, if $\gamma$ is an integral curve of $X$, then on any open set where $\operatorname{Scal}(\gamma(t)) \neq 0$, we have $\operatorname{Scal}(\gamma(t)) = \frac{n-1}{t+a}$ for some $a \in \mathbb{R}$.
\end{cor}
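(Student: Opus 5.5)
The plan is to reduce the statement to a scalar ODE along the integral curve and integrate it. Let $\gamma$ be an integral curve of $X$, so $\gamma'(t)=X_{\gamma(t)}$. Set $s(t)=\operatorname{Scal}(\gamma(t))$. By the chain rule, $s'(t)=d\operatorname{Scal}(\gamma'(t))=X(\operatorname{Scal})(\gamma(t))$. Proposition \ref{P1} gives $X(\operatorname{Scal})=-\frac{1}{n-1}\operatorname{Scal}^2$. So along $\gamma$ we get the Riccati-type ODE
$$s'(t)=-\frac{1}{n-1}s(t)^2 .$$

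Next, restrict to an interval $J$ of parameters on which $s(t)\neq0$; this is the set of $t$ with $\gamma(t)$ in the open set where $\operatorname{Scal}\neq0$. On $J$ consider $\sigma=1/s$. The last identity of Proposition \ref{P1}, $X(1/\operatorname{Scal})=\frac{1}{n-1}$, gives $\sigma'(t)=\frac{1}{n-1}$ directly. One can also compute $\sigma'=-s'/s^2=\frac{1}{n-1}$ by hand. Integrating gives $\sigma(t)=\frac{t+a}{n-1}$ for some constant $a\in\mathbb{R}$, and hence
$$s(t)=\frac{n-1}{t+a}.$$
Nonvanishing of $s$ on $J$ guarantees $t+a\neq0$ there.

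The only delicate point is connectedness. The constant of integration is unique only on each connected interval of parameters where $s\neq0$. I will therefore state the conclusion on each such connected component (an open interval), with $a$ possibly depending on the component. A uniqueness argument for the ODE $s'=-s^2/(n-1)$ shows a solution vanishing at one point vanishes identically. So if $s\neq0$ somewhere on the maximal domain of $\gamma$, it is nonzero on the whole connected domain, and a single constant $a$ works. I expect no real obstacle beyond this bookkeeping.
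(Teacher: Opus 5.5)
Your argument is correct and is essentially the paper's proof: restrict the identity $X(1/\operatorname{Scal})=\frac{1}{n-1}$ from Proposition~\ref{P1} to the integral curve $\gamma$, integrate to get $1/\operatorname{Scal}(\gamma(t))=\frac{t+a}{n-1}$, and invert. Your extra bookkeeping is a valid refinement that the paper's proof leaves implicit: the constant $a$ is a priori fixed only on each connected parameter interval where $\operatorname{Scal}\circ\gamma\neq0$, and ODE uniqueness for $s'=-s^2/(n-1)$ shows $\operatorname{Scal}\circ\gamma$ is either identically zero or nowhere zero, so one constant suffices.
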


\begin{proof}
By Proposition \ref{P1}, $X\left(\frac{1}{\operatorname{Scal}}\right) = \frac{1}{n-1}$.

Along an integral curve $\gamma$ of $X$,
$$\frac{d}{dt} \left( \frac{1}{\operatorname{Scal}(\gamma(t))} \right) = X\left(\frac{1}{\operatorname{Scal}}\right) \bigg|_{\gamma(t)} = \frac{1}{n-1}.$$
Integrating yields $\frac{1}{\operatorname{Scal}(\gamma(t))} = \frac{t}{n-1} + C$,
whence $\operatorname{Scal}(\gamma(t)) = \frac{n-1}{t+(n-1)C}$. It suffices to set $a=(n-1)C$.
\end{proof}

\begin{prop}
Let $f$ be a smooth function on the manifold $(M,g)$ of dimension $n \geq 2$. If the quadruple $(M,g,\nabla f,\mu)$ is a gradient Ricci soliton such that:
\begin{enumerate}
    \item $f$ is harmonic, then $\operatorname{div}\left(\frac{\xi^\flat}{\|\xi\|^2}\right)\xi^\flat = -\frac{\nabla_\xi\xi^\flat}{\|\xi\|^2}$;
 \item If $f$ is Lipschitz, then the function $(x_1, \dots, x_n) \mapsto \lambda(x_1, \dots, x_n)\|\xi(x_1, \dots, x_n)\|^2$ is bounded if and only if the soliton is stationary, or if $f$ is bounded, or if $f$ is positive and bounded above, or if $f$ is negative and bounded below.
\end{enumerate}
\end{prop}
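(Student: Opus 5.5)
The plan is to take the trace of the gradient soliton equation and then use Lemma~\ref{L1}. With $X=\nabla f$ we have $\mathcal{L}_{\nabla f}g=2\nabla^2 f$, so \eqref{e} becomes
\[
\nabla^2 f=\lambda(2-n)\xi^\flat\otimes\xi^\flat+(\mu-\lambda\|\xi\|^2)g .
\]
Tracing with $g$ gives
\[
\Delta f=n\mu+2(1-n)\lambda\|\xi\|^2=n\mu+\operatorname{Scal}=n\mu+2(n-1)\kappa .
\]
Everything below is read off this scalar identity.

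\textbf{Item 1.} If $f$ is harmonic, then $\lambda\|\xi\|^2=\frac{n\mu}{2(n-1)}$ is constant, so $\mathrm{d}\kappa=0$. Lemma~\ref{L1} (for $n\ge3$) then gives
\[
\operatorname{div}(\lambda\xi^\flat)\xi^\flat+\lambda\nabla_\xi\xi^\flat=0 .
\]
Write $\lambda=c/\|\xi\|^2$ with $c=-\kappa$ constant, on $M_\xi$. Then
\[
\operatorname{div}(\lambda\xi^\flat)=c\,\operatorname{div}\!\left(\xi^\flat/\|\xi\|^2\right).
\]
Dividing by $c$ yields the stated relation $\operatorname{div}(\xi^\flat/\|\xi\|^2)\xi^\flat=-\nabla_\xi\xi^\flat/\|\xi\|^2$, provided $c\neq0$. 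The case $c=0$ is the Ricci-flat case, which is excluded by the setting or is trivial. The expected gap is $n=2$, where Lemma~\ref{L1} is unavailable. There I would note that $\kappa$ constant still makes the Bianchi argument degenerate, and restrict the claim to $n\ge3$, or accept the statement as it is written in the paper.

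\textbf{Item 2.} Here I use the boundedness of $\Delta f$. The identity $\lambda\|\xi\|^2=\frac{n\mu-\Delta f}{2(n-1)}$ shows that $\lambda\|\xi\|^2$ is bounded if and only if $\Delta f$ is bounded. The plan is to translate each listed alternative into boundedness of $\Delta f$:
\begin{itemize}
\item In the stationary case $\mu=0$, the quantity $\lambda\|\xi\|^2$ is a fixed multiple of $\Delta f$.
\item If $f$ is Lipschitz, $\nabla f$ is bounded. Combined with the Hessian identity, this controls $\Delta f$ along integral curves of $\nabla f$.
\item When $f$ is bounded, or is semi-bounded with the appropriate sign, I would invoke the standard gradient soliton identity $\operatorname{Scal}+|\nabla f|^2-2\mu f=\text{const}$. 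Since $|\nabla f|$ and $f$ are then bounded, $\operatorname{Scal}$ is bounded, and hence so is $\lambda\|\xi\|^2$.
\end{itemize}

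The main obstacle is exactly that identity and the converse directions. The identity comes from contracting the second Bianchi identity against the Hessian equation. The converse directions are only loosely formulated in the statement. I would prove the implications that give boundedness of $\operatorname{Scal}=-2(n-1)\lambda\|\xi\|^2$ from the listed hypotheses, using the constant-sum identity. For the sign cases, the inequalities $-2\mu f\le C$ or $-2\mu f\ge -C$ bound one side of $\operatorname{Scal}$, and the other side comes from $|\nabla f|^2\le L^2$.
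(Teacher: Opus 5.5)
Your route is the paper's. For the first item you trace the soliton equation and feed the constancy of $\kappa$ into Lemma~\ref{L1}. For the second you use Hamilton's identity $\operatorname{Scal}+|\nabla f|^2-2\mu f=b$ (on connected $M$) together with $|\nabla f|\le L$. Several details need repair, though.

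The trace of $\operatorname{Ric}+\nabla^2 f=\mu g$ is $\Delta f=n\mu-\operatorname{Scal}=n\mu+2(n-1)\lambda\|\xi\|^2$, not $n\mu+\operatorname{Scal}$. You inherited the wrong sign from the paper's display \eqref{e}, whose $\lambda$-terms are sign-flipped relative to \eqref{eq:ricci}. This is harmless: harmonicity still forces $\lambda\|\xi\|^2=-\frac{n\mu}{2(n-1)}$ to be constant.

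More importantly, the two exceptional cases of the first item are neither trivial nor optional; they are counterexamples.
\begin{itemize}
\item If $\mu=0$, then $\lambda\equiv0$, so $M$ is flat and \eqref{eq1} holds for \emph{every} $\xi$. For a unit field, pairing the conclusion with $\xi$ forces $\operatorname{div}\xi=0$ and $\nabla_\xi\xi=0$. This fails, e.g., for the unit angular field on $(\mathbb{R}^2\setminus\{0\})\times\mathbb{R}^{n-2}$ with $f$ constant.
\item If $n=2$, algebraic curvature tensors form a line, so every surface satisfies \eqref{eq1} for every nonvanishing $\xi$. On a surface of constant nonzero curvature, take $f$ constant and $\mu$ the Einstein constant; non-geodesic unit fields then violate the conclusion.
\end{itemize}
So you must add the hypotheses $n\ge3$ and $\mu\neq0$; you cannot fall back on the statement as written. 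The paper's proof divides by $\mu$ and invokes Lemma~\ref{L1} at $n=2$ without comment, so it has the same two gaps.

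In the second item, the reformulation through $\Delta f$ and the remark about controlling $\Delta f$ along integral curves of $\nabla f$ do no work. A bound on $|\nabla f|$ gives no bound on $\Delta f$. Every case, including the stationary one where $\operatorname{Scal}=b-|\nabla f|^2\in[b-L^2,b]$, must go through Hamilton's identity, exactly as in the paper.

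Your mechanism for the sign cases is also wrong. Since $\operatorname{Scal}=b-|\nabla f|^2+2\mu f$, a one-sided bound on $\mu f$ yields only a one-sided bound on $\operatorname{Scal}$, and $|\nabla f|^2\le L^2$ cannot supply the other side. This does not hurt here, because ``positive and bounded above'' and ``negative and bounded below'' already give $|f|\le p$; they are simply subcases of $f$ bounded.

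Finally, the converse is not ill-posed; it follows in one line from the same identity. If $\lambda\|\xi\|^2$ is bounded then so is $\operatorname{Scal}$, hence $2\mu f=\operatorname{Scal}+|\nabla f|^2-b$ is bounded, so either $\mu=0$ or $f$ is bounded. The paper omits this direction too, so adding it gives a complete proof of the stated equivalence.
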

\begin{proof}
    Consider a smooth function $f$ on a manifold $(M,g)$ of dimension $n \geq 2$ such that $(M,g,\nabla f,\mu)$ is a gradient Ricci soliton.
    \begin{enumerate}
        \item Suppose that $f$ is harmonic; then equation \eqref{e} becomes $\nabla^2f=\lambda\left[(2-n)\xi^\flat\otimes\xi^\flat\right] + \left(\mu-\lambda\|\xi\|^2\right)g$. Applying the trace operator yields $n\mu+2(1-n)\lambda|\xi|^2=0$, which implies $\kappa=-\lambda\|\xi\|^2=\frac{\mu n}{2(1-n)}$; thus, Lemma \ref{L1} becomes $0=\operatorname{div}\left(\lambda\xi^\flat\right)\xi^\flat+\lambda\nabla_\xi\xi^\flat$. Substituting the expression for $\lambda$ yields $$\operatorname{div}\left(\frac{\xi^\flat}{\|\xi\|^2}\right)\xi^\flat=-\frac{\nabla_\xi\xi^\flat}{\|\xi\|^2}$$
        \item Suppose that the function $f$ is Lipschitz.
        
    According to Hamilton's identity, there exists a real constant $b$ such that: $$ \operatorname{Scal}= b- \vert{}\nabla f\vert{}^2 + 2\mu f.$$ Since $f$ is Lipschitz, there exists a positive real constant $M$ such that $\|\nabla f\|\leq M$; thus, the triangle inequality yields $$|\operatorname{Scal}|\leq |b|+M^2+2|\mu f|.$$ As $\operatorname{Scal}=-2(n-1)\lambda\|\xi\|^2$, we have \begin{equation}\label{H}
    \left|\lambda\|\xi\|^2\right|\leq \frac{1}{2(n-1)}\left(|b|+M^2+2|\mu f|\right)
\end{equation}
\begin{enumerate}
    \item If $\mu=0$, then inequality \eqref{H} becomes $\left|\lambda\|\xi\|^2\right|\leq \frac{1}{2(n-1)}\left(|b|+M^2\right)$;
    \item if there exists a positive real constant $p$ such that $|f|\leq p$, then inequality \eqref{H} becomes: $\left|\lambda\|\xi\|^2\right|\leq \frac{1}{2(n-1)}\left(|b|+M^2+2p|\mu|\right)$;
    \item if $f$ is positive and bounded above by a positive real constant $p_1$, then inequality \eqref{H} becomes: $\left|\lambda\|\xi\|^2\right|\leq \frac{1}{2(n-1)}\left(|b|+M^2+2p_1|\mu|\right)$;
     \item if $f$ is negative and bounded below by a negative real constant $p_2$, then inequality \eqref{H} becomes: $\left|\lambda\|\xi\|^2\right|\leq \frac{1}{2(n-1)}\left(|b|+M^2-2p_2|\mu|\right)$.
\end{enumerate} 
\end{enumerate}
\end{proof} 
\begin{prop}
Let $(M^n,g)$, $n\geq2$, be a Riemannian variety satisfying the relation \eqref{eq1}. Then
 $\xi$ is not a closed conformal vector field.
\end{prop}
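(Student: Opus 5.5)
The plan is to argue by contradiction on the open set $M_\xi$. Suppose $\xi$ is closed conformal, i.e. $\nabla_Y\xi=\varphi\,Y$ for all $Y$ and some function $\varphi$. We compare the curvature this forces with the curvature prescribed by \eqref{eq1}, and we assume throughout (as in Theorem 3.2 and Proposition \ref{P1}) that $\lambda\neq0$, i.e. $R\neq0$.

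\emph{Step 1: curvature applied to $\xi$.} From $\nabla\xi=\varphi\,\mathrm{Id}$, the convention \eqref{Rr} gives $R(X,Y)\xi=X(\varphi)Y-Y(\varphi)X$. On the other side, we expand \eqref{eq1} with \eqref{produit}. Two of the four terms cancel, leaving
$$R(X,Y)\xi=\lambda\|\xi\|^2\big(g(\xi,Y)X-g(\xi,X)Y\big).$$
Taking $X,Y$ linearly independent and comparing coefficients yields $\mathrm{d}\varphi=\kappa\,\xi^\flat$ with $\kappa=-\lambda\|\xi\|^2$. This is only a compatibility condition and is not yet a contradiction, so the information must come from the curvature of $\xi^\perp$.

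\emph{Step 2: local warped-product structure.} A closed conformal field with $\xi\neq0$ has integrable orthogonal distribution $\xi^\perp$ (since $\xi^\flat$ is closed), and $\|\xi\|$ and $\varphi$ are constant on its leaves (from $\mathrm{d}\varphi=\kappa\xi^\flat$ and $Y\|\xi\|^2=2\varphi\, g(\xi,Y)$). So locally $g=dt^2+f(t)^2g_N$ with $\xi=c\,f\,\partial_t$, exactly as in Example \ref{exemple1}. By the O'Neill/warped-product formulas:
\begin{itemize}
\item mixed planes have $K(\partial_t,V)=-f''/f$;
\item planes tangent to the leaves have $K(V,W)=\big(K_N(V,W)-f'^2\big)/f^2$.
\end{itemize}

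\emph{Step 3: comparison with the earlier Remark.} By the Remark on sectional curvatures, $K(e_a,e_b)=0$ for $a,b\geq2$ and $K(e_1,e_a)=-\lambda$. For $n\geq3$, the vanishing of leafwise planes forces $K_N\equiv f'(t)^2$ for every $t$. The left side is independent of $t$, so $f'$ is constant and $f''=0$. Then $-\lambda=K(e_1,e_a)=-f''/f=0$, contradicting $\lambda\neq0$. Equivalently, one can avoid coordinates by differentiating $\mathrm{d}\varphi=\kappa\xi^\flat$ and using the Gauss equation for the totally umbilic leaves (second fundamental form $\varphi/\|\xi\|$).

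\emph{Main obstacle: the case $n=2$.} Here there are no leafwise planes, and every surface satisfies \eqref{eq1} with $\lambda=-K/\|\xi\|^2$. For instance, $dt^2+f^2dx^2$ with $\xi=f\partial_t$ is closed conformal. So for $n=2$ the argument above gives no contradiction, and the statement as written appears to need $n\geq3$ (or an extra hypothesis). I would first try to extract something from Lemma \ref{L1}, but that lemma itself requires $n\geq3$. I therefore expect to have to restrict the proposition to $n\geq3$, or to interpret it in the sense of Theorem 3.2 (non-flat, $n\geq3$).
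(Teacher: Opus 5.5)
Your argument is correct for $n\geq3$ and $R\not\equiv0$, and it takes a genuinely different route from the paper's, departing from it exactly where the paper's proof breaks down. The paper also starts from $R(X,Y)\xi=X(\varphi)Y-Y(\varphi)X$. It then asserts that \eqref{eq1} by itself gives $R(X,Y)\xi=0$. In expanding $\bigl[(\xi^\flat\otimes\xi^\flat)\owedge g\bigr](X,Y,\xi,Z)$, it writes the third term as $-g(\xi,X)g(\xi,Y)g(\xi,Z)$ instead of $-\|\xi\|^2g(\xi,X)g(Y,Z)$, and even the four terms as written do not cancel. It then contracts to $\operatorname{Ric}(\xi,\cdot)=0$ and contradicts \eqref{eq:ricci}. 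The closed-conformal hypothesis plays no role in obtaining $R(\cdot,\cdot)\xi=0$. So that argument would force $\lambda=0$ on every manifold satisfying \eqref{eq1}, for instance Example~\ref{exemple1} with $\mathfrak f''\neq0$, and it does not establish the proposition.

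Your Step~1 computes the correct value $\lambda\|\xi\|^2\bigl(g(\xi,Y)X-g(\xi,X)Y\bigr)$. You correctly observe that the comparison yields only $\mathrm d\varphi=\kappa\,\xi^\flat$, which is not contradictory by itself. The obstruction must therefore come from how the leafwise curvature propagates along $\xi$, and your Steps~2--3 supply exactly that. The vanishing of $K(e_a,e_b)$ forces $K_N=f'^2$; since $K_N$ is independent of $t$, this gives $f''=0$ and hence $\lambda\|\xi\|^2=0$ on $M_\xi$. The price is an appeal to the standard local warped-product structure of closed conformal fields, together with the need for leaves of dimension at least $2$. That is what makes the proof valid where the paper's pointwise shortcut is not.

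Your assessment of the hypotheses is also right. For $n=2$ the statement is false: on $dt^2+e^{2t}dx^2$ the field $\xi=e^t\partial_t$ satisfies $\nabla_Y\xi=e^tY$, and $R=\lambda\,(\xi^\flat\otimes\xi^\flat)\owedge g$ with $\lambda=-e^{-2t}\neq0$. For every $n$ some non-flatness assumption is needed, since the position field on flat $\mathbb R^n$ is closed conformal and \eqref{eq1} holds there with $\lambda=0$.

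One harmless slip is inherited from the paper's Remark on sectional curvatures. With the paper's conventions, $(\xi^\flat\otimes\xi^\flat)\owedge g$ evaluated on $(e_1,e_a,e_a,e_1)$ equals $\|\xi\|^2$. Hence $K(e_1,e_a)=\lambda\|\xi\|^2$ rather than $-\lambda$, and $\lambda=K/\|\xi\|^2$ on surfaces. Since you only use the vanishing of these quantities, nothing in your argument changes.
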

\begin{proof}
Let us assume the condition $\nabla_X\xi=\varphi X$ on $M_\xi$, we therefore have:

$$R(X,Y)\xi
=
\nabla_X\nabla_Y\xi
-\nabla_Y\nabla_X\xi
-\nabla_{[X,Y]}\xi=
\nabla_X(\varphi Y)
-\nabla_Y(\varphi X)
-\varphi[X,Y]=
X(\varphi)Y-Y(\varphi)X.$$
So,
\begin{equation}
R(X,Y)\xi
=
X(\varphi)Y-Y(\varphi)X.
\label{eq:curv-xi-conforme}
\end{equation}

On the other hand, by hypothesis,
\[R
=
\lambda
(\xi^\flat\otimes\xi^\flat)\owedge g.
\]
For all $X,Y,Z\in\mathfrak{X}(M_\xi)$, we have
$$g(R(X,Y)\xi,Z)
=
R(X,Y,\xi,Z)=
\lambda
\bigl[
(\xi^\flat\otimes\xi^\flat)\owedge g
\bigr](X,Y,\xi,Z).$$
However,\begin{align*}
    (\xi^\flat\otimes\xi^\flat)\owedge g
\bigr](X,Y,\xi,Z)&=
g(\xi,X)g(\xi,Z)g(Y,\xi)
+|\xi|^2g(X,Z)g(\xi,Y)\\
&-g(\xi,X)g(\xi,Y)g(\xi,Z)
-g(\xi,Y)g(\xi,Z)g(\xi,X)=0.
\end{align*}
Consequently,
\begin{equation}
R(X,Y)\xi=0.
\label{eq:Rxi-zero}
\end{equation}

Comparing \eqref{eq:curv-xi-conforme} and \eqref{eq:Rxi-zero}, we obtain
\[
X(\varphi)Y-Y(\varphi)X=0,
\qquad \forall X,Y.
\]
Thus,
\[
d\varphi=0.
\]

Furthermore, \eqref{eq:Rxi-zero} implies, upon contraction,
\[
\operatorname{Ric}(\xi,X)=0,
\qquad \forall X.
\]

However, contracting the tensor $R=
\lambda
(\xi^\flat\otimes\xi^\flat)\owedge g$ yields $\operatorname{Ric}=\lambda
\left[
(2-n)\xi^\flat\otimes\xi^\flat
-\|\xi\|^2g
\right].$
Evaluating on $(\xi,X)$, we obtain
\begin{align*}
\operatorname{Ric}(\xi,X)
&=\lambda\left[
(2-n)|\xi|^2g(\xi,X)
-\|\xi\|^2g(\xi,X)
\right]\\
&=
\lambda(1-n)\|\xi\|^2g(\xi,X).
\end{align*}
Since $\operatorname{Ric}(\xi,X)=0$, it follows that
$\lambda(1-n)\|\xi\|^2g(\xi,X)=0,
\qquad \forall X.$
Taking $X=\xi$, we obtain $\lambda(1-n)\|\xi\|^4=0.$ On $M_\xi$, we have $\|\xi\|^2>0$, and since $n\geq2$, $1-n\neq0.$
Consequently, $\lambda=0
\qquad\text{on } M_\xi.$
This is a contradiction.
\end{proof}
\begin{prop}\label{PP1}
  Let $(M^n, g)$, $n \ge2$, be a Riemannian manifold satisfying the relation \eqref{eq:ricci}. Then, for any non-negative integer $m$ and any smooth vector field $X$, we have: \begin{equation}
       \mathcal{L}^{m}_X\operatorname{Ric}= (2-n)
\sum_{j=0}^{m}
\sum_{i=0}^{j}
\binom{m}{j}\binom{j}{i}
X^{m-j}(\lambda)\times
\left(
\mathcal{L}_X^{j-i}\xi^\flat
\otimes
\mathcal{L}_X^i\xi^\flat
\right)+ \mathcal{Z}_mg
   \end{equation} where  $X^{m-j}(\lambda)=\mathcal{L}^{m-j}_X\lambda$, $\mathcal{Z}_m=X(\mathcal{Z}_{m-1})+\mathcal{Z}_0\mathcal{Z}_{m-1}$ for all $m\geq 1$ and $\mathcal{Z}_0=-2\lambda \|\xi\|^2$.
\end{prop}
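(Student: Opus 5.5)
The plan is to split \eqref{eq:ricci} into its two natural pieces, $\operatorname{Ric}=(2-n)\,\lambda\,\xi^\flat\otimes\xi^\flat+\kappa\,g$ with $\kappa=-\lambda\|\xi\|^2$, and to treat each by induction on $m$ using only that $\mathcal{L}_X$ is a derivation on tensor products and satisfies $\mathcal{L}_X(fT)=X(f)T+f\mathcal{L}_XT$. Since $\mathcal{L}_X$ is linear, $\mathcal{L}_X^m\operatorname{Ric}$ is the sum of $\mathcal{L}_X^m$ applied to each piece. The first piece is handled unconditionally; the second piece is where the statement's normalization enters, and I flag below exactly what has to be assumed there.

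\textbf{First piece.} View $\lambda\,\xi^\flat\otimes\xi^\flat$ as a threefold product (function $\times$ 1-form $\times$ 1-form). Apply the Leibniz rule twice: first split $\mathcal{L}_X^m(\lambda\cdot T)=\sum_{j}\binom{m}{j}X^{m-j}(\lambda)\,\mathcal{L}_X^{j}T$ with $T=\xi^\flat\otimes\xi^\flat$, then $\mathcal{L}_X^{j}(\xi^\flat\otimes\xi^\flat)=\sum_i\binom{j}{i}\mathcal{L}_X^{j-i}\xi^\flat\otimes\mathcal{L}_X^i\xi^\flat$. Each of these is the standard binomial Leibniz formula. It is proved by induction on the order using Pascal's rule $\binom{m}{j}+\binom{m}{j-1}=\binom{m+1}{j}$: apply $\mathcal{L}_X$ once more to the level-$m$ expression, reindex the two resulting sums, and merge them. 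Multiplying by the constant $(2-n)$ gives exactly the double sum in the statement. No hypothesis on $X$ is needed for this piece.

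\textbf{Second piece.} I prove by induction that $\mathcal{L}_X^m$ of the $g$-piece has the form $\mathcal{Z}_m g$ with the stated recursion. Assuming $\mathcal{L}_X^{m-1}(\cdot)=\mathcal{Z}_{m-1}g$, one more derivative gives
\[
X(\mathcal{Z}_{m-1})\,g+\mathcal{Z}_{m-1}\mathcal{L}_Xg .
\]
This collapses to $\bigl(X(\mathcal{Z}_{m-1})+\mathcal{Z}_0\mathcal{Z}_{m-1}\bigr)g$ exactly when $\mathcal{L}_Xg=\mathcal{Z}_0\,g$. So the recursion $\mathcal{Z}_m=X(\mathcal{Z}_{m-1})+\mathcal{Z}_0\mathcal{Z}_{m-1}$ encodes the relation $\mathcal{L}_Xg=\mathcal{Z}_0 g=-2\lambda\|\xi\|^2g$, and I would make this identity explicit as the input of the inductive step.

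\textbf{Main obstacle.} For a genuinely arbitrary $X$, $\mathcal{L}_Xg$ has a trace-free part that is not proportional to $g$. That part would feed extra terms $\kappa\,\mathcal{L}_X^{r}g$ into the expansion, and the closed form in the statement cannot absorb them. Two further discrepancies need care:
\begin{itemize}[label=$\bullet$]
\item the case $m=0$ requires the $g$-coefficient to equal $\kappa=-\lambda\|\xi\|^2$, whereas the statement sets $\mathcal{Z}_0=-2\lambda\|\xi\|^2$;
\item the statement uses $\mathcal{Z}_0$ both as the starting value of the coefficient and as the conformal factor, so these two roles must be matched consistently.
\end{itemize}
I would first try to reconcile these by reading the $\mathcal{Z}_m$-term as the full collected $g$-component and tracking where the factor $2$ arises. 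If this fails, the proof as worded only closes once $\mathcal{L}_Xg$ is identified with $\mathcal{Z}_0\,g$ at each step and the initial coefficient is adjusted accordingly. I would state this dependence explicitly rather than hide it.
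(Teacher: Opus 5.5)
Your treatment of the $\xi^\flat\otimes\xi^\flat$ piece is correct and matches the paper's argument. The paper merges the three Leibniz terms in one step via $\binom{m}{j}\binom{j}{i}+\binom{m}{j-1}\binom{j-1}{i}+\binom{m}{j-1}\binom{j-1}{i-1}=\binom{m+1}{j}\binom{j}{i}$, which is your two nested binomial formulas combined. Your diagnosis of the $g$-piece is also correct, and it hits exactly the two unjustified steps in the paper's own proof:
\begin{itemize}
\item the paper rewrites \eqref{eq:ricci} as $\operatorname{Ric}=\lambda(2-n)\xi^\flat\otimes\xi^\flat-2\lambda\|\xi\|^2g$, with a spurious factor $2$;
\item in the inductive step it replaces $\mathcal{L}_Xg$ by $\mathcal{Z}_0g$ (``its initial expression''), although nothing is assumed about $X$.
\end{itemize}

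What is missing is the conclusion. Do not try to reconcile: the statement as written is false, and your write-up should say so with a counterexample. At $m=0$ the claim reads $\operatorname{Ric}=(2-n)\lambda\,\xi^\flat\otimes\xi^\flat-2\lambda\|\xi\|^2g$. Subtracting \eqref{eq:ricci} gives $\lambda\|\xi\|^2g=0$, hence $\lambda\xi=0$ and $\operatorname{Ric}=0$. So it fails on every non-Ricci-flat example, for every $X$.

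Correcting the initial value does not save higher orders. On the hyperbolic plane $g=y^{-2}(dx^2+dy^2)$ one has $\operatorname{Ric}=c\,g$ with a constant $c\neq0$. So \eqref{eq:ricci} holds with $n=2$, $\xi=y\,\partial_y$ (a unit field) and $\lambda=-c$. For the Killing field $X=\partial_x$, $\mathcal{L}_X\operatorname{Ric}=0$, while the right-hand side at $m=1$ is $\mathcal{Z}_1g=\bigl(X(\mathcal{Z}_0)+\mathcal{Z}_0^2\bigr)g=4c^2g$ (or $2c^2g$ with initial value $\kappa$). This also shows the obstruction is not just the trace-free part of $\mathcal{L}_Xg$: a Killing field, whose conformal factor is $0\neq\mathcal{Z}_0$, already breaks the recursion.

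What your two inductions actually prove, for arbitrary $X$, is
\[
\mathcal{L}_X^m\operatorname{Ric}=(2-n)\sum_{j=0}^{m}\sum_{i=0}^{j}\binom{m}{j}\binom{j}{i}X^{m-j}(\lambda)\,\mathcal{L}_X^{j-i}\xi^\flat\otimes\mathcal{L}_X^{i}\xi^\flat+\sum_{r=0}^{m}\binom{m}{r}X^{m-r}(\kappa)\,\mathcal{L}_X^{r}g,\qquad\kappa=-\lambda\|\xi\|^2.
\]
If $\mathcal{L}_Xg=\psi g$, the last sum is $W_mg$ with $W_0=\kappa$ and $W_m=X(W_{m-1})+\psi W_{m-1}$. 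Under the extra hypothesis $\psi=\mathcal{Z}_0$ this gives $W_m=\tfrac12\mathcal{Z}_m$, because the recursion is linear in its initial value. So even then the stated $g$-coefficient is off by a factor $2$ at every order.

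Note finally where the paper invokes this proposition: Theorem~\ref{thm:principal}, with $X$ a Ricci-soliton field. There $\mathcal{L}_Xg=2\mu g-2\operatorname{Ric}$ has a nonzero $\xi^\flat\otimes\xi^\flat$-component for $n\ge3$ wherever $\lambda\xi\neq0$. So only the general form above is available in that application.
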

\begin{proof}
We have  $\operatorname{Ric}=\lambda
\left[
(2-n)\xi^\flat\otimes\xi^\flat
-\|\xi\|^2g
\right]$   so $$\mathcal{L}^m\operatorname{Ric}=\mathcal{L}^m\left(\lambda(2-n)\xi^\flat\otimes\xi^\flat\right)
+\mathcal{L}^m\left(-
2\lambda\|\xi\|^2g\right)$$
 Let us prove by induction on $m\in\mathbb{N}$ that, for any scalar function $\lambda$ and any vector field $\xi$,

\begin{equation}\label{1}
    \mathcal{L}_X^m
\left(
\lambda(2-n)\,\xi^\flat\otimes\xi^\flat
\right)
+\mathcal{L}^m\left(-
2\lambda\|\xi\|^2g\right)=
(2-n)
\sum_{j=0}^{m}
\sum_{i=0}^{j}
\binom{m}{j}\binom{j}{i}
X^{m-j}(\lambda)\times
\left(
\mathcal{L}_X^{j-i}\xi^\flat
\otimes
\mathcal{L}_X^i\xi^\flat
\right)+\mathcal{Z}_mg.
\end{equation}

\begin{enumerate}
    \item For $m=0$, the left-hand side of \eqref{1} is

$$
\lambda(2-n)\,\xi^\flat\otimes\xi^\flat.
-2\lambda|\xi|^2g.$$
On the other hand, the right-hand member becomes

$$
(2-n)
\binom00\binom00
X^0(\lambda)
\left(
\mathcal{L}_X^0\xi^\flat
\otimes
\mathcal{L}_X^0\xi^\flat
\right)+\mathcal{Z}_0g,
$$

that's to say

$$
(2-n)\lambda\,\xi^\flat\otimes\xi^\flat-2\lambda\|\xi\|^2g.
$$

The two sides are therefore equal. The property holds for $m=0$.

\item Suppose that formula \eqref{1} holds for an integer $m \geq 0$. Let us show that it then holds for $m+1$.

Applying $\mathcal{L}_X$ to both sides of \eqref{1}, we obtain

\begin{align*}
     \mathcal{L}_X^{m+1}
\left(
\lambda(2-n)\,\xi^\flat\otimes\xi^\flat
\right)
+\mathcal{L}^{m+1}\left(-
2\lambda|\xi|^2g\right)&=
(2-n)
\sum_{j=0}^{m+1}
\sum_{i=0}^{j}
\binom{m+1}{j}\binom{j}{i}
X^{m+1-j}(\lambda)\\
&\times
\left(
\mathcal{L}_X^{j-i}\xi^\flat
\otimes
\mathcal{L}_X^i\xi^\flat
\right)+\mathcal{Z}_{m+1}g.
\end{align*}

The Lie derivative of a scalar function is given by: $$\mathcal{L}_X f=X(f),$$
and using Leibniz's rule and Leibniz's rule for the tensor product, we have
$$
\begin{aligned}
&\mathcal{L}_X
\left[
X^{m-j}(\lambda)
\left(
\mathcal{L}_X^{j-i}\xi^\flat
\otimes
\mathcal{L}_X^i\xi^\flat
\right)
\right]=
X^{m-j+1}(\lambda)
\left(
\mathcal{L}_X^{j-i}\xi^\flat
\otimes
\mathcal{L}_X^i\xi^\flat
\right)+
X^{m-j}(\lambda)
\left(
\mathcal{L}_X^{j-i+1}\xi^\flat
\otimes
\mathcal{L}_X^i\xi^\flat
\right)\\
&\quad+
X^{m-j}(\lambda)
\left(
\mathcal{L}_X^{j-i}\xi^\flat
\otimes
\mathcal{L}_X^{i+1}\xi^\flat
\right).
\end{aligned}
$$

Thus,
\begin{align*}
\mathcal{L}_X^{m+1}
\left(
\lambda(2-n)\,\xi^\flat\otimes\xi^\flat
\right)&=(2-n)
\sum_{j=0}^{m}
\sum_{i=0}^{j}
\binom{m}{j}\binom{j}{i}
X^{m+1-j}(\lambda)\times
\left(
\mathcal{L}_X^{j-i}\xi^\flat
\otimes
\mathcal{L}_X^i\xi^\flat
\right)\\
&+(2-n)
\sum_{j=0}^{m}
\sum_{i=0}^{j}
\binom{m}{j}\binom{j}{i}
X^{m-j}(\lambda)\times
\left(
\mathcal{L}_X^{j-i+1}\xi^\flat
\otimes
\mathcal{L}_X^i\xi^\flat
\right)\\
+&(2-n)
\sum_{j=0}^{m}
\sum_{i=0}^{j}
\binom{m}{j}\binom{j}{i}
X^{m-j}(\lambda)\times
\left(
\mathcal{L}_X^{j-i}\xi^\flat
\otimes
\mathcal{L}_X^{i+1}\xi^\flat
\right).
\end{align*}
Let us now perform the appropriate changes of indices. For the last two summations in the expression for $\mathcal{L}_X^{m+1} \left( \lambda(2-n)\,\xi^\flat\otimes\xi^\flat \right)$ found above, we set $j'=j+1$ and $i'=i+1$, respectively. After reindexing and grouping the terms, the binomial coefficients satisfy

$$\binom{m}{j}\binom{j}{i}+\binom{m}{j-1}\binom{j-1}{i}+\binom{m}{j-1}\binom{j-1}{i-1}=\binom{m+1}{j}\binom{j}{i}.$$

Indeed,
$$\binom{m}{j}\binom{j}{i}=\frac{m!}{i!(j-i)!(m-j)!},$$

and the preceding identity is a consequence of Pascal's formula applied successively to the binomial coefficients.
and for the remaining term, we have $$\mathcal{L}^{m+1}_X(-2\lambda\|\xi\|^2g)=\mathcal{L}_X \left(\mathcal{L}^m_X(-2\lambda\|\xi\|^2g) \right)=\mathcal{L}_X \left( \mathcal{Z}_m g \right) = \left( X(\mathcal{Z}_m) \right) g + \mathcal{Z}_m \mathcal{L}_X g$$

By replacing $\mathcal{L}_X g$ with its initial expression $\mathcal{Z}_0g=-2\lambda\|\xi\|^2g$ :

$$\mathcal{L}_X \left( \mathcal{Z}_mg \right) = \left( X(\mathcal{Z}_m) \right) g + \mathcal{Z}_0\mathcal{Z}_m g.$$ Since $\mathcal{Z}_{m+1}=\left( X(\mathcal{Z}_m) \right) + \mathcal{Z}_0\mathcal{Z}_m $, we have $\mathcal{L}^{m+1}_X(-2\lambda\|\xi\|^2g)=\mathcal{Z}_{m+1}g$.
We thus obtain
\begin{align*}
\mathcal{L}_X^{m+1}
\left(
\lambda(2-n)\,\xi^\flat\otimes\xi^\flat
\right)
+\mathcal{L}^{m+1}_X(-2\lambda|\xi|^2g)={}&
(2-n)
\sum_{j=0}^{m+1}
\sum_{i=0}^{j}
\binom{m+1}{j}\binom{j}{i}\\
&\qquad\times
X^{m+1-j}(\lambda)
\left(
\mathcal{L}_X^{j-i}\xi^\flat
\otimes
\mathcal{L}_X^i\xi^\flat
\right)+\mathcal{Z}_{m+1}g.
\end{align*}
This is precisely formula \eqref{1} at rank $m+1$, with:

$$\mathcal{Z}_{m+1} = X(\mathcal{Z}_{m}) + \mathcal{Z}_{0}\mathcal{Z}_{m} \quad \text{and} \quad \mathcal{Z}_0=-2\lambda\|\xi\|^2\quad \text{for all}\quad m\geq 1.$$
\end{enumerate}

\end{proof}
\begin{prop} 
\label{PP2}
Let $(M,g)$, with $n\geq2$, be a Riemannian manifold satisfying relation \eqref{eq:ricci} such that the quadruple $(M,g,X,\mu)$ is a Ricci solution. The vector field $X$ is a symmetry of the $k$-th order tensor $\mathcal{L}_X^k g$ (for $k\geq 1$) if and only if \begin{equation}\label{HOMOGENE}
    \left(\Phi_t^* \mathcal{L}_X^k g \right)_p=e^{2\mu t}\Big(\mathcal{L}^k_X g\Big)_p
\end{equation}  
for all $p\in M$ and all $t$ for which the flow is defined.
\end{prop}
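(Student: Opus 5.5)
The plan is to turn the flow condition \eqref{HOMOGENE} into an infinitesimal condition using Proposition \ref{prop:derivees}, and then to use the Ricci soliton equation \eqref{e0} to recognise that infinitesimal condition as the vanishing of $\mathcal{L}_X^k\operatorname{Ric}$. I read the phrase ``$X$ is a symmetry of the $k$-th order tensor $\mathcal{L}_X^kg$'' in the sense that the abstract and the surrounding discussion suggest: $X$ is a Ricci tensor symmetry of order $k$, $\mathcal{L}_X^k\operatorname{Ric}=0$. Equivalently, as shown below, $\mathcal{L}_X^kg$ is an eigentensor of $\mathcal{L}_X$ with eigenvalue $2\mu$.

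\emph{Step 1 (algebraic reformulation).} Rewrite \eqref{e0} as $\mathcal{L}_Xg=2\mu g-2\operatorname{Ric}$. Since $\mu$ is a constant, $\mathcal{L}_X$ commutes with multiplication by $\mu$. Applying $\mathcal{L}_X^k$ to both sides gives
\begin{equation*}
\mathcal{L}_X^{k+1}g=2\mu\,\mathcal{L}_X^kg-2\,\mathcal{L}_X^k\operatorname{Ric}.
\end{equation*}
Hence $\mathcal{L}_X^k\operatorname{Ric}=0$ if and only if $\mathcal{L}_X\big(\mathcal{L}_X^kg\big)=2\mu\,\mathcal{L}_X^kg$. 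If desired, the explicit form of $\mathcal{L}_X^k\operatorname{Ric}$ from Proposition \ref{PP1} can be substituted here, but it is not needed for the equivalence.

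\emph{Step 2 (integration along the flow).} Set $T=\mathcal{L}_X^kg$ and $T_t=\Phi_t^*T$.

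For the direct implication, assume $\mathcal{L}_XT=2\mu T$. By Proposition \ref{prop:derivees} with $k=1$ (equivalently, the group property $\Phi_{t+s}=\Phi_s\circ\Phi_t$), we get
\begin{equation*}
\frac{d}{dt}T_t=\Phi_t^*(\mathcal{L}_XT)=2\mu\,T_t.
\end{equation*}
At each fixed $p$, this is a linear ODE in the finite-dimensional vector space $T^0_2(T_pM)$, since $\Phi_t^*$ pulls back into the fibre at $p$. Its initial value is $T_0=T_p$. By uniqueness of solutions of linear ODEs, $(T_t)_p=e^{2\mu t}T_p$ for all $t$ in the maximal interval of definition of the flow through $p$.

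For the converse, assume \eqref{HOMOGENE}. Differentiate it at $t=0$ and use the definition \eqref{eq:def-lie} of the Lie derivative. This gives $\mathcal{L}_XT=2\mu T$ at every $p$, and Step 1 then yields $\mathcal{L}_X^k\operatorname{Ric}=0$.

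\emph{Main obstacle.} The computation itself is routine. The delicate points are two. First, one must fix precisely which notion of ``symmetry'' is meant; I commit to the Ricci-symmetry interpretation, which is the one making the statement true. Second, the ODE argument must be justified where the flow is only local. I would treat this pointwise, on the maximal interval $I_p$ of the integral curve through $p$, using $\Phi_{t+s}^*=\Phi_t^*\Phi_s^*$ on the appropriate domain, so that no completeness of $X$ is needed.
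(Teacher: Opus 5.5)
Your proposal is correct and follows essentially the same route as the paper. The paper likewise applies $\mathcal{L}_X^k$ to the soliton equation to show that $\mathcal{L}_X^k\operatorname{Ric}=0$ is equivalent to $\mathcal{L}_X^{k+1}g=2\mu\,\mathcal{L}_X^kg$, then solves the first-order linear ODE for $t\mapsto(\Phi_t^*\mathcal{L}_X^kg)_p$ (its $H_p=G_p^{(k)}$, which is your $T_t$ at $p$), and for the converse evaluates the derivative at $t=0$; it also reads ``symmetry'' as $\mathcal{L}_X^k\operatorname{Ric}=0$, exactly as you do.
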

\begin{proof}
Suppose that the quadruple $(M,g,X,\mu)$ is a Ricci solution.

    Fix $p\in M$ and set $ G_p(t):=(\Phi_t^*g)_p.$
    
    For each $t$, $G_p(t)$ belongs to $\operatorname{Sym}^2(T_p^*M).$

By \eqref{eq:derivees-successives},
\begin{equation}
    G_p^{(k)}(t)  =  \left( \Phi_t^*\mathcal{L}_X^kg
    \right)_p.
    \label{eq:G-derivee}
\end{equation}
Suppose that $\mathcal{L}^k_X\operatorname{Ric}=0$; then the Ricci soliton equation becomes \begin{equation}\label{A}
\mathcal{L}^k_X\operatorname{Ric}+\frac{1}{2}\mathcal{L}^{k+1}_Xg=\mu \mathcal{L}^k_Xg
 \end{equation} which yields $$\mathcal{L}^{k+1}_Xg=2\mu \mathcal{L}^k_Xg$$
Taking the pullback by $\Phi_t$, we obtain $\Phi^*_t(\mathcal{L}_X^{k+1}g)=2\mu\Phi_t^* (\mathcal{L}_X^kg).$

By \eqref{eq:G-derivee}, $  G_p^{(k+1)}(t) = 2\mu G_p^{(k)}(t)$, so 
\begin{equation}
    G_p^{(k+1)}(t) -  2\mu G_p^{(k)}(t)  = 0
    \label{eq:ode-Gp}
\end{equation} 
Now let $  H_p(t):= G_p^{(k)}(t).$ Then $ H_p'(t)=G_p^{(k+1)}(t).$

Equation \eqref{eq:ode-Gp} becomes
\begin{equation}
    H_p'(t)-2\mu H_p(t)=0.
    \label{eq:ode-H}
\end{equation}
The solution to this linear homogeneous differential equation is 
   $$ H_p(t)=e^{2\mu t}H_p(0)=e^{2\mu t}G_p^{(k)}(0)=e^{2\mu t}\Big(\mathcal{L}^k_Xg\Big)_p$$

Conversely, suppose that \eqref{HOMOGENE} holds for all $p$ and all $t$. Since \eqref{HOMOGENE} is the solution to \eqref{eq:ode-Gp}, at $t=0$ given that $\Phi_0=\operatorname{Id}$,  we have $G_p^{(k+1)}(0)=2\mu G_p^{(k)}(0)$. However, $$G_p^{(k+1)}(0) =(\mathcal{L}_X^{k+1}g)_p\qquad\text{and}\qquad G_p^{(k)}(0) =(\mathcal{L}_X^kg)_p.$$

Thus, $(\mathcal{L}_X^{k+1}g)_p=2\mu(\mathcal{L}_X^kg)_p.$

Since $p$ is arbitrary, $\mathcal{L}_X^{k+1}g=2\mu \mathcal{L}_X^{n-1}g.$

Substituting this into equation \eqref{A} yields the result.
\end{proof}
\begin{thm}\label{thm:principal}
Let $(M,g)$ be a Riemannian manifold of dimension $n \geq 2$ with a curvature tensor satisfying relation \eqref{eq1}, such that $(M,g,X,\mu)$ is a Ricci soliton.
The following assertions are equivalent:

\begin{enumerate}[label=\textnormal{(\roman*)}]

    \item
    \begin{equation}
      (2-n)\sum_{j=0}^{k}\sum_{i=0}^{j}\binom{k}{j}\binom{j}{i}
X^{k-j}(\lambda)\times
\left(
\mathcal{L}_X^{j-i}\xi^\flat
\otimes
\mathcal{L}_X^i\xi^\flat
\right)=- \mathcal{Z}_kg
   \end{equation} where  $X^{k-j}(\lambda)=\mathcal{L}^{k-j}_X\lambda$, $\mathcal{Z}_k=X(\mathcal{Z}_{k-1})+\mathcal{Z}_0\mathcal{Z}_{k-1}$ for all $k\ge 1$ and $\mathcal{Z}_0=-2\lambda |\xi|^2$;
   \item the vector field $X$ is a symmetry of the Ricci tensor of order $k \ge 1$;
    \item for all  $p\in M$,
    \begin{equation}
    \frac{\mathrm{d}^{k+1}}{\mathrm{d}t^{k+1}} (\Phi_t^*g)_p - 2\mu \frac{\mathrm{d}^k}{\mathrm{d}t^k} (\Phi_t^*g)_p = 0
    \label{eq:principal-ii}
\end{equation}

    \item for all $p\in M$ and all $u,v\in T_pM$,
    \begin{equation}
   \frac{\mathrm{d}^{k+1}}{\mathrm{d}t^{k+1}}
        g_{\gamma_p(t)}
        \left(
        (\mathrm{d}\Phi_t)_pu,
        (\mathrm{d}\Phi_t)_pv
        \right)
        - 2\mu \frac{\mathrm{d}^k}{\mathrm{d}t^k}
        g_{\gamma_p(t)}
        \left(
        (\mathrm{d}\Phi_t)_pu,
        (\mathrm{d}\Phi_t)_pv
        \right)
        =0
    \label{eq:principal-iii}
    \end{equation}

\end{enumerate}

Moreover, if the Ricci soliton is stationary, then
\begin{equation}
    \Phi_t^*g = \sum_{j=0}^{k} \frac{t^j}{j!}
    \mathcal{L}_X^jg.
    \label{eq:principal-c-zero}
\end{equation}

if the Ricci soliton is not stationary, then
\begin{equation}
\Phi_t^*g
    =
    \sum_{j=0}^{k-1}
    \frac{t^j}{j!}
    \left(
    \mathcal{L}_X^jg-(2\mu)^{j-k}\mathcal{L}_X^kg.
    \right)
    +\frac{e^{2\mu t}}{2^k\mu^k}
    \mathcal{L}_X^kg.
    \label{B}
\end{equation}
\end{thm}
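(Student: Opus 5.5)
The plan is to prove the cycle (i)$\Leftrightarrow$(ii)$\Leftrightarrow$(iii)$\Leftrightarrow$(iv) and then integrate the resulting linear ODE to get the two explicit formulas.

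\emph{(i)$\Leftrightarrow$(ii).} This follows directly from Proposition \ref{PP1} with $m=k$. That proposition writes $\mathcal{L}_X^k\operatorname{Ric}$ as the double binomial sum plus $\mathcal{Z}_kg$. So $\mathcal{L}_X^k\operatorname{Ric}=0$ holds exactly when the sum equals $-\mathcal{Z}_kg$, which is (i).

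\emph{(ii)$\Leftrightarrow$(iii).} Apply $\mathcal{L}_X^k$ to the soliton equation \eqref{e0}. Since $\mu$ is constant, this gives $\mathcal{L}_X^k\operatorname{Ric}+\frac12\mathcal{L}_X^{k+1}g=\mu\mathcal{L}_X^kg$. Hence $\mathcal{L}_X^k\operatorname{Ric}=0$ if and only if $\mathcal{L}_X^{k+1}g=2\mu\mathcal{L}_X^kg$.

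Pulling back by $\Phi_t$ and using Proposition \ref{prop:derivees}, this identity becomes \eqref{eq:principal-ii} at every $t$. Conversely, evaluating \eqref{eq:principal-ii} at $t=0$, where $\Phi_0=\operatorname{Id}$, recovers $\mathcal{L}_X^{k+1}g=2\mu\mathcal{L}_X^kg$. This is the same argument as in Proposition \ref{PP2}.

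\emph{(iii)$\Leftrightarrow$(iv).} This is only a matter of unwinding definitions. For $u,v\in T_pM$ one has $(\Phi_t^*g)_p(u,v)=g_{\gamma_p(t)}((\mathrm{d}\Phi_t)_pu,(\mathrm{d}\Phi_t)_pv)$. The $t$-derivatives commute with evaluation on the fixed vectors $u,v$. A symmetric bilinear form on $T_pM$ vanishes if and only if it vanishes on all pairs $(u,v)$, so the two statements agree.

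\emph{Explicit formulas.} Fix $p$ and set $G(t)=(\Phi_t^*g)_p$, a curve in the finite-dimensional space $\operatorname{Sym}^2(T_p^*M)$. It satisfies $G^{(k+1)}=2\mu G^{(k)}$ with initial data $G^{(j)}(0)=(\mathcal{L}_X^jg)_p$ for $0\le j\le k$.

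If $\mu=0$, then $G^{(k+1)}=0$. So $G$ is a polynomial of degree at most $k$, and Taylor's formula gives \eqref{eq:principal-c-zero}.

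If $\mu\neq0$, put $H=G^{(k)}$. Then $H'=2\mu H$, so $H(t)=e^{2\mu t}\mathcal{L}_X^kg$. Integrating $k$ times gives a particular solution $(2\mu)^{-k}e^{2\mu t}\mathcal{L}_X^kg$ plus a polynomial $P(t)$ of degree at most $k-1$. The coefficients of $P$ are fixed by matching derivatives at $0$ for $j\le k-1$:
\[
P^{(j)}(0)=\mathcal{L}_X^jg-(2\mu)^{j-k}\mathcal{L}_X^kg .
\]
This yields \eqref{B}. Uniqueness of solutions of linear ODEs with given initial data justifies both steps.

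\emph{Main obstacle.} The nonstationary integration in \eqref{B} is the only step needing care: the bookkeeping of the powers $(2\mu)^{j-k}$ and the check that the exponential term reproduces the correct Taylor coefficients at $t=0$. The other equivalences are immediate from Propositions \ref{PP1}, \ref{prop:derivees} and \ref{PP2}.
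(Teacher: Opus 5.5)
Your arguments for (ii)$\Leftrightarrow$(iii)$\Leftrightarrow$(iv) and for the two closed formulas for $\Phi_t^*g$ are correct and essentially match the paper. The only difference is in the case $\mu\neq0$: the paper writes the general solution of $G^{(k+1)}=2\mu G^{(k)}$ via the characteristic polynomial $r^k(r-2\mu)$, whereas you integrate $H=G^{(k)}=e^{2\mu t}(\mathcal{L}_X^kg)_p$ $k$ times. Both give the same constants $(2\mu)^{j-k}$.

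The genuine gap is (i)$\Leftrightarrow$(ii). Like the paper, you take it directly from Proposition~\ref{PP1}, but that proposition is false, so this step establishes nothing, and the equivalence itself fails.

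\begin{itemize}
\item By \eqref{eq:ricci}, the $g$-coefficient of $\operatorname{Ric}$ is $-\lambda\|\xi\|^2$, not $\mathcal{Z}_0=-2\lambda\|\xi\|^2$. So the formula is already wrong at $m=0$.
\item More seriously, the recursion $\mathcal{Z}_{m+1}=X(\mathcal{Z}_m)+\mathcal{Z}_0\mathcal{Z}_m$ comes from replacing $\mathcal{L}_Xg$ by $\mathcal{Z}_0g$, which has no justification. For a soliton field, $\mathcal{L}_Xg=2\mu g-2\operatorname{Ric}=2(\mu+\lambda\|\xi\|^2)g+2(n-2)\lambda\,\xi^\flat\otimes\xi^\flat$. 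Its $g$-part depends on $\mu$, and for $n\ge3$ its $\xi^\flat\otimes\xi^\flat$-part feeds back into the non-$g$ terms.
\end{itemize}

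What Leibniz actually gives is $\mathcal{L}_X^k\operatorname{Ric}=(2-n)\mathcal{L}_X^k(\lambda\,\xi^\flat\otimes\xi^\flat)-\sum_{l=0}^{k}\binom{k}{l}X^{k-l}(\lambda\|\xi\|^2)\,\mathcal{L}_X^lg$, with the $\mathcal{L}_X^lg$ generated recursively from the soliton equation. This does not collapse to the form asserted in (i).

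A concrete counterexample: take $n=2$ and an open piece of a round sphere carrying a nowhere-vanishing $\xi$.
\begin{itemize}
\item In dimension $2$ one has $(\xi^\flat\otimes\xi^\flat)\owedge g=\tfrac12\|\xi\|^2\,g\owedge g$. So \eqref{eq1} holds with $\lambda\|\xi\|^2$ a nonzero constant, and $g$ is Einstein.
\item With $X=0$ and $\mu$ the Einstein constant, $(M,g,X,\mu)$ is a Ricci soliton and $\mathcal{L}_X^k\operatorname{Ric}=0$ for every $k\ge1$. So (ii) holds.
\item The left-hand side of (i) vanishes, while $\mathcal{Z}_k=\mathcal{Z}_0^{k+1}=(-2\lambda\|\xi\|^2)^{k+1}\neq0$. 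So (i) fails.
\end{itemize}
To close the gap, (i) must be replaced by the vanishing of the correct expansion above. The rest of your proof can stand as written.
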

\begin{proof}
   The equivalence between \textnormal{(i)} and \textnormal{(ii)} is given by Proposition \ref{PP1}.

The equivalence between \textnormal{(ii)} and \textnormal{(iii)}

is exactly the content of Proposition \ref{PP2}.

For the equivalence between \textnormal{(iii)} and
\textnormal{(iv)}, we use  $$ (\Phi_t^*g)_p(u,v) =  g_{\Phi_t(p)} \left( (d\Phi_t)_pu, (d\Phi_t)_pv \right).$$
Since $\gamma_p(t)=\Phi_t(p),$ we obtain \eqref{eq:principal-iii}.
Consequently: \begin{itemize}[label=$\bullet$]
    \item If the Ricci soliton is stationary, then $\mu=0$, which yields $\mathcal{L}_X^{k+1}g=0$; therefore, \begin{equation}
    \frac{d^{k+1}}{dt^{k+1}}\Phi_t^*g=0.
    \label{m1}
\end{equation} Equation \eqref{m1} directly implies that the tensor-valued function $t \mapsto \Phi_t^*g$ has a vanishing $(k+1)$-th derivative.

It is therefore a polynomial in $t$ of degree at most $k$.

Taylor's formula is then:
$$ \Phi_t^*g  = \sum_{j=0}^{k}  \frac{t^j}{j!} \left.\frac{d^j}{dt^j}\right|_{t=0}\varphi_t^*g.$$

According to \eqref{eq:derivees-successives},

$ \left. \frac{d^j}{dt^j} \right|_{t=0} \Phi_t^*g = \mathcal{L}_X^jg.$

Therefore, we obtain $$ \Phi_t^*g=\sum_{j=0}^{k}
    \frac{t^j}{j!} \mathcal{L}_X^jg.$$

\item if the Ricci soliton is not stationary, then

Let $G(t)=\Phi_t^*g.$

The differential equation derived from \eqref{A} is $ G^{(k+1)} -2\mu G^{(k)} = 0.$

Its characteristic polynomial is $  r^{k+1}-2\mu r^k=  r^k(r-2\mu).$

The root $0$ has multiplicity $k$ and the root $2\mu$ is simple.

The general solution is therefore of the form
\begin{equation}
    G(t) = \sum_{j=0}^{k-1}
    \frac{t^j}{j!}A_j
    +
    e^{2\mu t}B,
    \label{eq:solution-Ak}
\end{equation}
where $A_j$ and $B$ are symmetric tensors independent of $t$.

For $j \le k-1$, let us differentiate \eqref{eq:solution-Ak} $j$ times
and evaluate at $t=0$.

We obtain $G^{(j)}(0) = A_j + (2\mu)^j B$.

But $G^{(j)}(0) = \mathcal{L}_X^j g$.

Consequently,
\begin{equation}
    A_j  =\mathcal{L}_X^jg-(2\mu)^jB.
    \label{eq:def-Ak}
\end{equation}

For $j=k$, the polynomial part vanishes after differentiation,
and we obtain $G^{(k)}(0) =(2\mu)^kB.$ However, $G^{(k)}(0)  =\mathcal{L}_X^kg.$

Thus \begin{equation}\label{BB}
    B=\frac{1}{2^k\mu^k}
    \mathcal{L}_X^kg.
\end{equation}

By substituting \eqref{eq:def-Ak} and \eqref{BB} into \eqref{eq:solution-Ak},
we obtain \eqref{B}.
\end{itemize}
\end{proof}
\begin{prop}
    Let $p\in M$ and let $\gamma_p(t)=\Phi_t(p)$ be the integral curve of a smooth vector field $X$ passing through $p$ such that $(M,g,X,\mu)$ is a Ricci soliton.

Let $u,v\in T_pM$.
Define
\begin{equation}
    h_{p,u,v}(t)  := (\Phi_t^*g)_p(u,v).
    \label{h1}
\end{equation}
$X$ is a Ricci tensor symmetry of order $k\ge 1$ if and only if
\begin{equation}
    h_{p,u,v}^{(k+1)}(t)  -  2\mu h_{p,u,v}^{(k)}(t)= 0.
    \label{eq:ode-h}
\end{equation}
\end{prop}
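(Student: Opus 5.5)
The plan is to reduce the statement to Theorem \ref{thm:principal}, in particular the equivalence (ii)$\Leftrightarrow$(iv). The new formulation is simply the scalar version of (iii), obtained by evaluating on pairs of tangent vectors at $p$.

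First step: relate $h_{p,u,v}$ to the pullback. By the definition of the pullback,
$$h_{p,u,v}(t)=(\Phi_t^*g)_p(u,v)=g_{\gamma_p(t)}\big((\mathrm d\Phi_t)_pu,(\mathrm d\Phi_t)_pv\big).$$
For fixed $(p,u,v)$, evaluation $S\mapsto S(u,v)$ is a linear map $\operatorname{Sym}^2(T_p^*M)\to\mathbb R$ that does not depend on $t$. Hence it commutes with $\frac{d^j}{dt^j}$. Combined with Proposition \ref{prop:derivees}, this gives
$$h^{(j)}_{p,u,v}(t)=\big(\Phi_t^*\mathcal L_X^jg\big)_p(u,v).$$

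Forward direction. Assume $X$ is a Ricci tensor symmetry of order $k$, so $\mathcal L_X^k\operatorname{Ric}=0$. Apply $\mathcal L_X^k$ to the soliton equation \eqref{e0}, exactly as in \eqref{A}. Since $\mu$ is constant, this gives $\mathcal L_X^{k+1}g=2\mu\mathcal L_X^kg$. Pulling back by $\Phi_t$ and evaluating on $(u,v)$ then yields \eqref{eq:ode-h} for all $t$.

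Converse direction. Assume \eqref{eq:ode-h} holds for all $p,u,v$. Set $t=0$ and use $\Phi_0=\operatorname{Id}$. This gives $(\mathcal L_X^{k+1}g)_p(u,v)=2\mu(\mathcal L_X^kg)_p(u,v)$. Since $u,v$ and $p$ are arbitrary, we get the tensor identity $\mathcal L_X^{k+1}g=2\mu\mathcal L_X^kg$. Substituting this into $\mathcal L_X^k$ of \eqref{e0} gives $\mathcal L_X^k\operatorname{Ric}=0$.

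Main obstacle. The delicate point is quantification, since the hypothesis must hold for all $p$, all $u,v$, and (at least) $t=0$. A symmetric bilinear form is determined by its values on all pairs $(u,v)$; polarization is not even needed, because we evaluate on arbitrary pairs. One must also avoid the slip in Proposition \ref{PP2}, where $\mathcal L_X^{n-1}$ is written instead of $\mathcal L_X^k$; the correct identity is $\mathcal L_X^{k+1}g=2\mu\mathcal L_X^kg$. Finally, the result only concerns the local flow, so all $t$ should be restricted to the flow's domain $I_p$.
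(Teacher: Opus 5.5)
Your proposal is correct and follows essentially the paper's route. You apply $\mathcal{L}_X^k$ to the soliton equation to get $\mathcal{L}_X^{k+1}g=2\mu\,\mathcal{L}_X^kg$, pull back via Proposition~\ref{prop:derivees}, evaluate on $(u,v)$, and recover the converse at $t=0$ exactly as in Proposition~\ref{PP2}. You are more careful than the paper, which writes out only the forward direction here: you make explicit that the converse needs the identity for all $p$ and all $u,v$, and you correct the $\mathcal{L}_X^{n-1}$ typo.
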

\begin{proof}
Suppose that $\mathcal{L}_X^k\operatorname{Ric}=0$ and $(M,g,X,\mu)$ is a Ricci soliton.

    By the definition of the pullback of equality \eqref{h1}, we find:
\begin{equation}
    h_{p,u,v}(t)= g_{\gamma_p(t)}\left((d\varphi_t)_pu,(d\varphi_t)_pv\right).
    \label{eq:h-geometrique}
\end{equation}

Thus, $h_{p,u,v}(t)$ represents the inner product—with respect to the metric at the point $\gamma_p(t)$—of two vectors transported by the flow.

According to relation \eqref{h1}, $h_{p,u,v}(t) = G_p(t)(u,v)$.

The condition $\mathcal{L}_X^k\operatorname{Ric}=0$ is equivalent to Proposition \ref{PP2}. Furthermore, since the vectors $u$ and $v$ are fixed in $T_pM$, we have $h_{p,u,v}^{(k)}(t) = G_p^{(k)}(t)(u,v)$.

Evaluating equation \eqref{eq:ode-Gp} on $(u,v)$ yields the result.
\end{proof}

\subsection{Finite-order Lie fields}
In this subsection, we assume that the vector field $X$ we use is a conformal vector field and that its infinitesimal flow preserves the line distribution $\mathcal{D} = \operatorname{Span}\{\xi\}$; that is, there exist a smooth function $\varphi$ and a real number $a$ such that $\mathcal{L}_Xg = 2\varphi g$ and $\mathcal{L}_X\xi = a\xi$, respectively.
\begin{ex}
    Consider the Riemannian manifold given in Example \ref{exemple1} and set $\mathfrak{f}(t) = t^k$ with $k \neq 0$, where $I \subset \mathbb{R}$ is an open interval.

Consider the vector field $X$ defined on $M$ by:
\begin{equation}
\label{eq:X_power}
X = -a t\,\partial_t + a(k - 1) x\,\partial_x - a \sum_{j=1}^{n-2} y_j\partial_{y_j},
\end{equation}
where $a$ is a real constant.
Evaluating the Lie bracket by its action on a smooth test function $f$, we obtain:
\begin{align*}
\left(\mathcal{L}_X\xi\right)f=[X,\xi]f=[X, \partial_t]f &= X(\partial_t f) - \partial_t(X f)= \left( -at\,\partial_t + a(k-1)x\,\partial_x - a\sum_{j=1}^{n-2} y_j\partial_{y_j} \right)(\partial_t f) \\
&- \partial_t\left( -at\,\partial_t f + a(k-1)x\,\partial_x f - a\sum_{j=1}^{n-2} y_j\partial_{y_j} f \right) \\
&= -at\,\partial_t^2 f + a(k-1)x\partial_x\partial_t f - a\sum_{j=1}^{n-2} y_j\partial_{y_j}\partial_t f \\
& - \left( -a\partial_t f - at\partial_t^2 f + a(k-1)x\,\partial_t\partial_x f - a\sum_{j=1}^{n-2} y_j\partial_t\partial_{y_j} f \right) = a\partial_t f=a\xi(f).
\end{align*} So $$\mathcal{L}_X\xi=a\xi.$$
We also have: $$(\mathcal{L}_X g)_{tt} = 2\,\partial_t(-at) = -2a = -2ag_{tt}.$$
\begin{align*}
(\mathcal{L}_X g)_{xx} &= -at\partial_t(t^{2k}) + 2\,t^{2k}\,\partial_x\Big(a(k-1)x\Big) = (-at)(2k t^{2k-1}) + 2a(k-1)t^{2k} \\
&= -2ak t^{2k} + (2ak - 2a)t^{2k}= -2at^{2k} = -2ag_{xx}.
\end{align*}
With  $X_j = -a y_j$ :
$$(\mathcal{L}_X g)_{y_\alpha y_i} = \partial_{y_i} X_j + \partial_{y_j} X_\beta = -a\delta_{ji} - a\delta_{ji} = -2a\delta_{j i} = -2ag_{y_j y_i}.$$
All the cross derivatives cancel out identically: $$(\mathcal{L}_X g)_{tx} = (\mathcal{L}_X g)_{t y_j} = (\mathcal{L}_X g)_{x y_j} = 0.$$ By grouping all these contributions, we obtain  $\mathcal{L}_X g = -2a g$.
\end{ex}
\begin{lem}
Under these given conditions, 
we have: $\mathcal{L}_X\xi^\flat=(a+2\varphi)\xi^\flat.$
\end{lem}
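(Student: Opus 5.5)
The plan is to compute $\mathcal{L}_X\xi^\flat$ directly from the fact that $\xi^\flat=g(\xi,\cdot)$ is a contraction of $g\otimes\xi$. The Lie derivative is a derivation that commutes with contractions. Hence, for any vector field $Y$, we have the Leibniz identity
$$X\big(\xi^\flat(Y)\big)=(\mathcal{L}_X\xi^\flat)(Y)+\xi^\flat([X,Y]).$$
Since $\xi^\flat(Y)=g(\xi,Y)$, the same rule applied to $g$ gives
$$X\big(g(\xi,Y)\big)=(\mathcal{L}_Xg)(\xi,Y)+g([X,\xi],Y)+g(\xi,[X,Y]).$$
Comparing the two expressions, the terms $g(\xi,[X,Y])=\xi^\flat([X,Y])$ cancel. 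This leaves the general identity
$$(\mathcal{L}_X\xi^\flat)(Y)=(\mathcal{L}_Xg)(\xi,Y)+g(\mathcal{L}_X\xi,Y).$$
Equivalently, $\mathcal{L}_X\xi^\flat=(\mathcal{L}_Xg)(\xi,\cdot)+(\mathcal{L}_X\xi)^\flat$.

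Next I substitute the two standing hypotheses of this subsection. The conformality condition $\mathcal{L}_Xg=2\varphi g$ turns the first term into $2\varphi\,g(\xi,Y)=2\varphi\,\xi^\flat(Y)$. The distribution-preserving condition $\mathcal{L}_X\xi=[X,\xi]=a\xi$ turns the second term into $a\,g(\xi,Y)=a\,\xi^\flat(Y)$. Adding these gives $(\mathcal{L}_X\xi^\flat)(Y)=(a+2\varphi)\xi^\flat(Y)$ for every $Y$, which is the claim.

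As an alternative check, I could argue via the Levi-Civita connection. We have $(\mathcal{L}_X\xi^\flat)(Y)=g(\nabla_Y X,\xi)+g(\nabla_X\xi,Y)$. Writing $\nabla_X\xi=\nabla_\xi X+[X,\xi]$ and using $g(\nabla_YX,\xi)+g(\nabla_\xi X,Y)=(\mathcal{L}_Xg)(Y,\xi)$ leads to the same expression.

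There is no genuine obstacle. The only point needing care is the sign convention $\mathcal{L}_X\xi=[X,\xi]$ (rather than $[\xi,X]$), which is the convention used in the preceding example. Note also that the hypothesis $a\in\mathbb{R}$ is not needed here; the identity holds verbatim for a smooth function $a$.
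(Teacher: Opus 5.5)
Your proof is correct and follows the paper's argument: both expand $\xi^\flat=g(\xi,\cdot)$ by the Leibniz rule, $\mathcal{L}_X\xi^\flat=(\mathcal{L}_Xg)(\xi,\cdot)+g(\mathcal{L}_X\xi,\cdot)$, and then substitute $\mathcal{L}_Xg=2\varphi g$ and $\mathcal{L}_X\xi=a\xi$. The difference is that you justify the Leibniz step explicitly, which the paper leaves implicit; your side remarks are also correct, namely the connection-based check and the observation that $a$ could be any smooth function.
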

\begin{proof}
By definition, $\xi^\flat=g(\xi,\cdot).$
So $\mathcal{L}_X\xi^\flat
=
(\mathcal{L}_Xg)(\xi,\cdot)
+
g(\mathcal{L}_X\xi,\cdot).$
The hypotheses give $(\mathcal{L}_Xg)(\xi,\cdot)
=
2\varphi\xi^\flat\qquad\text{et}\qquad g(\mathcal{L}_X\xi,\cdot)
=
a\xi^\flat.$ Hence $\mathcal{L}_X\xi^\flat
=
(a+2\varphi)\xi^\flat.$
\end{proof}
\begin{rem}
    It follows that \begin{equation}
\mathcal{L}_X\eta
=
2(a+2\varphi)\eta.
\label{and}
\end{equation} 
By reporting in \eqref{R}, we obtain
\begin{align}
\mathcal{L}_XR
&=
X(\lambda)\eta\owedge g
+
2\lambda(a+2\varphi)\eta\owedge g
+
2\varphi\lambda\eta\owedge g
\nonumber\\
&=
\left[
X(\lambda)
+
2a\lambda
+
6\varphi\lambda
\right]
\eta\owedge g.
\label{eq:LXR_scalar}
\end{align}
\end{rem}
Posons maintenant  $q_0=\lambda,$ et définissons récursivement
\begin{equation}
q_{m+1}=X(q_m)+(2a+6\varphi)q_m.
\label{eq:recurrence}
\end{equation}

Alors nous avons le résultat suivant.
\begin{prop}
Pour tout entier $m\geq0$, $\mathcal{L}_X^mR
=q_m\eta\owedge g,$
où $q_{m+1}=X(q_m)+(2a+6\varphi)q_k.$
\end{prop}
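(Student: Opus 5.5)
The proof goes by induction on $m$, with the remark's computation \eqref{eq:LXR_scalar} as the engine. (The recurrence in the statement should read $q_{m+1}=X(q_m)+(2a+6\varphi)q_m$, as in \eqref{eq:recurrence}; the index $q_k$ is a typo.)

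\emph{Base case.} For $m=0$ the claim is exactly the standing hypothesis \eqref{eq1}. Indeed, $\mathcal{L}_X^0R=R=\lambda\,\eta\owedge g=q_0\,\eta\owedge g$.

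\emph{Inductive step.} Assume $\mathcal{L}_X^mR=q_m\,\eta\owedge g$ for some smooth function $q_m$. Applying $\mathcal{L}_X$ once more and using the Leibniz rule for a function times a tensor gives
$$\mathcal{L}_X^{m+1}R=X(q_m)\,\eta\owedge g+q_m\,\mathcal{L}_X(\eta\owedge g).$$
The earlier lemma on the Lie derivative of a Kulkarni--Nomizu product expands the last factor as
$$\mathcal{L}_X(\eta\owedge g)=(\mathcal{L}_X\eta)\owedge g+\eta\owedge\mathcal{L}_Xg.$$
We then insert the two structural identities available in this subsection:
\begin{itemize}
\item $\mathcal{L}_X\eta=2(a+2\varphi)\eta$, from \eqref{and}, which comes from the lemma $\mathcal{L}_X\xi^\flat=(a+2\varphi)\xi^\flat$ and the Leibniz rule on $\xi^\flat\otimes\xi^\flat$;
\item $\mathcal{L}_Xg=2\varphi g$, the conformal hypothesis.
\end{itemize}
By bilinearity of $\owedge$ these give
$$\mathcal{L}_X(\eta\owedge g)=\bigl(2a+4\varphi+2\varphi\bigr)\eta\owedge g=(2a+6\varphi)\,\eta\owedge g.$$
This is the same computation that produced \eqref{eq:LXR_scalar}, now carried out with $\lambda$ replaced by $q_m$. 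Combining,
$$\mathcal{L}_X^{m+1}R=\bigl[X(q_m)+(2a+6\varphi)q_m\bigr]\eta\owedge g=q_{m+1}\,\eta\owedge g,$$
which closes the induction. Equivalently, $q_m=D_X^m\lambda$ with $D_X=X+6\varphi+2a$, matching the operator in the abstract.

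\emph{Main obstacle.} The argument is routine; the only delicate point is making sure the key identity $\mathcal{L}_X(\eta\owedge g)=(2a+6\varphi)\eta\owedge g$ is genuinely independent of $m$. That requires $\eta$ and $g$ themselves to be unchanged by the induction: at every step only the scalar coefficient $q_m$ changes. Stated this way, the same factor $2a+6\varphi$ reappears at each iteration and no terms accumulate.

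We also need $\varphi$ to be a function and $a$ a constant, so that no additional derivatives of $a$ appear. Even if $a$ were a function, the formula would still hold as written, since $X(a)$ never enters.
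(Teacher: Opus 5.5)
Your proof is correct and follows the paper's own argument: induction on $m$, the Leibniz rule applied to $q_m\,\eta\owedge g$, then substituting $\mathcal{L}_X\eta=2(a+2\varphi)\eta$ and $\mathcal{L}_Xg=2\varphi g$ to obtain the coefficient $X(q_m)+(2a+6\varphi)q_m$. You are also right that the $q_k$ in the statement is a typo for $q_m$.
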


\begin{proof}
For $m=0$, the identity is precisely $R=q_0\eta\owedge g.$

Suppose $\mathcal{L}_X^mR=q_m\eta\owedge g.$

Then $\mathcal{L}_X^{m+1}R=\mathcal{L}_X(q_m\eta\owedge g).$

By Leibniz, $$\mathcal{L}_X^{m+1}R=X(q_m)\eta\owedge g+q_m(\mathcal{L}_X\eta)\owedge g+q_m\eta\owedge\mathcal{L}_Xg.$$
The relation \eqref{eq:recurrence} and the fact that $X$ is conformal give us: $\mathcal{L}_X^{m+1}R
=\left[
X(q_m)+2(a+2\varphi)q_m+2\varphi q_m
\right]\eta\owedge g.$

Thus $\mathcal{L}_X^{m+1}R=q_{m+1}\eta\owedge g,$ with $q_{m+1}=X(q_m)+(2a+6\varphi)q_m.$
\end{proof}
\begin{rem}
  Under the given hypotheses,
all iterates of $R$ belong to $\operatorname{Span}_{C^\infty(M)}
\{\eta\owedge g\}$ and the space of iterates $\mathcal{V}_X(R)==
\operatorname{Span}_{\mathbb R}
\left\{
R,\mathcal{L}_XR,\mathcal{L}_X^2R,...,\mathcal{L}^mR,...
\right\}$ has rank $1$.
\end{rem}
\begin{prop}
    Let $\alpha=2a+6\varphi$ and define the operator $D_X=X+\alpha$ with $D^0_X=\lambda$. Then $$q_m=D_X^m(\lambda).$$ 
\end{prop}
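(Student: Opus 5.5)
The plan is a direct induction on $m$, with the recursion \eqref{eq:recurrence} as the only input. First I fix what $D_X$ means. It acts on smooth functions by $D_X(f)=X(f)+\alpha f$, where $\alpha=2a+6\varphi$ acts as a multiplication operator. Its powers are defined iteratively by $D_X^{m+1}(f)=D_X\bigl(D_X^m(f)\bigr)$. The convention ``$D_X^0=\lambda$'' in the statement is read as $D_X^0(\lambda)=\lambda$, i.e. $D_X^0$ is the identity on $C^\infty(M)$. With these conventions, the claim $q_m=D_X^m(\lambda)$ says exactly that the sequence $(q_m)$ satisfies the same first-order recursion as the iterates of $D_X$ and has the same initial value.

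For the base case $m=0$, we have $q_0=\lambda=D_X^0(\lambda)$ by the definition $q_0=\lambda$ preceding the previous proposition.

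For the inductive step, assume $q_m=D_X^m(\lambda)$. By \eqref{eq:recurrence},
\[
q_{m+1}=X(q_m)+(2a+6\varphi)q_m=X(q_m)+\alpha q_m=D_X(q_m).
\]
Substituting the induction hypothesis gives
\[
q_{m+1}=D_X\bigl(D_X^m(\lambda)\bigr)=D_X^{m+1}(\lambda),
\]
which closes the induction. Combined with the previous proposition, this also yields $\mathcal{L}_X^mR=D_X^m(\lambda)\,\eta\owedge g$ for every $m\ge 0$.

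The argument has no real obstacle; the only delicate point is notational. In the previous proposition the recursion is misprinted with $q_k$ in place of $q_m$, and the statement's convention $D_X^0=\lambda$ conflates the operator with its value. I will state explicitly that $q_{m+1}=X(q_m)+(2a+6\varphi)q_m$ is the recursion being used. I will also note that the factor $\alpha$ depends on the point through $\varphi$, so $D_X$ is a first-order linear differential operator with variable coefficient and is not merely a shift of $X$ by a constant. The induction never needs to commute $X$ past $\alpha$, so this causes no difficulty. No expansion of $D_X^m$ into a binomial-type formula is needed for this statement.
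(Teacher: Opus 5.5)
Your proof is correct and matches the paper's argument: base case $q_0=\lambda=D_X^0(\lambda)$, and the inductive step $q_{m+1}=X(q_m)+\alpha q_m=D_X(q_m)=D_X\bigl(D_X^m(\lambda)\bigr)=D_X^{m+1}(\lambda)$ via the recursion \eqref{eq:recurrence}. Your notational remarks (the $q_k$ misprint, and reading $D_X^0$ as the identity) only clarify the statement and do not change the argument.
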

\begin{proof}
We see that for $m= 0$, $D_X^0(\lambda) = \lambda = q_0$

Let us assume that $q_m=D_X^m(\lambda)$ holds for some rank $m \ge 0$.

We show that $q_{m+1}=D_X^{m+1}(\lambda).$

We have: 
$$D_X^{m+1}(\lambda)=D_X\Big(D_X^{m}(\lambda)\Big)=D_X(q_m) = X(q_m) + \alpha q_m = X(q_m) + (2a+6\varphi)q_m$$
However, according to the recurrence relation in the statement, we have precisely:
$$q_{m+1} = X(q_m) + (2a+6\varphi)q_m$$
Thus, the equality holds.
\end{proof}
\begin{rem}\label{re}
    For a fixed integer $k\geq0$, the vector field $X$ is a Lie symmetry of order $k+1$ if and only if $D_X^{k}(\lambda)\ne0$ and $D_X^{k+1}(\lambda)=0.$
\end{rem}
\begin{thm}
Remark \ref{re} shows that the geometric problem of determining the minimal order
of $R$ with respect to $X$ is equivalent to the scalar differential
problem of determining the minimal order of $\lambda$ with respect to the operator $D_X$.
\end{thm}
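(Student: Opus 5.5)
The claim reduces to the earlier computation $\mathcal{L}_X^mR=D_X^m(\lambda)\,\eta\owedge g$ for every $m\ge0$. This combines the proposition expressing $\mathcal{L}_X^mR=q_m\eta\owedge g$ with the proposition identifying $q_m=D_X^m(\lambda)$, where $D_X=X+2a+6\varphi$. Given this, the argument is short. I will show that, for each $m$, $\mathcal{L}_X^mR=0$ holds if and only if $D_X^m(\lambda)=0$, pointwise on the relevant open set. Then the sets $\{m\ge1:\mathcal{L}_X^mR=0\}$ and $\{m\ge1:D_X^m(\lambda)=0\}$ coincide, so their minima coincide. In particular the nilpotence order of $R$ with respect to $X$ exists if and only if the nilpotence order of $\lambda$ with respect to $D_X$ exists, and the two are equal. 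Remark \ref{re} is exactly the statement at a fixed order $k+1$.

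The direction ``$D_X^m(\lambda)=0\Rightarrow\mathcal{L}_X^mR=0$'' is immediate from the formula. For the converse I need that $\eta\owedge g$ vanishes nowhere on $M_\xi$, so that $q_m\,\eta\owedge g=0$ forces $q_m=0$. I will show this by evaluation in an adapted orthonormal frame $e_1=\xi/\|\xi\|$, $e_2\in\xi^\perp$, which requires $n\ge2$. Exactly as in the sectional curvature remark, $(\eta\owedge g)(e_1,e_2,e_2,e_1)=\eta(e_1,e_1)g(e_2,e_2)=\|\xi\|^2\neq0$. Hence on $M_\xi$ the tensor $\eta\owedge g$ spans a free rank-one $C^\infty$-module, consistent with the rank-one remark. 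This gives $\mathcal{L}_X^mR=0 \iff q_m=0$ on $M_\xi$.

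The main obstacle is what happens off $M_\xi$, that is, where $\xi=0$. There $\eta\owedge g=0$ and $R=0$, so the curvature condition carries no information about $\lambda$. I would handle this by restricting the statement to $M_\xi$, which I take to be the implicit setting, since $\lambda$ is only geometrically meaningful where $\xi\ne0$. An alternative is to assume $M_\xi$ is dense: then $D_X^m(\lambda)$ is continuous and vanishes on a dense set, so it vanishes everywhere. I will make this density or restriction hypothesis explicit.

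With the pointwise equivalence in hand, I will conclude that $X$ is a Lie symmetry of the curvature of order $k+1$ if and only if $D_X^{k+1}(\lambda)=0$ and $D_X^k(\lambda)\ne0$. Therefore the minimal order of $R$ with respect to $X$ equals the minimal order of $\lambda$ with respect to $D_X$, which is the claimed equivalence.
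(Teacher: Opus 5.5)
Your proposal is correct and follows the paper's argument. Both use $\mathcal{L}_X^mR=q_m\,\eta\owedge g$ with $q_m=D_X^m(\lambda)$ to identify the index sets $\{m\ge1:\mathcal{L}_X^mR=0\}$ and $\{m\ge1:D_X^m(\lambda)=0\}$, and then take minima. Your check that $(\eta\owedge g)(e_1,e_2,e_2,e_1)=\|\xi\|^2\neq0$ on $M_\xi$, together with restricting to $M_\xi$ (or assuming $M_\xi$ dense and using continuity), supplies the step $q_m\,\eta\owedge g=0\Rightarrow q_m=0$, which the paper uses without comment.
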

\begin{proof}
    Given that $\mathcal{L}^mR=q_m\eta\owedge g$ and $D^m_X(\lambda)=q_m$, it follows that $q_m = 0 \iff D_X^m(\lambda) = 0$.

Consequently, the set of indices $m \ge 0$ for which $\mathcal{L}_X^m R = 0$ coincides exactly with the set of indices $m \ge 0$ for which $D_X^m(\lambda) = 0$. By taking the smallest element of these sets (the minimum), we conclude that the two minimal orders coincide:
$$\min \left\{ m \ge 1 : \mathcal{L}_X^m R = 0 \right\} = \min \left\{ k \ge 1 : D_X^m \lambda = 0 \right\}.$$
\end{proof}
\begin{ex}
    Consider a manifold locally equipped with coordinates $(t,x_2,\ldots,x_n)$ and assume that $X=\frac{\partial}{\partial t}.$
Assume also that $\varphi=-\frac{a}{3}$; we then have $$D_X=X \quad \text{and }\quad  \mathcal{L}_X^{m+1}R=\frac{\partial^{m+1}\lambda(t,x_2,.....,x_n)}{\partial t^{m+1}}
\eta\owedge g.$$

For a non-negative integer $k\ge 0$, let $\lambda(t,x_2,.....,x_n)=\sum\limits_{j=0}^{k}a_j(x_2,....,x_n)t^j$ with the condition $a_k(x_2,.....,x_n)\ne 0$. Then $\frac{\partial^{k+1}\lambda(t,x_2,.....,x_n)}{\partial t^{k+1}}=0$ but $\frac{\partial^k\lambda(t,x_2,.....,x_n)}{\partial t^{k}}
=k!a_k(x_2,.....,x_n)\ne0.$

We thus have $\mathcal{L}_X^{k+1}R=0,
\qquad
\mathcal{L}_X^kR\neq0.
$
\end{ex}
\begin{prop}
Let $\gamma(t)$ be an integral curve of the vector field $X$, and let $F_m(t) = q_m(\gamma(t))$, with the initial condition $F_0(t) = \lambda(\gamma(t))$. Then the sequence of functions $(F_m)_{m \in \mathbb{N}}$ satisfies the ordinary differential equation:
\begin{equation}\label{suite}
    F_{m+1}(t) = F_m'(t) + \alpha(\gamma(t)) \, F_m(t).
\end{equation}
\end{prop}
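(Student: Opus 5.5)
The proof is a direct restriction of the recurrence \eqref{eq:recurrence} to an integral curve of $X$, followed by the chain rule. I would begin by recalling that, by the preceding proposition, $q_m = D_X^m(\lambda)$ with $D_X = X + \alpha$ and $\alpha = 2a + 6\varphi$. Equivalently,
\[
q_{m+1} = X(q_m) + \alpha\, q_m
\]
holds pointwise on $M$ for every $m \geq 0$. Each $q_m$ is smooth: $q_0 = \lambda$ is smooth by hypothesis, and smoothness is preserved by differentiation along the smooth field $X$ and by multiplication by the smooth function $\alpha$. Hence $F_m = q_m \circ \gamma$ is a smooth function of $t$ on the domain of $\gamma$.

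The key step is to identify $F_m'(t)$ with $X(q_m)(\gamma(t))$. By the definition of an integral curve \eqref{eq:trajectoire}, $\gamma'(t) = X_{\gamma(t)}$. The chain rule then gives
\[
\frac{d}{dt}\, q_m(\gamma(t)) = (dq_m)_{\gamma(t)}\bigl(\gamma'(t)\bigr) = (dq_m)_{\gamma(t)}\bigl(X_{\gamma(t)}\bigr) = X(q_m)\bigl(\gamma(t)\bigr).
\]
This is the same argument already used in the corollary on $\operatorname{Scal}(\gamma(t))$.

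Next I evaluate the recurrence at the point $\gamma(t)$:
\[
F_{m+1}(t) = q_{m+1}(\gamma(t)) = X(q_m)(\gamma(t)) + \alpha(\gamma(t))\, q_m(\gamma(t)) = F_m'(t) + \alpha(\gamma(t))\, F_m(t),
\]
which is exactly \eqref{suite}. The initial condition $F_0(t) = \lambda(\gamma(t))$ is immediate from $q_0 = \lambda$.

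One can present this either as an induction on $m$ or as a single argument valid for each $m$, since the relation between $q_m$ and $q_{m+1}$ is already known from the previous proposition.

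There is no serious obstacle here. The only point needing care is that $\alpha$ and the $q_m$ are functions on $M$, so the coefficient in \eqref{suite} must be written as $\alpha(\gamma(t))$, and it varies with $t$. One should also note that the identity holds on the whole maximal interval of definition of $\gamma$.
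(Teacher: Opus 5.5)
Your proposal is correct and matches the paper's proof: evaluate the recurrence $q_{m+1}=X(q_m)+\alpha q_m$ at $\gamma(t)$, then use the chain rule with $\gamma'(t)=X_{\gamma(t)}$ to identify $X(q_m)(\gamma(t))$ with $F_m'(t)$. Your remark on the smoothness of the $q_m$ is a harmless detail that the paper leaves implicit.
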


\begin{proof}
By the definition of an integral curve, we have $\dot{\gamma}(t) = X_{\gamma(t)}$. Evaluating the recurrence relation at a point $\gamma(t)$ yields:
$$q_{m+1}(\gamma(t)) = (X(q_m))(\gamma(t)) + \alpha(\gamma(t))  q_m(\gamma(t))$$
Since the action of the field $X$ on a function along its integral curve corresponds to the ordinary derivative with respect to the parameter $t$, we have $(X q_m)(\gamma(t)) = \frac{\mathrm{d}}{\mathrm{d}t}\left(q_m(\gamma(t))\right) = F_m'(t)$. Setting $F_m(t) = q_m(\gamma(t))$ finally yields the result. 
\end{proof}

\begin{prop}
The general solution to the recurrence relation \eqref{suite} is given for all $m \in \mathbb{N}$ by:

$$F_m(t) =  \exp\left( -\int_0^t \alpha(\gamma(s)) \, \mathrm{d}s \right) \frac{\mathrm{d}^m}{\mathrm{d}t^m} \left(  \exp\left( \int_0^t \alpha(\gamma(s)) \, \mathrm{d}s \right) \lambda(\gamma(t)) \right).$$
\end{prop}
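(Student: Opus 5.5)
The plan is to conjugate the first-order operator $\frac{\mathrm{d}}{\mathrm{d}t}+\alpha(\gamma(t))$ into a pure derivative by an integrating factor, and then induct on $m$. Set
\[
A(t)=\int_0^t\alpha(\gamma(s))\,\mathrm{d}s ,
\]
so that $A'(t)=\alpha(\gamma(t))$ by the fundamental theorem of calculus. Here $\alpha=2a+6\varphi$ is smooth and $\gamma$ is smooth, so $A$ is smooth on the interval of definition of $\gamma$.

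The key identity, valid for any smooth function $F$ of $t$, is
\[
e^{-A(t)}\frac{\mathrm{d}}{\mathrm{d}t}\Big(e^{A(t)}F(t)\Big)=F'(t)+\alpha(\gamma(t))F(t).
\]
It follows from the product rule. Comparing with \eqref{suite} gives
\[
e^{A}F_{m+1}=\frac{\mathrm{d}}{\mathrm{d}t}\big(e^{A}F_m\big).
\]
So the sequence $G_m:=e^{A}F_m$ satisfies $G_{m+1}=G_m'$.

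I then argue by induction on $m$, as in the proof of Proposition \ref{prop:derivees}.
\begin{itemize}
\item For $m=0$, we have $G_0=e^{A}\lambda(\gamma(t))$, which is the claimed formula with zero derivatives.
\item Assuming $G_m=\frac{\mathrm{d}^m}{\mathrm{d}t^m}\big(e^{A}\lambda\circ\gamma\big)$, the relation $G_{m+1}=G_m'$ gives the same statement at order $m+1$.
\end{itemize}
Multiplying by $e^{-A(t)}$, which never vanishes, yields the stated expression for $F_m$ for every $m\in\mathbb{N}$. This follows from the recurrence established in the preceding proposition and the initial condition $F_0=\lambda\circ\gamma$.

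There is no real obstacle; the only point needing care is the meaning of "general solution." The recurrence \eqref{suite} determines all the $F_m$ once $F_0$ is fixed, so the formula is the unique solution with $F_0=\lambda\circ\gamma$. The choice of lower limit $0$ in $A$ is immaterial: changing it multiplies $e^{A}$ and $e^{-A}$ by reciprocal constants, which cancel in the formula. I would also note that the formula is local along $\gamma$, valid on the maximal interval where the integral curve is defined.
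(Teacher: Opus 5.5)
Your proof is correct and is essentially the paper's argument: the paper sets $I(t)=\exp\bigl(\int_0^t\alpha(\gamma(s))\,\mathrm{d}s\bigr)$ and $v_m=I F_m$, substitutes into \eqref{suite} to get $v_{m+1}=v_m'$, and concludes $F_m=I^{-1}\frac{\mathrm{d}^m}{\mathrm{d}t^m}(I\,\lambda\circ\gamma)$, which is exactly your reduction via $G_m=e^{A}F_m$. Your remarks that the formula is the unique solution once $F_0=\lambda\circ\gamma$ is fixed, and that the lower limit of integration is irrelevant, are correct and make precise the paper's loose use of ``general solution.''
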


\begin{proof}
Let us let $I(t)= \exp\left(\int_0^t \alpha(\gamma(s)) \, \mathrm{d}s \right)$ such that $I'(t) = \alpha(\gamma(t))I(t)$, and let us set a new variable $v_k(t) = I(t)F_m(t)$, i.e. $F_m(t) = \frac{v_m(t)}{I(t)}$.

By injecting this expression into the recurrence relation for rank $m+1$ given by \eqref{suite} we obtain:

$$v_{m+1}(t) = I(t) \left[ \left(\frac{v_m(t)}{I(t)}\right)' + \alpha (\gamma(t))\frac{v_m(t)}{I(t)} \right]$$

By developing the derivative of the quotient:

$$\left(\frac{v_m(t)}{I(t)}\right)' = \frac{v_m'(t) I(t) - v_m(t) I'(t)}{I^2(t)} = \frac{v_m'(t)}{I(t)} - \alpha(\gamma(t)) \frac{v_m(t)}{I(t)}.$$

It then comes:

$$v_{m+1}(t) = I(t) \left( \frac{v_m'(t)}{I(t)} - \alpha(\gamma(t)) \frac{v_m(t)}{I(t)} + \alpha(\gamma(t)) \frac{v_m(t)}{I(t)} \right) = v_m'(t)$$

The recurrence is thus simplified into a simple successive derivation:

$$v_m(t) = \frac{\mathrm{d}^m}{\mathrm{d}t^m} v_0(t)$$

Since $v_0(t) = I(t)F_0(t) = I(t)\lambda(\gamma(t))$, we obtain $v_m(t) = \frac{\mathrm{d}^m}{\mathrm{d}t^m}\left( I(t)\lambda(t) \right)$. Returning to $F_m(t) = \frac{v_m(t)}{I(t)}$, we conclude:

$$F_m(t) = \frac{1}{I(t)} \frac{\mathrm{d}^m}{\mathrm{d}t^m} \left( I(t) \lambda(\gamma(t)) \right).$$ This yields the result.
\end{proof}

\begin{thm}
Let $\gamma(t)$ be an integral curve of a vector field $X$ such that $F_m(t) \neq 0$ and $F_{m+1}(t) = 0$ for some integer $m\ge 0$. Then, the function $\lambda$ along the curve $\gamma(t)$ is given by:
$$\lambda(\gamma(t)) = \exp\left(-\int_0^t \alpha(\gamma(s))\,\mathrm{d}s\right)\sum_{j=0}^mp_jt^j$$
where the $p_j$ are real constants such that $p_k$ is non-zero.
\end{thm}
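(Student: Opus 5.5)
The plan is to use the explicit solution of the recurrence \eqref{suite} proved in the preceding proposition. With $I(t)=\exp\left(\int_0^t\alpha(\gamma(s))\,\mathrm{d}s\right)$ and $v_m(t)=I(t)F_m(t)$, that proposition shows $v_{m+1}=v_m'$. Hence $v_m=v_0^{(m)}$ with $v_0(t)=I(t)\lambda(\gamma(t))$, and
$$F_m(t)=\frac{1}{I(t)}\frac{\mathrm{d}^m}{\mathrm{d}t^m}\Big(I(t)\lambda(\gamma(t))\Big).$$
Since $I(t)>0$ for every $t$ in the interval of definition of $\gamma$, the vanishing of $F_{m+1}$ is equivalent to the vanishing of $v_0^{(m+1)}$. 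We therefore reduce everything to the elementary ODE $v_0^{(m+1)}\equiv 0$.

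First, we interpret the hypothesis $F_{m+1}(t)=0$ as holding identically on the (connected) interval $J$ on which $\gamma$ is defined. Then $\frac{\mathrm{d}^{m+1}}{\mathrm{d}t^{m+1}}\big(I(t)\lambda(\gamma(t))\big)=0$ on $J$. Integrating $m+1$ times, or equivalently applying Taylor's formula with vanishing remainder, shows that $I(t)\lambda(\gamma(t))$ is a polynomial of degree at most $m$:
$$I(t)\lambda(\gamma(t))=\sum_{j=0}^m p_jt^j,\qquad p_j=\frac{v_0^{(j)}(0)}{j!}=\frac{I(0)F_j(0)}{j!}=\frac{F_j(0)}{j!},$$
using $I(0)=1$. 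Dividing by $I(t)$ gives the claimed formula $\lambda(\gamma(t))=\exp\left(-\int_0^t\alpha(\gamma(s))\,\mathrm{d}s\right)\sum_{j=0}^m p_jt^j$.

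Next comes the nondegeneracy of the leading coefficient; the statement's $p_k$ should be read as $p_m$. Because $v_0$ is a polynomial of degree at most $m$, its $m$-th derivative is the constant $v_m(t)=m!\,p_m$. The hypothesis $F_m(t)\neq0$ gives $v_m(t)=I(t)F_m(t)\neq0$, hence $p_m\neq0$. A single point where $F_m\neq0$ already suffices, since $v_m$ is constant. So the polynomial has exact degree $m$.

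The only delicate point is the quantifier in the hypothesis. If $F_{m+1}$ vanished only at isolated values of $t$, the argument would fail, so we must read $F_{m+1}\equiv0$ along $\gamma$. This reading is natural, since it is what $\mathcal{L}_X^{m+1}R=0$, i.e.\ $q_{m+1}=0$ by the earlier identification $\mathcal{L}_X^mR=q_m\eta\owedge g$ and $q_m=D_X^m(\lambda)$, gives when restricted to $\gamma$. We will also note that connectedness of $J$ is needed to integrate the ODE to a single global polynomial. Both points are routine once stated.
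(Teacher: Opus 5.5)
Your proposal is correct and follows essentially the same route as the paper. Both arguments use the integrating-factor formula $F_m=I^{-1}\frac{\mathrm{d}^m}{\mathrm{d}t^m}\big(I\,\lambda\circ\gamma\big)$ to turn $F_{m+1}=0$ into $\frac{\mathrm{d}^{m+1}}{\mathrm{d}t^{m+1}}\big(I\,\lambda\circ\gamma\big)=0$. They then use $F_m\neq0$ to see that the constant $m$-th derivative (the paper's $C$) is nonzero, so the leading coefficient $p_m$, which the paper misprints as ``$p_k$'', is nonzero. Your remarks that $F_{m+1}$ must vanish identically on the connected parameter interval, that one point with $F_m\neq0$ suffices, and that $p_j=F_j(0)/j!$ are correct refinements of what the paper leaves implicit.
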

\begin{proof}
The condition $F_{m+1}(t) = 0$ translates to

$$ \exp\left( -\int_0^t \alpha(\gamma(s)) \, \mathrm{d}s \right) \frac{\mathrm{d}^{m+1}}{\mathrm{d}t^{m+1}} \left(  \exp\left( \int_0^t \alpha(\gamma(s)) \, \mathrm{d}s \right) \lambda(\gamma(t)) \right)=0$$
so $$ \frac{\mathrm{d}}{\mathrm{d}t}\left(\frac{\mathrm{d}^{m}}{\mathrm{d}t^{m}} \left(  \exp\left( \int_0^t \alpha(\gamma(s)) \, \mathrm{d}s \right) \lambda(\gamma(t)) \right)\right)=0$$
Since $F_k(t) \neq 0$, there exists a non-zero real constant $C$ such that $$\frac{\mathrm{d}^{m}}{\mathrm{d}t^m} \left(  \exp\left( \int_0^t \alpha(\gamma(s))  \mathrm{d}s \right) \lambda(\gamma(t)) \right)=C.$$
This yields  $$ \exp\left( \int_0^t \alpha(\gamma(s)) \, \mathrm{d}s \right) \lambda(\gamma(t))=Ct^m+\sum_{j=0}^{m-1}p_jt^j$$  yielding the result.
\end{proof}
\begin{rem}
The dynamical condition $\mathcal{L}_X^kR = R$—for a fixed natural number $k$ along the infinitesimal flow preserving the line distribution $\mathcal{D} =\operatorname{Span}\{\xi\}$ of a conformal vector field $X$ imposes a rigid compatibility between the algebraic structure of $R$, the scalar factor $\lambda$, the 1-form $\xi^\flat$, and the Riemannian geometry of $M$.

\begin{itemize}[label=$\bullet$]
    \item The operator equation $(\mathcal{L}_X^k - \text{Id})R = 0$ implies that the components of $R$ lie in the kernel of a polynomial whose characteristic roots satisfy $r^k = 1$.
    \begin{itemize}[label=$\star$]
        \item If $k = 1$ ($\mathcal{L}_X R = R$), the curvature tensor undergoes pure exponential dilation along the flow orbits: $\Phi_t^* R = e^t R$.
        \item If $k$ is even, decay or oscillatory behaviors ($r = -1$) are possible, whereas if $k$ is odd, only exponential growth ($r= 1$) occurs.
    \end{itemize}
    
    \item Due to the positivity and non-degeneracy of the Riemannian metric $g$, this recurrence condition strongly constrains the geometry of the manifold; it extends via contraction to the derived tensors $\text{Ric}$ and the scalar curvature, and yields the relation $D^k_X(\lambda)=\lambda.$
\end{itemize}
\end{rem}
\begin{prop} 
\begin{enumerate}
    \item $\mathcal{L}^2_XR=R$ if and only if
\begin{equation}
X^2(\lambda)+2\alpha X(\lambda)+\Big(X(\alpha)+\alpha^2-1\Big)\lambda=0.
\label{d2}
\end{equation}
\item $\mathcal{L}^3_XR=R$ if and only if \begin{equation}
X^3(\lambda)+3\alpha X^2(\lambda)+\Big(3\alpha^2 +X(\alpha)\Big)X(\lambda)+\Big(X^2(\alpha)+3\alpha X(\alpha)+\alpha^3-1\Big)\lambda=0.
\label{d3}
\end{equation}
\end{enumerate}
\end{prop}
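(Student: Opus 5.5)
The plan is to reduce both statements to scalar identities via the representation $\mathcal{L}_X^mR=q_m\,\eta\owedge g$ with $q_m=D_X^m(\lambda)$, where $D_X=X+\alpha$ and $\alpha=2a+6\varphi$, established in the preceding propositions. The reduction rests on the fact that $\eta\owedge g$ is nowhere zero on $M_\xi$ as soon as $n\ge 2$. Indeed, $(\eta\owedge g)(u,e,e,u)=\|\xi\|^2$ for $u=\xi/\|\xi\|$ and $e\perp\xi$ a unit vector. Consequently, for a function $f$,
\[
f\,\eta\owedge g=0 \iff f=0 \quad\text{on } M_\xi .
\]
Since $R=\lambda\,\eta\owedge g$ by \eqref{eq1}, the condition $\mathcal{L}_X^kR=R$ reads $(q_k-\lambda)\,\eta\owedge g=0$. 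This is equivalent to $D_X^k(\lambda)=\lambda$, as anticipated in the preceding remark.

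The remaining work is to expand $D_X^2(\lambda)$ and $D_X^3(\lambda)$ with the Leibniz rule $X(fh)=X(f)h+fX(h)$. First,
\[
D_X(\lambda)=X(\lambda)+\alpha\lambda .
\]
Applying $D_X$ once more gives
\[
D_X^2(\lambda)=X^2(\lambda)+X(\alpha)\lambda+\alpha X(\lambda)+\alpha X(\lambda)+\alpha^2\lambda .
\]
Setting $D_X^2(\lambda)-\lambda=0$ then yields exactly \eqref{d2}.

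For the third order, apply $D_X$ to
\[
D_X^2(\lambda)=X^2(\lambda)+2\alpha X(\lambda)+\big(X(\alpha)+\alpha^2\big)\lambda .
\]
The derivative part is
\[
X^3(\lambda)+2X(\alpha)X(\lambda)+2\alpha X^2(\lambda)+\big(X^2(\alpha)+2\alpha X(\alpha)\big)\lambda+\big(X(\alpha)+\alpha^2\big)X(\lambda).
\]
Adding $\alpha D_X^2(\lambda)$ and collecting terms gives:
\begin{itemize}
\item coefficient $3\alpha$ on $X^2(\lambda)$;
\item coefficient $3\alpha^2+3X(\alpha)$ on $X(\lambda)$;
\item coefficient $X^2(\alpha)+3\alpha X(\alpha)+\alpha^3$ on $\lambda$.
\end{itemize}
Subtracting $\lambda$ produces the identity \eqref{d3}, except for the coefficient of $X(\lambda)$.

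The main obstacle is this coefficient. My computation gives $3X(\alpha)$ rather than the $X(\alpha)$ printed in \eqref{d3}. I would recheck it against the one-dimensional model $X=\frac{d}{dt}$, $\alpha=\alpha(t)$, where
\[
\big(\tfrac{d}{dt}+\alpha\big)^3\lambda=\lambda'''+3\alpha\lambda''+(3\alpha^2+3\alpha')\lambda'+(\alpha''+3\alpha\alpha'+\alpha^3)\lambda .
\]
This confirms the factor $3X(\alpha)$, and I would state the second equivalence with that coefficient, noting the discrepancy. A second, minor point is that the equivalences hold on $M_\xi$. On its complement both sides vanish automatically because $\eta=0$ there, so the condition is vacuous and the statement should be read on $M_\xi$ or with $\xi$ nowhere zero.
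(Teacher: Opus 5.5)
Your argument follows the paper's: you reduce $\mathcal{L}_X^kR=R$ to $D_X^k(\lambda)=\lambda$ through $\mathcal{L}_X^mR=D_X^m(\lambda)\,\eta\owedge g$ and expand with the Leibniz rule. Your observation that $\eta\owedge g$ is nowhere zero on $M_\xi$ justifies that reduction and fixes its domain, which the paper only asserts.

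Your coefficient $3\alpha^2+3X(\alpha)$ is correct. The paper's own expansion contains $2X(\alpha)X(\lambda)+X(\alpha)X(\lambda)$ but collects it as $X(\alpha)X(\lambda)$. So \eqref{d3} as printed differs from the true condition by $2X(\alpha)X(\lambda)=12X(\varphi)X(\lambda)$, and it coincides with it only where this term vanishes (e.g. for homothetic $X$). Your corrected version is the right statement.
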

\begin{proof}
   We have  $\mathcal{L}^k_XR=R\iff D_X^k(\lambda)=\lambda$  so
    \begin{enumerate}
        \item $\mathcal{L}^2_XR=R\iff D_X^2(\lambda)=\lambda.$ $D_X^2(\lambda)=D_X(D_X(\lambda)).$
Since $D_X(\lambda)=X(\lambda)+\alpha \lambda$, we obtain  $$D_X^2(\lambda)=(X+\alpha)(X(\lambda)+\alpha\lambda)=X(X(\lambda))+X(\alpha\lambda)+\alpha X(\lambda)+\alpha^2\lambda$$
while $X(\alpha\lambda)=X(\alpha)\lambda+\alpha X(\lambda).$

Consequently, $$D_X^2(\lambda)
=X^2(\lambda)+2\alpha X(\lambda)+\Big(X(\alpha)+\alpha^2\bigr)\lambda.$$
The equation $D_X^2(\lambda)=\lambda$ then yields $X^2(\lambda)+2\alpha X(\lambda)+\Big(X(\alpha)+\alpha^2-1\Big)\lambda=0.$
\item $\mathcal{L}^3_XR=R\iff D^3_X(\lambda)=\lambda$. For this reason $$\lambda=D_X\Big(D^2_X(\lambda)\Big)=D_X\left(X^2(\lambda)+2\alpha X(\lambda)+\Big(X(\alpha)+\alpha^2\bigr)\lambda\right)$$ so $$\lambda=(X+\alpha)\Big(X^2(\lambda)+2\alpha X(\lambda)+\Big(X(\alpha)+\alpha^2\Big)\lambda\Big)$$ which results in \begin{align*}
    \lambda&=X^3(\lambda)+2X(\alpha)X(\lambda)+2\alpha X^2(\lambda)+\lambda X^2(\alpha)+X(\alpha)X(\lambda)+\alpha^2X(\lambda)+X(\alpha^2)\lambda\\
    &+\alpha X^2(\lambda)+2\alpha^2 X(\lambda)+\Big(\alpha X(\alpha)+\alpha^3\Big)\lambda
\end{align*} while $X(\alpha^2)=2\alpha X(\alpha)$ so $$\lambda=X^3(\lambda)+3\alpha X^2(\lambda)+\Big(3\alpha^2 +X(\alpha)\Big)X(\lambda)+\Big(X^2(\alpha)+3\alpha X(\alpha)+\alpha^3\Big)\lambda$$
    \end{enumerate}
\end{proof}
\begin{rem}
    The differential equations \eqref{d2} and \eqref{d3} clearly show that the direct expansion successively yields the derivatives $$X(\lambda),\quad X^2(\lambda),\quad X^3(\lambda),....$$
    For an arbitrary order, it is therefore preferable to abandon this approach and instead directly exploit the structure of the flow of $X$.
\end{rem}
Let $\Phi_t$ denote the local flow of $X$. For a point $p \in M$, the integral curve passing through $p$ is given by $\gamma_p(t)=\Phi_t(p),$
and satisfies
\begin{equation}
\gamma_p'(t)=X_{\gamma_p(t)}.
\label{eq:gamma}
\end{equation}

\begin{lem}\label{l1}
Let $f\in C^\infty(M)$ and $h_p(t)=f(\Phi_t(p))$. SO
\begin{equation}
h_p'(t)=X(f)(\Phi_t(p)).
\label{eq:derivative-flow}
\end{equation}
\end{lem}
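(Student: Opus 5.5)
The statement is just the chain rule applied along the integral curve $t\mapsto \Phi_t(p)$. I will use only the definition of the local flow in \eqref{eq:flow-zero} together with the characterization of a vector field acting on a function as a directional derivative, $X(f)(q)=\mathrm{d}f_q(X_q)$.

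First, fix $p\in M$ and set $\gamma_p(t)=\Phi_t(p)$, so that $h_p=f\circ\gamma_p$ is a smooth real function on the interval where the flow is defined. By the chain rule, $h_p'(t)=\mathrm{d}f_{\gamma_p(t)}\big(\gamma_p'(t)\big)$. Second, substitute $\gamma_p'(t)=X_{\gamma_p(t)}$ from \eqref{eq:gamma}, which gives $h_p'(t)=\mathrm{d}f_{\gamma_p(t)}(X_{\gamma_p(t)})=X(f)(\gamma_p(t))=X(f)(\Phi_t(p))$.

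Alternatively, one can use the group law of the flow, $\Phi_{t+s}=\Phi_s\circ\Phi_t$, and write
\[
h_p'(t)=\left.\frac{d}{ds}\right|_{s=0} f\big(\Phi_s(\Phi_t(p))\big)=\big(\mathcal{L}_Xf\big)(\Phi_t(p)).
\]
This follows from \eqref{eq:def-lie} applied to functions at the point $\Phi_t(p)$, together with $\mathcal{L}_Xf=X(f)$, already used in the proof of Proposition \ref{PP1}. This second version has the advantage of matching Proposition \ref{prop:derivees} in the case $k=1$, evaluated at $p$ for the $0$-tensor $f$.

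No step is a real obstacle. The only point needing care is that $t$ must lie in the maximal interval $I_p$ where $\Phi_t(p)$ is defined, and, if the group law is used, that $s$ is small enough for $\Phi_s(\Phi_t(p))$ to be defined. These are standard facts about local flows. Iterating the lemma will later give $h_p^{(m)}(t)=X^m(f)(\Phi_t(p))$, which is presumably how it will be used for the operator $D_X$.
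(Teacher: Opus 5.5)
Your main argument is correct and is the paper's proof: the chain rule gives $h_p'(t)=\mathrm{d}f_{\Phi_t(p)}\bigl(\tfrac{\mathrm{d}}{\mathrm{d}t}\Phi_t(p)\bigr)$, and substituting $\tfrac{\mathrm{d}}{\mathrm{d}t}\Phi_t(p)=X_{\Phi_t(p)}$ yields $X(f)(\Phi_t(p))$. Your alternative argument via the group law $\Phi_{t+s}=\Phi_s\circ\Phi_t$ and $\mathcal{L}_Xf=X(f)$ is also valid, but it is not needed.
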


\begin{proof}
By the chain derivative rule,
$h_p'(t)=\mathrm{d}f_{\Phi_t(p)}\left(\frac{\mathrm{d}}{\mathrm{d} t}\Phi_t(p)
\right).$

Like $\frac{\mathrm{d}}{\mathrm{d} t}\Phi_t(p)=X_{\Phi_t(p)},$
we obtain $$h_p'(t)
=\mathrm{d} f_{\Phi_t(p)}
(X_{\Phi_t(p)})=
X(f)(\Phi_t(p)).$$
\end{proof}
Without loss of generality let us put \begin{equation}
\alpha_p(t)
=\alpha(\Phi_t(p))=6
\varphi(\Phi_t(p))+2a.
\label{eq:qp}
\end{equation} 
\begin{prop}
For any function $f\in C^\infty(M)$,
\begin{equation}
\Big(D_X (f)\Big)(\Phi_t(p))
=h_p'(t)+\alpha_p(t)h_p(t).
\label{eq:P-orbite}
\end{equation}
\end{prop}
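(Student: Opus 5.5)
The identity is simply the definition of $D_X$ evaluated at the point $\Phi_t(p)$, combined with Lemma \ref{l1}. Throughout, $h_p(t)=f(\Phi_t(p))$ as in Lemma \ref{l1}, and $\alpha_p$ is given by \eqref{eq:qp}.

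First, we unfold the operator. Recall that $D_X=X+\alpha$, where $\alpha=2a+6\varphi$, acts on a smooth function $f$ by
$$D_X(f)=X(f)+\alpha f.$$
Evaluating this equality of functions at the point $\Phi_t(p)$ gives
$$\Big(D_X(f)\Big)(\Phi_t(p))=X(f)(\Phi_t(p))+\alpha(\Phi_t(p))\,f(\Phi_t(p)).$$

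Second, we identify each term along the orbit.
\begin{itemize}
\item For the first term, Lemma \ref{l1} gives $X(f)(\Phi_t(p))=h_p'(t)$. This uses only the flow property $\frac{\mathrm{d}}{\mathrm{d}t}\Phi_t(p)=X_{\Phi_t(p)}$ from \eqref{eq:flow-zero} together with the chain rule.
\item For the second term, \eqref{eq:qp} gives $\alpha(\Phi_t(p))=\alpha_p(t)$, and by definition $f(\Phi_t(p))=h_p(t)$.
\end{itemize}
Substituting both into the evaluated identity yields \eqref{eq:P-orbite}.

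There is no genuine obstacle here. The only point requiring care is that $t$ must lie in the interval on which the local flow through $p$ is defined, so that $\Phi_t(p)$ makes sense and $h_p$ is differentiable there. This is implicit in the definition of the integral curve $\gamma_p$, so no further argument is needed.
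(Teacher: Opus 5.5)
Your proof is correct and follows the paper's argument exactly. You unfold $D_X(f)=X(f)+\alpha f$ at $\Phi_t(p)$, replace $X(f)(\Phi_t(p))$ by $h_p'(t)$ via Lemma~\ref{l1}, and identify $\alpha(\Phi_t(p))=\alpha_p(t)$ and $f(\Phi_t(p))=h_p(t)$; your remark about restricting $t$ to the domain of the local flow is a harmless clarification the paper leaves implicit.
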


\begin{proof}
By definition, $D_X(f)=X(f)+\alpha f.$

Evaluating at the point $\Phi_t(p)$, $(D_X(f))(\Phi_t(p))=X(f)(\Phi_t(p))+\alpha(\Phi_t(p))f(\Phi_t(p)).$
By lemma \ref{l1}, $X(f)(\Phi_t(p))=h_p'(t).$ Thus $(D_X(f))(\Phi_t(p))
=
h_p'(t)+\alpha_p(t)h_p(t).$
\end{proof}
\begin{thm}\label{th1}
  For a fixed non-negative integer $k$, the relation $\mathcal{L}^k_X R=R$ along the orbit is equivalent to the ordinary differential equation
\begin{equation}
\left(
\frac{\mathrm{d}}{\mathrm{d}t}+\alpha_p(t)
\right)^k
u_p(t)
=u_p(t),
\label{eq:ODE-orbit} 
\end{equation} where $u_p(t)=\lambda(\Phi_t(p))$
\end{thm}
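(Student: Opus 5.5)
The argument reduces the tensorial identity to a scalar one and then transports that scalar identity along the flow. By the proposition giving $\mathcal{L}_X^mR=q_m\,\eta\owedge g$ together with $q_m=D_X^m(\lambda)$, we have $\mathcal{L}_X^kR=D_X^k(\lambda)\,\eta\owedge g$, while $R=\lambda\,\eta\owedge g$. Hence $\mathcal{L}_X^kR=R$ is equivalent to $\bigl(D_X^k(\lambda)-\lambda\bigr)\eta\owedge g=0$.

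To cancel the factor $\eta\owedge g$, I work on $M_\xi$, where this tensor is nowhere zero. Indeed, for $Y\perp\xi$ unit, $(\eta\owedge g)(\xi,Y,Y,\xi)=\|\xi\|^4\neq0$. It follows that $\mathcal{L}_X^kR=R$ holds if and only if $D_X^k(\lambda)=\lambda$ pointwise. This is the relation already stated in the preceding remark. Equivalently, the identity holds along each orbit $\Phi_t(p)$ if and only if $\bigl(D_X^k\lambda\bigr)(\Phi_t(p))=\lambda(\Phi_t(p))=u_p(t)$ for all $t$ in the flow interval.

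The core step is an induction on $m$ showing that for every $f\in C^\infty(M)$ and every $m\ge0$,
\[
\bigl(D_X^m f\bigr)(\Phi_t(p))=\Bigl(\tfrac{\mathrm{d}}{\mathrm{d}t}+\alpha_p(t)\Bigr)^m h_p(t),\qquad h_p(t)=f(\Phi_t(p)).
\]
The case $m=0$ is trivial, and $m=1$ is exactly \eqref{eq:P-orbite}. For the inductive step, apply \eqref{eq:P-orbite} to the smooth function $g_m:=D_X^m f$. The orbit function of $g_m$ is, by the induction hypothesis, $\bigl(\tfrac{\mathrm{d}}{\mathrm{d}t}+\alpha_p\bigr)^m h_p$. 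Therefore
\[
(D_Xg_m)(\Phi_t(p))=\Bigl(\tfrac{\mathrm{d}}{\mathrm{d}t}+\alpha_p\Bigr)\Bigl(\tfrac{\mathrm{d}}{\mathrm{d}t}+\alpha_p\Bigr)^m h_p .
\]
The only point needing care is that $\alpha_p(t)=\alpha(\Phi_t(p))$ depends on $t$, so the operator $\tfrac{\mathrm{d}}{\mathrm{d}t}+\alpha_p$ must be applied as a composition and not expanded binomially. Since we only ever compose it with itself, no commutation issue arises.

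Taking $f=\lambda$ turns the orbit condition into \eqref{eq:ODE-orbit}. For the converse, note that every point of $M$ equals $\Phi_0(p)$ for some $p$, namely $p$ itself. So if \eqref{eq:ODE-orbit} holds for all $p$, evaluating at $t=0$ gives $D_X^k\lambda=\lambda$ everywhere, and hence $\mathcal{L}_X^kR=R$.

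I expect the main obstacle to be the cancellation of $\eta\owedge g$. It is only valid on $M_\xi$, so the statement should be understood there, or else one must invoke continuity and density of $M_\xi$ to extend it. The remaining steps are routine.
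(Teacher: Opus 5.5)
Your proof is correct and is essentially the paper's argument: the same induction transports $D_X^m f$ along the orbit to $\bigl(\frac{\mathrm{d}}{\mathrm{d}t}+\alpha_p(t)\bigr)^m h_p(t)$ via \eqref{eq:P-orbite}, and then you specialize to $f=\lambda$. Your explicit cancellation of $\eta\owedge g$ on $M_\xi$ adds care that the paper lacks, since the paper takes $\mathcal{L}_X^kR=R\iff D_X^k(\lambda)=\lambda$ for granted and never addresses the zero set of $\xi$; note also that $[X,\xi]=a\xi$ gives $\xi_{\Phi_t(p)}=e^{at}(\mathrm{d}\Phi_t)_p\xi_p$, so orbits never leave $M_\xi$ and your restriction to $M_\xi$ is consistent.
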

\begin{proof} Let us prove by induction that for every integer $m \ge 1$, we have: $$\Big(D^m_X (f)\Big)(\Phi_t(p))=\left(
\frac{\mathrm{d}}{\mathrm{d}t}+\alpha_p(t)
\right)^m
h_p(t)
$$
For $m=0$, the statement $f(\Phi_t(p))=h_p(t)$ holds, and for $m=1$, the result is exactly \eqref{eq:P-orbite}. Suppose that $$D_X^m(f(\Phi_t(p)))=\left(
\frac{\mathrm{d}}{\mathrm{d} t}+\alpha_p(t)
\right)^m h_p(t).$$
Applying $D_X$ again and using
\eqref{eq:P-orbite} on the function $D_X^m(\lambda)$, we obtain
$$D_X^{m+1}(f)(\Phi_t(p))
=\left(
\frac{\mathrm{d}}{\mathrm{d} t}+\alpha_p(t)
\right)
\left[
\left(
\frac{\mathrm{d}}{\mathrm{d} t}+\alpha_p(t)
\right)^m h_p(t)
\right].$$
Thus, $D_X^{m+1}(f(\Phi_t(p)))=\left(\frac{\mathrm{d}}{\mathrm{d} t}+\alpha_p(t)
\right)^{m+1}h_p(t).$

By induction,
$$D_X^m(f)(\Phi_t(p))=\left(
\frac{\mathrm{d}}{\mathrm{d} t}+\alpha_p(t)
\right)^m h_p(t).$$ In particular, taking the function $\lambda$, the relation $D_X^k(\lambda(\Phi_t(p)))=\lambda(\Phi_t(p))$ then yields \eqref{eq:ODE-orbit}.
\end{proof}
\begin{rem}
    The presence of $\alpha_p(t)$ in \eqref{eq:ODE-orbit} might seem to
complicate the problem. However, this coefficient can be eliminated
exactly by an integrating factor.

Let us fix an interval $\mathcal{I}$ on which the orbit $\gamma_p$ is defined and
choose $t_0\in \mathcal{I}$. Let $\beta_p(t)
=
\int_{t_0}^{t}\alpha_p(s)\mathrm{d} s$ so $\beta_p'(t)=\alpha_p(t).$
\end{rem}
\begin{lem}
For any function $f\in C^\infty(\mathcal{I})$,
\begin{equation}
\left(
\frac{\mathrm{d}}{\mathrm{d} t}+\alpha_p(t)
\right)
\left(
e^{-\beta_p(t)}f(t)
\right)
=
e^{-\beta_p(t)}f'(t).
\label{eq:facteur}
\end{equation}
\end{lem}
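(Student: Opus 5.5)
The identity is a direct product-rule computation, and I will carry it out in that form. Fix $f\in C^\infty(\mathcal{I})$ and regard $e^{-\beta_p(t)}f(t)$ as a product of two smooth functions of $t$ on $\mathcal{I}$. Both factors are smooth: $\beta_p$ is $C^1$, with $\beta_p'=\alpha_p$ by the fundamental theorem of calculus, and $\alpha_p(t)=6\varphi(\Phi_t(p))+2a$ is smooth in $t$, being a smooth function composed with the smooth orbit $\gamma_p$.

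The key steps are as follows.
\begin{enumerate}
\item Differentiate the product using the chain rule, $\frac{\mathrm{d}}{\mathrm{d}t}e^{-\beta_p(t)}=-\beta_p'(t)e^{-\beta_p(t)}=-\alpha_p(t)e^{-\beta_p(t)}$, which gives
\[
\frac{\mathrm{d}}{\mathrm{d}t}\left(e^{-\beta_p(t)}f(t)\right)=-\alpha_p(t)e^{-\beta_p(t)}f(t)+e^{-\beta_p(t)}f'(t).
\]
\item Add the zeroth-order term $\alpha_p(t)e^{-\beta_p(t)}f(t)$ coming from the operator $\frac{\mathrm{d}}{\mathrm{d}t}+\alpha_p(t)$.
\item Observe that the two terms proportional to $\alpha_p(t)e^{-\beta_p(t)}f(t)$ cancel exactly, which leaves $e^{-\beta_p(t)}f'(t)$. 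This is \eqref{eq:facteur}.
\end{enumerate}

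There is no genuine obstacle. The only point that needs care is the sign convention: the integrating factor $e^{-\beta_p}$ multiplies $f$ inside the operator, and it is this choice that produces the cancellation. It also makes the argument analogous to the computation with $I(t)$ in the earlier proposition giving the general solution of \eqref{suite}.

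I would then note, as a remark for later use, what happens when the operator is iterated. Applying the lemma $m$ times, by induction on $m$, yields
\[
\left(\frac{\mathrm{d}}{\mathrm{d}t}+\alpha_p\right)^m\left(e^{-\beta_p}f\right)=e^{-\beta_p}f^{(m)}.
\]
Combined with Theorem \ref{th1}, with $u_p=e^{-\beta_p}w_p$, this turns \eqref{eq:ODE-orbit} into the constant-coefficient equation $w_p^{(k)}=w_p$.
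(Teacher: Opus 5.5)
Your proof is correct and follows the paper's own argument: you differentiate $e^{-\beta_p}f$ with the product and chain rules using $\beta_p'=\alpha_p$, and the resulting term $-\alpha_p e^{-\beta_p}f$ cancels against the zeroth-order term of the operator, leaving $e^{-\beta_p}f'$. Your closing remark is also correct: iterating gives $\left(\frac{\mathrm{d}}{\mathrm{d}t}+\alpha_p\right)^m\left(e^{-\beta_p}f\right)=e^{-\beta_p}f^{(m)}$, and substituting $u_p=e^{-\beta_p}w_p$ turns \eqref{eq:ODE-orbit} into $w_p^{(k)}=w_p$, with consistent signs.
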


\begin{proof}
We have : $$\frac{\mathrm{d}}{\mathrm{d} t}
\left(
e^{-\beta_p(t)}f(t)
\right)=-\beta_p'(t)e^{-\beta_p(t)}v(t)+e^{-\beta_p(t)}f'(t).$$
Since $\beta_p'=\alpha_p$,
$$\frac{\mathrm{d}}{\mathrm{d} t}
\left(e^{-\beta_p(t)}f(t)
\right)=-\alpha_p(t)e^{-\beta_p(t)}f(t)+e^{-\beta_p(t)}f(t)'.$$
we obtain $$\left(
\frac{\mathrm{d}}{\mathrm{d} t}+\alpha_p(t)
\right)(e^{-\beta_p(t)}f(t))=e^{-\beta_p(t)}f(t)'.$$
\end{proof}
\begin{thm}\label{th2}
On any integral orbit of $X$, the operator identity holds
\begin{equation}
\frac{\mathrm{d}}{\mathrm{d} t}+\alpha_p(t)
=e^{-\beta_p(t)}
\frac{\mathrm{d}}{\mathrm{d} t}e^{\beta_p(t)}.
\label{eq:operator-conjugation}
\end{equation}
Therefore, for all $m\in\mathbb{N}^*$,
\begin{equation}
\left(
\frac{\mathrm{d}}{\mathrm{d} t}+\alpha_p(t)
\right)^m
=
e^{-\beta_p(t)}
\frac{\mathrm{d}^m}{\mathrm{d}t^m}
e^{\beta_p(t)}.
\label{eq:conjugation-power}
\end{equation}
\end{thm}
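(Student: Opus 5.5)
The plan is to prove \eqref{eq:operator-conjugation} first, as an identity between operators acting on $C^\infty(\mathcal{I})$, and then get \eqref{eq:conjugation-power} by induction on $m$. Here the right-hand side $e^{-\beta_p}\frac{\mathrm{d}}{\mathrm{d}t}e^{\beta_p}$ means the composition: multiply by $e^{\beta_p(t)}$, differentiate, then multiply by $e^{-\beta_p(t)}$.

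For the first identity I take an arbitrary $f\in C^\infty(\mathcal{I})$ and put $g=e^{\beta_p}f$, so that $f=e^{-\beta_p}g$. Applying the preceding lemma \eqref{eq:facteur} to $g$ gives
\[
\left(\frac{\mathrm{d}}{\mathrm{d}t}+\alpha_p(t)\right)f=\left(\frac{\mathrm{d}}{\mathrm{d}t}+\alpha_p(t)\right)\left(e^{-\beta_p(t)}g(t)\right)=e^{-\beta_p(t)}g'(t)=e^{-\beta_p(t)}\frac{\mathrm{d}}{\mathrm{d}t}\left(e^{\beta_p(t)}f(t)\right).
\]
One can also check this directly from the product rule and $\beta_p'=\alpha_p$. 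Since $f$ is arbitrary, this is exactly \eqref{eq:operator-conjugation}.

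For the powers I argue by induction on $m\geq1$; the case $m=1$ is the identity just proved. Assume $\left(\frac{\mathrm{d}}{\mathrm{d}t}+\alpha_p\right)^m f=e^{-\beta_p}\frac{\mathrm{d}^m}{\mathrm{d}t^m}\left(e^{\beta_p}f\right)$ for all $f$. Apply the $m=1$ identity to the function $h=e^{-\beta_p}\frac{\mathrm{d}^m}{\mathrm{d}t^m}\left(e^{\beta_p}f\right)$. Then $e^{\beta_p}h=\frac{\mathrm{d}^m}{\mathrm{d}t^m}\left(e^{\beta_p}f\right)$, and differentiating once more gives $e^{-\beta_p}\frac{\mathrm{d}^{m+1}}{\mathrm{d}t^{m+1}}\left(e^{\beta_p}f\right)$. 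Structurally, the point is that in $\left(e^{-\beta_p}\frac{\mathrm{d}}{\mathrm{d}t}e^{\beta_p}\right)^m$ the inner factors $e^{\beta_p}e^{-\beta_p}$ cancel, leaving only the two outer conjugating factors.

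There is no real obstacle. The only care needed is notational: the right-hand sides must be read as compositions of operators, not as pointwise products. Smoothness on $\mathcal{I}$ is guaranteed because $\alpha_p=6\varphi\circ\Phi_t(p)+2a$ is smooth, so $\beta_p$ is $C^\infty$ and all $m$ derivatives exist.
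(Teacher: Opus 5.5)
Your proposal is correct and follows essentially the same route as the paper. You verify the conjugation identity $\frac{\mathrm{d}}{\mathrm{d}t}+\alpha_p=e^{-\beta_p}\frac{\mathrm{d}}{\mathrm{d}t}e^{\beta_p}$ by the product rule (routing it through the preceding lemma is equivalent), then get the $m$-th power from the cancellation of the inner factors $e^{\beta_p}e^{-\beta_p}$, and your explicit induction on $m$ simply makes rigorous the paper's \emph{compose this identity $m$ times} step.
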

\begin{proof}
The identity \eqref{eq:operator-conjugation} means that, for any
function $u\in\mathcal{C}^\infty(\mathcal{I})$, $\left(
\frac{\mathrm{d}}{\mathrm{d} t}+\alpha_p
\right)u
=e^{-\beta_p}
\frac{\mathrm{d}}{\mathrm{d} t}
\left(
e^{\beta_p}u
\right).$

Indeed,
$$e^{-\beta_p}
\frac{\mathrm{d}}{\mathrm{d} t}
\left(e^{\beta_p}u\right)=e^{-\beta_p}
\left(\beta_p'e^{\beta_p}u+e^{\beta_p}u'\right),$$
hence $e^{-\beta}
\frac{\mathrm{d}}{\mathrm{d} t}\left(e^{\beta_p}u\right)=\alpha_pu+u'.$
 So, $\frac{\mathrm{d}}{\mathrm{d} t}+\alpha_p=e^{-\beta_p}\frac{\mathrm{d}}{\mathrm{d} t}e^{\beta_p}.$

By calculation, we have: $$\left(\frac{\mathrm{d}}{\mathrm{d} t}+\alpha_p\right)^2=e^{-\beta_p}\frac{\mathrm{d}}{\mathrm{d} t}e^{\beta_p}e^{-\beta_p}\frac{\mathrm{d}}{\mathrm{d} t}e^{\beta_p}=e^{-\beta_p}\frac{\mathrm{d}^2}{\mathrm{d}^2 t}e^{\beta_p}.$$
By composing this identity $m$ times, the intermediate factors
$e^{\beta_p}e^{-\beta_p}$ cancel out: 
$$\left(e^{-\beta_p}D_te^{\beta_p}
\right)^m=e^{-\beta_p}D_t^me^{\beta_p},$$
where $D_t=\frac{\mathrm{d}}{\mathrm{d} t}.$
We thus obtain \eqref{eq:conjugation-power}.
\end{proof}
\begin{thm}\label{TH}
Let $k$ be a fixed non-negative integer; the expression for $\lambda$ when $\mathcal{L}_X^kR=R$ along an orbit $\gamma_p$ is given by: \begin{equation}
        \lambda(\Phi_t(p))
=\exp\left(
-\int_{t_0}^{t}
[6\varphi(\Phi_s(p))+2a]\mathrm{d} s
\right)\left(C_1 e^t + \delta_k C_2 e^{-t}
+
\sum_{j=1}^{\lfloor \frac{k-1}{2} \rfloor} e^{\cos\left(\frac{2\pi j}{k}\right) t} \psi_j\right)
    \end{equation} where $\psi_j= A_j \cos\left(\sin\left(\frac{2\pi j}{k}\right) t\right) + B_j\sin\left(\sin\left(\frac{2\pi j}{k}\right)t\right)$, $\delta_k = 1$ if $k$ is even and $0$ if $k$ is odd, and $C_1, C_2, A_j, B_j \in \mathbb{R}$ are constants determined by the conditions on the orbit under consideration.
\end{thm}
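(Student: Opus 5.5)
The plan is to reduce the tensor identity to a scalar ODE along the orbit, conjugate away the variable coefficient, and then solve a constant-coefficient linear ODE. By the proposition giving $\mathcal{L}_X^mR=q_m\eta\owedge g$ with $q_m=D_X^m(\lambda)$, together with the injectivity of $\zeta\mapsto\zeta\owedge g$ on symmetric tensors (used in Proposition \ref{P1}; here we work where $\eta\owedge g\neq0$, i.e.\ on $M_\xi$), the relation $\mathcal{L}_X^kR=R$ is equivalent to $D_X^k(\lambda)=\lambda$. By Theorem \ref{th1}, along the orbit $\gamma_p$ this becomes the ODE $\left(\frac{\mathrm{d}}{\mathrm{d}t}+\alpha_p(t)\right)^k u_p=u_p$, where $u_p(t)=\lambda(\Phi_t(p))$ and $\alpha_p=6\varphi\circ\Phi_t+2a$.

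Next I would apply the conjugation identity of Theorem \ref{th2}. Set $w_p(t)=e^{\beta_p(t)}u_p(t)$ with $\beta_p(t)=\int_{t_0}^t\alpha_p(s)\,\mathrm{d}s$. Then $\left(\frac{\mathrm{d}}{\mathrm{d}t}+\alpha_p\right)^k u_p=e^{-\beta_p}w_p^{(k)}$. Multiplying both sides by $e^{\beta_p}$, the equation becomes the constant-coefficient linear ODE $w_p^{(k)}=w_p$. This step is routine once Theorem \ref{th2} is available.

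It remains to solve $w^{(k)}-w=0$ for $k\ge1$. The characteristic polynomial is $r^k-1$, whose roots are the simple $k$-th roots of unity $e^{2\pi i j/k}$, $j=0,\dots,k-1$. The root $r=1$ gives $C_1e^t$. The root $r=-1$ occurs exactly when $k$ is even, which accounts for the term $\delta_kC_2e^{-t}$. The remaining roots come in conjugate pairs $e^{\pm2\pi ij/k}$ with $1\le j\le\lfloor\frac{k-1}{2}\rfloor$. Since all roots are simple, each pair contributes the real solutions $e^{\cos(2\pi j/k)t}\big(A_j\cos(\sin(2\pi j/k)t)+B_j\sin(\sin(2\pi j/k)t)\big)$. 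A count check: $1+\delta_k+2\lfloor\frac{k-1}{2}\rfloor=k$ in both parities, so these $k$ functions form a basis of the $k$-dimensional real solution space. Finally, $u_p=e^{-\beta_p}w_p$ gives the stated formula, with the constants fixed by the initial data $w_p^{(i)}(t_0)$, $0\le i\le k-1$.

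The main obstacle is bookkeeping rather than analysis. I need to verify the pairing of conjugate roots and the index range of $j$, in particular that $j=k/2$ is excluded for even $k$ because it is the real root $-1$, already counted. I also need to check that the real form of each complex pair is correct. The degenerate case $k=0$ is trivial: the identity $\mathcal{L}_X^0R=R$ imposes no condition, so I would treat $k\ge1$. A minor point to state is that the reduction is valid on $M_\xi$ and along the maximal interval of definition of the orbit.
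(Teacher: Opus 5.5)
Your proposal is correct and follows the paper's proof essentially step for step. You reduce $\mathcal{L}_X^kR=R$ to $D_X^k(\lambda)=\lambda$ and, via Theorem~\ref{th1}, to $\bigl(\frac{\mathrm{d}}{\mathrm{d}t}+\alpha_p\bigr)^k u_p=u_p$; you then conjugate by $e^{\beta_p}$ using Theorem~\ref{th2} to get $v_p^{(k)}=v_p$, and solve with the $k$-th roots of unity. Your extra care fills in points the paper leaves implicit: you work on $M_\xi$ where $\eta\owedge g\neq 0$, you check the basis count $1+\delta_k+2\lfloor\frac{k-1}{2}\rfloor=k$, and you restrict to $k\geq 1$. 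That last restriction is actually necessary, not merely a trivial case: for $k=0$ the condition is vacuous, so the displayed formula does not describe an arbitrary $\lambda$.
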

\begin{proof}
By Theorem \ref{th1}, $\left(D_t+\alpha_p(t)\right)^k u_p(t)=u_p(t).$
By Theorem \ref{th2},
$$e^{\beta_p(t)}D_t^k(e^{\beta_p(t)}u_p(t))=u_p(t).$$
We obtain $D_t^k(e^{\beta_p(t)}u_p(t))
=e^{\beta_p(t)}u_p(t).$
$$\text{Let}\quad 
v_p(t)=e^{\beta_p(t)}u_p(t).\quad\text{Then}\quad  v_p^{(k)}(t)=v_p(t).$$

Conversely, if $v_p$ satisfies this equation and if $u_p(t)=e^{-\beta_p(t)}v_p(t),$

then conjugation yields $(D_t+\alpha_p(t))^ku_p(t)=u_p(t).$

The linear differential equation with constant coefficients $v^{(k)}(t) = v(t)$
is elementary to solve.

Its characteristic equation is $r^k = 1.$

Its roots are the $k$-th roots of unity $\zeta_j=e^{\frac{2\pi i j}{k}},\qquad j= 0, \ldots, k-1.$

Since the function $v(t)$ is real-valued, the general solution is obtained by grouping the real root $\zeta_0 = 1$, the root $\zeta_{\frac{k}{2}} = -1$ (when $k$ is even), and the pairs of complex conjugate roots. Depending on the parity of $k$, the real solution is written in the form:
\begin{equation}
v(t)=C_1 e^t + \delta_k C_2 e^{-t}+\sum_{j=1}^{\lfloor \frac{k-1}{2} \rfloor} e^{\mathfrak{b}_j t} \big( A_j \cos(\mathfrak{c}_j t) + B_j \sin(\mathfrak{c}_jt) \big),
\label{eq:v-general-reelle}
\end{equation}
where $\mathfrak{b}_j = \cos\left(\frac{2\pi j}{k}\right)$, $\mathfrak{c}_j = \sin\left(\frac{2\pi j}{k}\right)$, $\delta_k = 1$ if $k$ is even and $0$ if $k$ is odd, and $C_1, C_2, A_j, B_j\in \mathbb{R}$ are constants determined by the conditions on the orbit under consideration.
\end{proof}

\begin{prop}
On a domain where $X$ does not vanish, the equation $D_X^k(\lambda)=\lambda$ locally possesses a $k$-dimensional space of smooth solutions per orbit. Furthermore, for a local coordinate system $(t, x_1, \ldots, x_{n-1})$ around a point $p\in M$, the expression for the function $\lambda$ is given by: \begin{equation}
    \lambda(t,x)=e^{-\int_{t_0}^t\Big[6\varphi(s,x)+2a\Big]ds}u(t,x)
\end{equation}
with $$u(t,x)=C_0(x)e^t
+\delta_k \varepsilon_0(x)e^{-t}+
\sum_{j=1}^{\left\lfloor\frac{k-1}{2}\right\rfloor}
e^{t\cos\left(\frac{2\pi j}{k}\right)}
\Bigg[C_j(x)\cos\left(
\sin\left(\frac{2\pi j}{k}\right)t
\right)+\varepsilon_j(x)
\sin\left(
\sin\left(\frac{2\pi j}{k}\right)t
\right)
\Bigg]$$ where $\delta_k= \begin{cases} 1,&\text{if }k\text{ is even},\\ 0,&\text{if }k\text{ is old} \end{cases}$,
$C_j$ and $\varepsilon_j$ are smooth real-valued functions depending on the variables $x=(x_1,...,x_{n-1}).$
\end{prop}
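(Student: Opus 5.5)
The plan is to straighten $X$ locally and then apply the orbitwise analysis of Theorems \ref{th1}, \ref{th2} and \ref{TH}, with the integration constants now depending smoothly on the transverse variables.

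Let $p$ be a point of a domain where $X$ does not vanish. By the flow-box (straightening) theorem there is a chart $(t,x_1,\ldots,x_{n-1})$ around $p$ in which $X=\partial_t$. In this chart the integral curves of $X$ are the lines $x=\text{const}$, parametrised by $t$. The operator becomes
\[
D_X=\partial_t+\alpha(t,x), \qquad \alpha=6\varphi+2a .
\]
On each orbit, Theorem \ref{th1} turns the equation $D_X^k(\lambda)=\lambda$ into the ordinary differential equation \eqref{eq:ODE-orbit} for $u_p(t)=\lambda(t,x)$, with $x$ frozen.

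Set
\[
\beta(t,x)=\int_{t_0}^t\bigl[6\varphi(s,x)+2a\bigr]\,ds .
\]
This function is smooth in $(t,x)$, since the integrand is smooth and $t_0$ is fixed. Theorem \ref{th2} gives the conjugation $(\partial_t+\alpha)^k=e^{-\beta}\partial_t^k e^{\beta}$. Hence $D_X^k(\lambda)=\lambda$ holds if and only if $u:=e^{\beta}\lambda$ satisfies $\partial_t^k u=u$. The converse direction is immediate: if $\partial_t^k u=u$, then $\lambda=e^{-\beta}u$ solves the original equation.

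For each fixed $x$, this is the constant-coefficient equation $v^{(k)}=v$ treated in the proof of Theorem \ref{TH}. Its real solution space has dimension exactly $k$. A basis is given by
\[
e^t,\qquad \delta_k\,e^{-t},\qquad e^{t\cos(2\pi j/k)}\cos\!\bigl(t\sin(2\pi j/k)\bigr),\qquad e^{t\cos(2\pi j/k)}\sin\!\bigl(t\sin(2\pi j/k)\bigr),
\]
with $1\le j\le\lfloor (k-1)/2\rfloor$. These are $k$ distinct exponentials $e^{\zeta t}$, $\zeta^k=1$, recombined into real functions. The count is
\[
1+\delta_k+2\left\lfloor \tfrac{k-1}{2}\right\rfloor=k .
\]
This gives the "$k$-dimensional space per orbit" assertion.

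The step I expect to be the main obstacle is showing that the coefficients $C_0,\varepsilon_0,C_j,\varepsilon_j$ are smooth in $x$. I would recover them from initial data. Let $W(t)$ be the Wronskian matrix of the basis above; it is independent of $x$ and invertible because the basis is fundamental. The coefficient vector is then
\[
W(t_0)^{-1}\bigl(u,\partial_t u,\ldots,\partial_t^{k-1}u\bigr)(t_0,x).
\]
Since $u=e^{\beta}\lambda$ is smooth, each coefficient is a linear combination of smooth functions of $x$, hence smooth.

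Conversely, any smooth choice of these functions yields a smooth $u$ with $\partial_t^k u=u$, and therefore a solution $\lambda=e^{-\beta}u$. Substituting back gives exactly the stated formula for $\lambda(t,x)$.
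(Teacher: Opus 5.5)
Your proposal is correct and follows the same route as the paper: flow-box coordinates with $X=\partial_t$, the integrating-factor conjugation $(\partial_t+\alpha)^k=e^{-\beta}\partial_t^k e^{\beta}$, and the real solution of $\partial_t^k u=u$ via the $k$-th roots of unity. The paper re-derives the conjugation by induction inside the chart, whereas you cite Theorem~\ref{th2} orbitwise. Your Wronskian argument at $t_0$ gives a more explicit justification that the coefficients $C_j,\varepsilon_j$ are smooth in $x$ than the paper's appeal to the classical theory of linear ODEs with parameters, and your count $1+\delta_k+2\lfloor (k-1)/2\rfloor=k$ makes explicit the dimension claim that the paper leaves implicit.
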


\begin{proof}
Suppose that $X_p \neq 0$. Although $X$ is an arbitrary vector field on the manifold, the vector field rectification theorem guarantees the existence of a local coordinate system $(t, x_1, \ldots, x_{n-1})$ around $p$ in which the \emph{local expression} of $X$ simplifies to:
\begin{equation}
X = \frac{\partial}{\partial t}.
\label{eq:flowbox}
\end{equation}
It is precisely through this choice of local chart (where the field $X$ is "rectified") that the problem reduces to studying the system along the trajectories. In these coordinates, the operator $D_X^k$ becomes a derivative with respect to $t$ that is, \begin{equation}\label{eqdiff}
\Big(\frac{\partial}{\partial t}+\alpha\Big)^k(\lambda)=\lambda,
\end{equation} and each fixed value of $x = (x_1, \ldots, x_{n-1})$ determines a linear differential equation of order $k$ in $t$.

Let us define
\begin{equation}\label{eq:F-definition}
\beta(t,x)
=\int_{t_0}^{t} \alpha(s,x)\,ds.
\end{equation}
Then we have $\frac{\partial \beta(t,x)}{\partial t}=\alpha(t,x).$

Now let $\lambda(t,x)=e^{-\beta(t,x)}u(t,x)$,
where $u$ is a function to be determined.

We obtain
\begin{align*}
\left(
\frac{\partial}{\partial t}+\alpha(t,x)
\right)(\lambda(t,x))&=
\frac{\partial}{\partial t}
\left(e^{-\beta(t,x)}u\right)
+
\alpha(t,x) e^{-\beta(t,x)}u(t,x)\\
&=
-\beta_t(t,x) e^{-\beta(t,x)}u(t,x)
+
e^{-\beta(t,x)}u_t(t,x)
+
\alpha(t,x) e^{-\beta(t,x)}u(t,x).
\end{align*}
Since $\beta_t(t,x)=\alpha(t,x)$, the two terms containing $\alpha$ cancel each other out, and consequently, $$\left(
\frac{\partial}{\partial t}+\alpha(t,x)
\right)(\lambda(t,x))
=
e^{-\beta(t,x)}\frac{\partial u(t,x)}{\partial t}$$ thus 
\begin{equation}\label{eqd}
\left(
\frac{\partial}{\partial t}+\alpha(t,x)
\right)(\lambda(t,x))
=
e^{-\beta(t,x)}\frac{\partial \Big(e^{-\beta(t,x)}\lambda(t,x)\Big)}{\partial t}.
\end{equation}
We therefore have: \begin{align*}
    \left(
\frac{\partial}{\partial t}+\alpha(t,x)
\right)^2(\lambda(t,x))&=\left(
\frac{\partial}{\partial t}+\alpha(t,x)
\right)\left(\left(
\frac{\partial}{\partial t}+\alpha(t,x)
\right)(\lambda(t,x))\right)\\
&=\frac{\partial}{\partial t}\left(e^{-\beta(t,x)}\frac{\partial \Big(e^{-\beta(t,x)}\lambda(t,x)\Big)}{\partial t}\right)+\alpha(t,x)\left(e^{-\beta(t,x)}\frac{\partial \Big(e^{-\beta(t,x)}\lambda(t,x)\Big)}{\partial t}\right)\\
&=-\beta_t(t,x)e^{-\beta(t,x)}\frac{\partial \Big(e^{-\beta(t,x)}\lambda(t,x)\Big)}{\partial t}+e^{-\beta(t,x)}\frac{\partial^2 \Big(e^{-\beta(t,x)}\lambda(t,x)\Big)}{\partial t^2}\\
&+\alpha(t,x)e^\beta(t,x)\frac{\partial \Big(e^{-\beta(t,x)}\lambda(t,x)\Big)}{\partial t}
\\
&=-\alpha(t,x)e^{-\beta(t,x)}\frac{\partial \Big(e^{-\beta(t,x)}\lambda(t,x)\Big)}{\partial t}+e^{-\beta(t,x)}\frac{\partial^2 \Big(e^{-\beta(t,x)}\lambda(t,x)\Big)}{\partial t^2}\\
&+\alpha(t,x)e^\beta(t,x)\frac{\partial \Big(e^{-\beta(t,x)}\lambda(t,x)\Big)}{\partial t}=e^{-\beta(t,x)}\frac{\partial^2 \Big(e^{-\beta(t,x)}\lambda(t,x)\Big)}{\partial t^2}.
\end{align*}
Suppose that for every natural number $m \ge 0$, we have: $$\left(
\frac{\partial}{\partial t}+\alpha(t,x)
\right)^m(\lambda(t,x))=e^{-\beta(t,x)}\frac{\partial^m \Big(e^{-\beta(t,x)}\lambda(t,x)\Big)}{\partial t^m}$$

We therefore have:   \begin{align*}
    \left(
\frac{\partial}{\partial t}+\alpha(t,x)
\right)^{m+1}(\lambda(t,x))&=\left(
\frac{\partial}{\partial t}+\alpha(t,x)
\right)\left(\left(
\frac{\partial}{\partial t}+\alpha(t,x)
\right)^m(\lambda(t,x))\right)\\
&=\frac{\partial}{\partial t}\left(e^{-\beta(t,x)}\frac{\partial^m \Big(e^{-\beta(t,x)}\lambda(t,x)\Big)}{\partial t^m}\right)+\alpha(t,x)\left(e^{-\beta(t,x)}\frac{\partial^m \Big(e^{-\beta(t,x)}\lambda(t,x)\Big)}{\partial t^m}\right)\\
&=-\beta_t(t,x)e^{-\beta(t,x)}\frac{\partial^m \Big(e^{-\beta(t,x)}\lambda(t,x)\Big)}{\partial t^m}+e^{-\beta(t,x)}\frac{\partial^{m+1} \Big(e^{-\beta(t,x)}\lambda(t,x)\Big)}{\partial t^{m+1}}\\
&+\alpha(t,x)e^{-\beta(t,x)}\frac{\partial^m \Big(e^{-\beta(t,x)}\lambda(t,x)\Big)}{\partial t^m}
\\
&=-\alpha(t,x)e^{-\beta(t,x)}\frac{\partial^m \Big(e^{-\beta(t,x)}\lambda(t,x)\Big)}{\partial t^m}+e^{-\beta(t,x)}\frac{\partial^{m+1} \Big(e^{-\beta(t,x)}\lambda(t,x)\Big)}{\partial t{m+1}}\\
&+\alpha(t,x)e^\beta(t,x)\frac{\partial^m \Big(e^{-\beta(t,x)}\lambda(t,x)\Big)}{\partial t^m}=e^{-\beta(t,x)}\frac{\partial^{m+1} \Big(e^{-\beta(t,x)}\lambda(t,x)\Big)}{\partial t^{m+1}}.
\end{align*} In particular, for a fixed natural number $k$, we have: $$\left(
\frac{\partial}{\partial t}+\alpha(t,x)
\right)^k(\lambda(t,x))=e^{-\beta(t,x)}\frac{\partial^k \Big(e^{-\beta(t,x)}\lambda(t,x)\Big)}{\partial t^k}.$$
Thus, the equation \eqref{eqdiff} is equivalent to $$e^{-\beta(t,x)}\frac{\partial^k \Big(e^{-\beta(t,x)}\lambda(t,x)\Big)}{\partial t^k}=\lambda(t,x)$$ which directly yields $$e^{-\beta(t,x)}\frac{\partial^k \Big(u(t,x)\Big)}{\partial t^k}=e^{-\beta(t,x)}u(t,x)$$
Given that $e^{-\beta(t,x)} \neq 0$, we obtain the equation
\begin{equation}\label{eq:u-equation}
\frac{\partial^k u(t,x)}{\partial t^k}=u(t,x).
\end{equation}

We see that for each fixed $x=(x_1,\ldots,x_{n-1})$, the equation
\eqref{eq:u-equation} is a homogeneous linear differential equation
of order $k$.

The general real solution is given by:  $$u(t,x)=C_0(x)e^t
+\delta_k \varepsilon_0(x)e^{-t}+
\sum_{j=1}^{\left\lfloor\frac{k-1}{2}\right\rfloor}
e^{t\cos\left(\frac{2\pi j}{k}\right)}
\Bigg[C_j(x)\cos\left(
\sin\left(\frac{2\pi j}{k}\right)t
\right)+\varepsilon_j(x)
\sin\left(
\sin\left(\frac{2\pi j}{k}\right)t
\right)
\Bigg]$$
where $\delta_k= \begin{cases} 1,&\text{if }k\text{ is even},\\ 0,&\text{if }k\text{ is odd} \end{cases}$,
and $C_j$ and $\varepsilon_j$ are smooth real-valued functions of $x=(x_1,...,x_{n-1}).$

The $k$ constants of integration for this ODE may vary smoothly with $y$. If the transverse data are smooth, the resulting global dependence on $(t,y)$ is smooth, by the classical theory of linear differential equations with parameters.
\end{proof}
\begin{thm}
    Let $p \in M$ be a fixed point of the vector field $X$. For a fixed integer $k \geq 1$, we have $\mathcal{L}^k_X R = R$ if and only if, depending on whether $k$ is even or odd:
    \begin{itemize}[label=$\bullet$]
        \item If $k$ is even: $\varphi(p) = \frac{1-2a }{6}$ or $\varphi(p) = \frac{-1-2a }{6}$ or $\lambda(p) = 0$;
        \item If $k$ is odd: $\varphi(p) = \frac{1-2a }{6}$ or $\lambda(p) = 0$.
    \end{itemize}  
\end{thm}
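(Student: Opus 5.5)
The plan is to reduce the tensorial identity at the point $p$ to a scalar identity, using the fact that $X$ annihilates every function at a zero of $X$. By the proposition expressing $\mathcal{L}_X^mR=q_m\,\eta\owedge g$ together with the identification $q_m=D_X^m(\lambda)$, the relation $\mathcal{L}_X^kR=R$ at $p$ reads
\[
\bigl(D_X^k(\lambda)(p)-\lambda(p)\bigr)\,(\eta\owedge g)_p=0 .
\]
I will interpret the statement pointwise, i.e. as this relation evaluated at the fixed point $p$. Wherever $(\eta\owedge g)_p\neq0$ (that is, $\xi_p\neq0$ and $n\geq 2$), this is equivalent to the scalar condition $D_X^k(\lambda)(p)=\lambda(p)$. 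The degenerate case $\xi_p=0$ should be checked separately.

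The key step is the computation of $D_X^m(f)(p)$ for an arbitrary smooth $f$ when $X_p=0$. For any smooth $h$ we have $X(h)(p)=\mathrm{d}h_p(X_p)=0$. Hence
\[
D_X(h)(p)=X(h)(p)+\alpha(p)h(p)=\alpha(p)h(p).
\]
Applying this with $h=D_X^{m-1}(f)$ and inducting on $m$ gives
\[
D_X^m(f)(p)=\alpha(p)^m f(p), \qquad \alpha(p)=2a+6\varphi(p).
\]
Equivalently, along the constant orbit $\Phi_t(p)=p$ one has $u_p\equiv\lambda(p)$ and $\alpha_p\equiv\alpha(p)$, so Theorem \ref{th1} gives the same equation directly. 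Either way, the condition becomes
\[
\bigl(\alpha(p)^k-1\bigr)\lambda(p)=0 .
\]
So either $\lambda(p)=0$, or $\alpha(p)$ is a real $k$-th root of unity.

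Since $a\in\mathbb{R}$ and $\varphi$ is real-valued, $\alpha(p)$ is real. The real solutions of $r^k=1$ are $r=1$ when $k$ is odd, and $r=\pm1$ when $k$ is even. Solving $2a+6\varphi(p)=1$ gives $\varphi(p)=\frac{1-2a}{6}$, and solving $2a+6\varphi(p)=-1$ gives $\varphi(p)=\frac{-1-2a}{6}$. This yields exactly the two cases of the statement. Every step is reversible, so the converse holds as well.

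The only genuine subtlety, which I expect to be the main point to handle carefully, is the interpretation of ``$\mathcal{L}_X^kR=R$'' at a single point. The global relation cannot be decided at $p$ alone, so the theorem has to be read as the pointwise identity $(\mathcal{L}_X^kR)_p=R_p$. One must also make sure that $X(\cdot)(p)=0$ is applied only after the iterated functions $D_X^{m-1}(f)$ have been formed globally, and not by first restricting $f$ to $p$. If $\xi_p=0$, then both sides vanish at $p$ trivially, and this case is absorbed into the condition $\lambda(p)=0$ only in the weaker sense that the relation holds. I will note this as a caveat rather than as part of the equivalence.
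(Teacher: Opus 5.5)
Your argument is correct and is essentially the paper's proof. You reduce to $D_X^k(\lambda)=\lambda$, show by induction that $D_X^m(\lambda)(p)=\alpha(p)^m\lambda(p)$ because $X$ annihilates every function at its zero, and keep only the real $k$-th roots of unity ($1$, and also $-1$ when $k$ is even) for $\alpha(p)=2a+6\varphi(p)$; your pointwise reading of $\mathcal{L}_X^kR=R$ at $p$ (with $\xi_p\neq0$) is what makes the stated ``if and only if'' legitimate, since the paper derives only the forward implication from the global identity and merely asserts the converse, which cannot follow from data at a single point.
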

\begin{proof}
$p$ being a fixed point of the vector field $X$ means that $X_p=0$, and the condition $\mathcal{L}^k_XR=R$ is equivalent to $D^k_X(\lambda)=\lambda$. Let us prove by induction that for all $m \geq 1$, $D_X^m(\lambda)(p) = \alpha(p)^m \lambda(p).$

\begin{itemize}[label=$\bullet$]
    \item For $m = 1$:

By definition, $D_X(\lambda)(p) = (X(\lambda) + \alpha \lambda)(p) = X(\lambda)(p) + \alpha(p)\lambda(p).$

Since $p$ is a fixed point of $X$ ($X_p = 0$), we have $X(\lambda)(p) = d\lambda_p(X_p) = 0$. Thus, we are left with:
$$D_X^1(\lambda)(p) = \alpha(p)^1 \lambda(p),$$
which establishes the base case for the induction.

\item Assume the property holds for a given order $m \ge 1$.

Let us show that the property holds for the order $m+1$. By the definition of an operator power:
$$D_X^{m+1}(\lambda) =D_X\left(D_X^m(\lambda) \right) = (X + \alpha)\left( D_X^m(\lambda) \right) =X\left(D_X^m(\lambda) \right)+\alpha D_X^m(\lambda).$$
Let us evaluate this expression at the point $p$:
$$D_X^{m+1}(\lambda)(p) = X\left( D_X^m(\lambda) \right)(p) + \alpha(p) \left( D_X^m(\lambda) \right)(p).$$
On one hand, since $p$ is a fixed point of $X$, applying the vector field $X$ to any function evaluated at $p$ yields zero (by the Leibniz rule or via the differential $dX_p$). Thus, for the function $ D_X^m(\lambda)$, we have: $X\left( D_X^m(\lambda) \right)(p) = 0.$
On the other hand, applying the induction hypothesis to the second term, $\left( D_X^m(\lambda) \right)(p) = \alpha(p)^m\lambda(p)$, we obtain:
$$D_X^{m+1}(\lambda)(p) = 0 + \alpha(p).\left( \alpha(p)^m \lambda(p) \right) = \alpha(p)^{m+1} \lambda(p).$$
The property is thus proved by induction for all integers $m \geq 1$. By replacing $\alpha(p)$ by $6\varphi(p) + 2a$, we obtain: $$D_X^m(\lambda)(p) = \Big(6\varphi(p) + 2a\Big)^m \lambda(p).$$

In particular $$D_X^k(\lambda)(p) = \Big(6\varphi(p) + 2a\Big)^k \lambda(p).$$
If now $\mathcal{L}_X^kR=R$ i.e. $D_X^k(\lambda) = \lambda$, then by evaluating in $p$, we have $D_X^k(\lambda(p)=\lambda(p)$, that is: $$\Big(6\varphi(p) +2 a\Big)^k \lambda(p) = \lambda(p) \iff \left(\Big(6\varphi(p) + 2a\Big)^k - 1\right)\lambda(p) = 0.$$
Therefore, $\Big(6\varphi(p) + 2a\Big)^k - 1=0$ or $\lambda(p) = 0$.
Like $6\varphi(p) + 2a\in \mathbb{R}$ so $$\forall k \in \mathbb{N}^*, \quad \mathcal{L}^k_X R = R \iff  \begin{cases} \varphi(p) = \frac{1-2a }{6}\lor \varphi(p) = \frac{-1-2a }{6} \lor \lambda(p) = 0 & \text{if } 2 \mid k \\ 
\varphi(p) = \frac{1-2a }{6} \lor \lambda(p) = 0 & \text{if } 2 \nmid k \end{cases}$$
\end{itemize}
\end{proof}
\begin{rem}
The relation $\mathcal{L}_X^m R = f \, \mathcal{L}_X^{m-1} R$, where $f$ is a fixed scalar function, expresses the fact that successive Lie derivatives of the curvature tensor are proportional from one rank to the next. Imposing this condition at a specific order $k$ simply means that the tensor $\mathcal{L}_X^{k-1} R$ is an eigenvector of the operator $\mathcal{L}_X$ associated with the eigenvalue $f$. This is a property localized at that specific rank, which by no means implies a uniform or iterative recurrence at lower orders.
\end{rem}
\begin{thm}
  \begin{enumerate}
      \item Let $k$ be a fixed non-negative integer. The function $\lambda$ defined such that for every integer $j \in \{0, \dots, k\}$, $\mathcal{L}_X^{j+1}R=f\mathcal{L}_X^jR$ (where $f$ is a function continuous on $M$ along an orbit $\gamma_p$) and the function $s \mapsto v_p(s)=\exp\left(\int_{t_0}^{s} [6\varphi(\Phi_\tau(p))+2a]\mathrm{d} \tau \right)\lambda(\Phi_s(p))$ is well-defined with $v_p^{(j)}(t_0)=c_j$ for $0 \leq j \leq k$ is unique and given by: $$\lambda(\Phi_t(p))
=\exp\left(-\int_{t_0}^{t}
[6\varphi(\Phi_s(p))+2a]\mathrm{d} s
\right)\Big(\displaystyle\sum_{j=0}^{k}
\frac{c_j}{j!}(t-t_0)^j
+\frac{1}{k!}
\int_{t_0}^t
(t-s)^k
\Big[f(\Phi_s(p))\Big]^{k+1}
v_p(s)\mathrm{d}s.\Big)$$ 
 \item  The function $\lambda$, for a fixed non-negative integer $k$ such that $\mathcal{L}_X^{k+1}R=f\mathcal{L}_X^kR$ along an orbit $\gamma_p$ where $f$ is a continuous function is given by: \begin{align*}
        \lambda(\Phi_t(p))
&=\Big(\underbrace{\int_{t_0}^t \int_{t_0}^{s_1} \dots \int_{t_0}^{s_{k-1}}}_{k \text{ integrals}} \left[ C_0 \exp\left( \int_{t_0}^{s_{k}} \int_{t_0}^{s_{k}} f(\Phi_s(p))\mathrm{ds} \right) \right] \mathrm{d}s_{k} \dots \mathrm{d}s_1 + \sum_{j=0}^{k-1} C_j \frac{(t-t_0)^j}{j!}\Big)\times\\
&\exp\left(
-\int_{t_0}^{t}
[6\varphi(\Phi_s(p))+2a]\mathrm{d} s
\right)
 \end{align*} where $C_0\in\mathbb{R}$ and $C_j = v_p^{(j)}(t_0)$ for $j = 0, 1, \dots, k-1$ represent the initial values of the successive derivatives.
  \end{enumerate}
\end{thm}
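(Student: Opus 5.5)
Both items reduce to scalar ODEs along the orbit, using the machinery already set up. By the proposition giving $\mathcal{L}_X^mR=q_m\,\eta\owedge g$ with $q_m=D_X^m(\lambda)$, and the injectivity of $\zeta\mapsto\zeta\owedge g$ on the relevant tensors (used in Proposition \ref{P1}; on $M_\xi$ it suffices that $\eta\owedge g\neq0$), the relation $\mathcal{L}_X^{j+1}R=f\mathcal{L}_X^jR$ is equivalent to $D_X^{j+1}(\lambda)=f\,D_X^j(\lambda)$. Restricted to the orbit $\gamma_p$, Theorem \ref{th1} turns this into $(D_t+\alpha_p)^{j+1}u_p=f_p(D_t+\alpha_p)^ju_p$, where $u_p(t)=\lambda(\Phi_t(p))$ and $f_p(t)=f(\Phi_t(p))$. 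The conjugation identity of Theorem \ref{th2} then gives
\[
v_p^{(j+1)}=f_p\,v_p^{(j)},\qquad v_p=e^{\beta_p}u_p .
\]
Every formula for $\lambda$ is recovered at the end as $\lambda(\Phi_t(p))=e^{-\beta_p(t)}v_p(t)$, which produces the prefactor $\exp\bigl(-\int_{t_0}^t[6\varphi+2a]\bigr)$.

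For item 1, I impose the relations for all $j\in\{0,\dots,k\}$ and chain them. An easy induction on $j$ gives $v_p^{(j)}=f_p^{\,j}v_p$, at least where $f$ behaves as a constant along the orbit; otherwise derivatives of $f_p$ enter and must be tracked. Taking $j=k$ yields $v_p^{(k+1)}=f_p^{k+1}v_p$. Taylor's formula with integral remainder at $t_0$, with data $v_p^{(j)}(t_0)=c_j$, then reads
\[
v_p(t)=\sum_{j=0}^k\frac{c_j}{j!}(t-t_0)^j+\frac1{k!}\int_{t_0}^t(t-s)^k\,f_p(s)^{k+1}v_p(s)\,ds,
\]
which is exactly the stated formula once multiplied by $e^{-\beta_p}$. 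For uniqueness, this is a linear Volterra integral equation with a continuous kernel (here continuity of $f$ is used). Picard iteration, or a Gronwall estimate on the difference of two solutions, shows that it has a unique continuous solution on the orbit interval for the prescribed $c_j$.

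For item 2, only the top relation $v_p^{(k+1)}=f_p\,v_p^{(k)}$ is imposed. Setting $w=v_p^{(k)}$ gives the first-order linear equation $w'=f_pw$, so $w(s)=C_0\exp\bigl(\int_{t_0}^sf_p\bigr)$. Then I integrate $k$ times from $t_0$, using iterated integrals with the Taylor polynomial $\sum_{j<k}C_j(t-t_0)^j/j!$ and $C_j=v_p^{(j)}(t_0)$. This gives the stated expression, reading the inner exponent as a single integral $\int_{t_0}^{s_k}f(\Phi_s(p))\,ds$.

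The main obstacle is the chaining step in item 1. For non-constant $f$, iterating $v^{(j+1)}=f v^{(j)}$ does not literally give $v^{(k+1)}=f^{k+1}v$, since differentiating $f^jv$ produces terms in $f'$. I would first try to justify the collapse by combining the relations at consecutive ranks: for $j\ge1$, the relation at rank $j$ together with the derivative of the relation at rank $j-1$ forces $f_p'v_p^{(j-1)}=0$. Hence on the open set where $v_p\neq0$ (and by continuity its closure), $f$ is constant along the orbit, and there the identity $v^{(k+1)}=f^{k+1}v$ is exact. In the complementary case $v_p\equiv0$ on an interval, the formula holds trivially with all $c_j=0$. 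The remaining steps are routine.
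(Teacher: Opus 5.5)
Your proposal is correct and follows the paper's own route. You reduce via $\mathcal{L}_X^mR=D_X^m(\lambda)\,\eta\owedge g$ and the conjugation identity to $v_p^{(j+1)}=f_p\,v_p^{(j)}$. For item~1 you chain these to $v_p^{(k+1)}=f_p^{k+1}v_p$ and apply Taylor's formula with integral remainder plus uniqueness for the linear Cauchy problem. For item~2 you solve $w'=f_pw$ for $w=v_p^{(k)}$ and integrate $k$ times.

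The obstacle you flag in item~1 is not real, so that detour can be dropped. The induction $v_p^{(j+1)}=f_p\,v_p^{(j)}=f_p\cdot f_p^{\,j}v_p$ uses the rank-$j$ relation directly and never differentiates $f_p$; this is exactly the paper's chain $v_p''=f_pv_p'=f_p^2v_p$. Your workaround would also need $f_p$ to be differentiable, which for a merely continuous $f$ is only guaranteed where $v_p\neq0$, via $f_p=v_p'/v_p$.
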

\begin{proof}
    \begin{enumerate}
        \item The fact that $\mathcal{L}_X^{j+1} R = f \, \mathcal{L}_X^j R$ for every natural number $j\in\{1,..,k\}$ is equivalent to $D_X^{j+1}(\lambda)=fD_X^j(\lambda)$. Consequently, along an orbit $\gamma(t)$, we obtain: $$e^{-\beta_p(t)}
\frac{\mathrm{d}^{j+1}}{\mathrm{d}t^{j+1}}
e^{\beta_p(t)}=f(\Phi_t(p))e^{-\beta_p(t)}
\frac{\mathrm{d}^j}{\mathrm{d}t^j}
e^{\beta_p(t)}$$ so for any function $u$ of the variable $t$, we have: $$
\frac{\mathrm{d}^{j+1}}{\mathrm{d}t^{j+1}}
\Big(e^{\beta_p(t)}u(t)\Big)=f(\Phi_t(p))\frac{\mathrm{d}^j}{\mathrm{d}t^j}\Big(e^{\beta_p(t)}u(t)\Big)$$. If we set $v_p(t)=e^{\beta_p(t)}u(t)$, then we again obtain\begin{equation}\label{z}
    v^{(j+1)}_p(t)=f(\Phi_t(p))v^{(j)}_p(t)
\end{equation}
This means that $$v'_p(t)=f(\Phi_t(p))v_p(t),\quad v''_p(t)=f(\Phi_t(p))v'_p(t)=\big[f(\Phi_t(p))\Big]^2v_p(t),.....,v^{(k+1)}_p(t)=\Big[f(\Phi_t(p))\Big]^{k+1}v_p(t).$$ By setting the initial conditions $$v_p(t_0)=c_0,\qquad
v_p'(t_0)=c_1,\qquad
\ldots,\qquad
v_p^{(k)}(t_0)=c_k,$$
The solution is the unique solution to the Cauchy problem.
$$
\begin{cases}
v_p^{(k+1)}(t)
=
\big[f(\Phi_t(p))\big]^{k+1}v_p(t),
\\
v_p^{(j)}(t_0)=c_j,
\qquad 0\leq j\leq k.
\end{cases}
$$

 We therefore obtain: $$v_p(t)=\displaystyle\sum_{j=0}^{k}\frac{c_j}{j!}(t-t_0)^j+\frac{1}{k!}\int_{t_0}^t
(t-s)^k\big[f(\Phi_s(p))\big]^{k+1}
v_p(s)\mathrm{d}s.$$
\item If we fix $k$, equation \eqref{z} becomes $v^{(k+1)}_p(t)=f(\Phi_t(p))v^{(k)}_p(t)$.

By setting $y(t) = v_p(t)$ and $\mathfrak{a}(t) = f(\Phi_t(p))$, the equation takes the form: $$y^{(k+1)}(t) = \mathfrak{a}(t) y^{(k)}(t).$$
Let $z(t) = y^{(k)}(t)$; the equation reduces to a first-order linear equation: $z'(t) = \mathfrak{a}(t) z(t)$.

The solution to this equation is obtained immediately:
$$z(t) = C_0 \exp\left( \int_{t_0}^t \mathfrak{a}(s) \, ds \right)$$
where $C_0 = z(t_0) = v_p^{(k)}(t_0)$ is a constant of integration.

Since $y^{(k)}(t) = z(t)$, the general solution $v_p(t)$ is obtained through $k$ successive integrations, combined with the integration polynomial associated with the initial conditions:
$$v_p(t) = \underbrace{\int_{t_0}^t \int_{t_0}^{s_1} \dots \int_{t_0}^{s_{k-1}}}_{k \text{ integrals}} \left[ C_0 \exp\left( \int_{t_0}^{s_{k}} f(\Phi_s(p))\mathrm{ds}\right) \right] \mathrm{d}s_{k} \dots \mathrm{d}s_1 + \sum_{j=0}^{k-1} C_j \frac{(t-t_0)^j}{j!}$$
where $C_j = v_p^{(j)}(t_0)$ for $j = 0, 1, \dots, k-1$ represent the initial values of the successive derivatives.
    \end{enumerate}
\end{proof}
\section{Conclusion}

In this work, we conducted an in-depth study of the geometry of Riemannian manifolds satisfying the rank-one anisotropic curvature condition $R = \lambda (\xi^\flat\otimes\xi^\flat)\owedge g$. After establishing that in dimension $n\ge 3$, such a manifold admits a Ricci almost-soliton structure whose characteristic vector field is not conformal, we showed that in dimension $2$, this field satisfies the equation $\mathcal{L}_X g = 2(\mu - \lambda\|\xi\|)g$. Furthermore, we proved that when the vector field of a Ricci soliton constitutes a symmetry of order $k$ for the Ricci tensor ($\mathcal{L}_X^k\operatorname{Ric}=0$), the geometric study reduces to solving a system of PDEs along the flow trajectories.

Moreover, we demonstrated that when $X$ is a conformal vector field whose Lie bracket with $\xi$ is proportional to $\xi$, the geometric problem reduces to a scalar differential problem. More precisely, for any fixed natural number $k$, there exists an operator $D = X + \alpha$ such that $\mathcal{L}_X^k R = D^k(\lambda)\eta\owedge g$. This formulation allows for an explicit analysis of the behavior of the function $\lambda$ along the integral curves of $X$, particularly under the following invariance and stationarity conditions:
\begin{itemize}[label=$\bullet$]
    \item $\mathcal{L}_X^k R = R$ ;
    \item for a continuous function $f$ on $M$, \begin{itemize}[label=$\star$]
         \item $\mathcal{L}_X^{j+1} R =f \mathcal{L}_X^j R$ for all $j \in \{0, \dots, k\}$ ;
    \item $\mathcal{L}_X^{k+1} R = f\mathcal{L}_X^k R$. 
    \end{itemize}
\end{itemize}

\end{document}